\documentclass[11pt]{article}
\usepackage{mathrsfs}
\usepackage{amsfonts}
\usepackage{amssymb,amsmath,amsthm}
\newcommand{\R}{\mathbb{R}}

\newcommand{\N}{\mathbb{N}}
\def\qed{\hfill $\Box$ \smallskip}
\textheight 23cm \textwidth 16cm

\oddsidemargin 0pt \evensidemargin 0pt \topmargin 0pt

\baselineskip 8pt

\newtheorem{theorem}{Theorem}[section]

\newtheorem{lemma}[theorem]{Lemma}
\newtheorem{proposition}{Proposition}

\theoremstyle{definition}
\newtheorem{definition}[theorem]{Definition}
\newtheorem{remark}{Remark}

\begin{document}
\title{\LARGE\bf{Normalized ground states solutions for nonautonomous Choquard equations}$\thanks{{\small This work was partially supported by NSFC(11901532,11901531).}}$ }
\date{}
 \author{ Huxiao Luo, Lushun Wang$\thanks{{\small Corresponding author. E-mail: luohuxiao@zjnu.edu.cn (H. Luo), lushun@zjnu.edu.cn (L. Wang).}}$\\
\small Department of Mathematics, Zhejiang Normal University, Jinhua, Zhejiang, 321004, P. R. China
}
\maketitle
\begin{center}
\begin{minipage}{13cm}
\par
\small  {\bf Abstract:} In this paper, we study normalized ground state solutions for the following nonautonomous Choquard equation:
\[
\left\{
\begin{array}{ll}
\aligned
&-\Delta u-\lambda u=\left(\frac{1}{|x|^{\mu}}\ast A|u|^{p}\right)A|u|^{p-2}u,\\
&\int_{\mathbb{R}^{N}}|u|^{2}dx=c,\quad u\in H^1(\mathbb{R}^N,\R),
\endaligned
\end{array}
\right.
\]
where $c>0$, $0<\mu<N$, $\lambda\in\R$, $A\in C^1(\R^N,\R)$. For $p\in(2_{*,\mu}, \bar{p})$, we prove that the Choquard equation possesses ground state normalized solutions, and the set of ground states is orbitally stable.
 For $p\in (\bar{p},2^*_\mu)$, we find a normalized solution, which is not a global minimizer. $2^*_\mu$ and $2_{*,\mu}$ are the upper and lower critical exponents due to the Hardy-Littlewood-Sobolev inequality, respectively. $\bar{p}$ is $L^2-$critical exponent.
Our results generalize and extend some related results.
 \vskip2mm
 \par
 {\bf Keywords:} Nonautonomous Choquard equation; Variational methods; Normalized solution; Orbitally stable.

 \vskip2mm
 \par
 {\bf MSC(2010): }35J50; 58E30

\end{minipage}
\end{center}

 {\section{Introduction}}
 \setcounter{equation}{0}
Consider the time dependent nonautonomous Choquard equation
\begin{equation}\label{t1.1.0}
\left\{
\begin{array}{ll}
\aligned
&i\partial_t\psi=-\Delta \psi-\left(\frac{1}{|x|^\mu}\ast A|\psi|^{p}\right)A|\psi|^{p-2}\psi,\quad t\in\R,~ x\in\R^N,\\
&\psi(0,x)=\psi_0(x)\in H^1(\R^N,\mathbb{C}),
\endaligned
\end{array}
\right.
\end{equation}
where $N\in\mathbb{N}$ denotes space dimension, $0<\mu<N$, $A\in L^\infty(\R^N,\R)$, $p\in(2_{*,\mu},2^*_\mu)$, where
\begin{equation*}
\left\{
\begin{array}{ll}
\aligned
&2_{*,\mu}:=\frac{2N-\mu}{N}, \\
& 2^*_\mu:=\frac{2N-\mu}{(N-2)_+}=
\left\{
\begin{array}{ll}
\aligned
&\frac{2N-\mu}{N-2}~\text{if}~N\geq 3,\\
&+\infty~\text{if}~N=1,2.
\endaligned
\end{array}
\right.
\endaligned
\end{array}
\right.
\end{equation*}

Equation $(\ref{t1.1.0})$ has several physical origins. In particular, when $N = 3$, $p = 2$, $\mu = 1$ and $A(x)\equiv 1$, (\ref{t1.1.0}) appeared at least as early as in 1954, in a work by S. I. Pekar \cite{MR2561169,pekar} describing the quantum mechanics of a polaron at rest. In 1976, P. Choquard \cite{MR471785} used $(\ref{t1.1.0})$ to describe an electron trapped in its own hole, in a certain approximation to Hartree-Fock theory of one component plasma.
Twenty years later, R. Penrose proposed $(\ref{t1.1.0})$ as a model of self-gravitating matter, in a programme in
which quantum state reduction is understood as a gravitational phenomenon, see \cite{MR1649671}.

For our setting, $A(x)$ is a real-valued bounded function and not necessarily a constant function. However, according to \cite{cazenave,feng}, by testing equation (\ref{t1.1.0}) against $\bar{\psi}$ (the complex conjugate of $\psi$) and $\partial_t\bar{\psi}$, it is easy to obtain the conservation property of mass $\int_{\R^N}|\psi|^2dx$ and of energy
$$\frac{1}{2}\int_{\R^N}|\nabla \psi|^2dx-\frac{1}{2p}\int_{\R^N}(|x|^{-\mu}\ast A|\psi|^p)A|\psi|^pdx.$$
And similar to \cite[Theorem 1.1]{feng}, for $0<\mu<\min\{N,4\}$ and $2\leq p<2^*_\mu$, we have from Hardy-Littlewood-Sobolev inequality and H\"{o}lder inequality that
$$\|(|x|^{-\mu}\ast A|u|^p)A|u|^{p-2}u -(|x|^{-\mu}\ast A|v|^p)A|v|^{p-2}v\|_{\frac{2Np}{2Np-2N+\mu}}
\leq C(\| u\|^{2p-2}_{\frac{2Np}{2N-\mu}} + \|v\|^{2p-2}_{\frac{2Np}{2N-\mu}} )\|u-v\|_{\frac{2Np}{2N-\mu}},$$
where $\|\cdot\|_{q}$ denotes the usual norm of the Lebesgue space $L^{q}(\mathbb{R}^N,\R)$.
Then by Strichartz estimate and fixed point argument \cite[Theorem 3.3.9]{cazenave}, (\ref{t1.1.0}) is local well-posedness in $H^1(\R^N,\mathbb{C})$. Moreover, if $2\leq p<\bar{p}$, it is standard to get global existence by the conservation of mass and energy and Gagliardo-Nirenberg inequality of convolutional type, see \cite[Theorem 1.2]{feng}.
Here, $\bar{p}:=\frac{2N-\mu+2}{N}$ denotes the $L^2$-critical (mass-critical) exponent.

In general, equation (\ref{t1.1.0}) admits special regular solutions, which are called solitary (standing)
waves. More precisely, these solutions have the form $\psi(t, x) = e^{-i\lambda t}u(x)$, where $-\lambda\in\R$
is the frequency and $u(x)$ solves the following elliptic equation
\begin{equation}\label{1.1.0}
\left\{
\begin{array}{ll}
\aligned
&-\Delta u-\lambda u=\left(\frac{1}{|x|^{\mu}}\ast A|u|^{p}\right)A|u|^{p-2}u \quad \text{in}~\R^N, \\
&\int_{\mathbb{R}^{N}}|u|^{2}dx=\int_{\R^N}|\psi_0|^2dx:=c,\\
&u\in H^1(\mathbb{R}^N,\R).
\endaligned
\end{array}
\right.   \tag{P}
\end{equation}
Here the constraint $\int_{\mathbb{R}^{N}}|u|^{2}dx=c$ is natural due to the conservation of mass.

To study (\ref{1.1.0}) variationally, we need to recall the following Hardy--Littlewood--Sobolev inequality \cite[Theorem 4.3]{MR1817225}.
\begin{lemma}
 (Hardy--Littlewood--Sobolev inequality.)  Let $s,r>1$ and $0<\mu<N$ with $1/s+\mu/N+1/r=2$, $f\in L^{s}(\R^N,\R)$ and $h\in L^{r}(\R^N,\R)$. There exists a sharp constant $C(N,\mu,s,r)$, independent of $f,h$, such that
\begin{equation}\label{HLS1}
\int_{\mathbb{R}^{N}}\int_{\mathbb{R}^{N}}\frac{f(x)h(y)}{|x-y|^{\mu}}dxdy\leq C(N,\mu,s,r) \|f\|_{s}\|h\|_{r}.
\end{equation}
\end{lemma}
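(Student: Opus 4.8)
The statement asserts only the existence of a finite (indeed sharp) constant, so the plan is to prove boundedness with \emph{some} finite $C(N,\mu,s,r)$; pinning down the sharp value is a separate, harder matter that I address at the end. First I would pass to the dual formulation. Writing $I_\mu f(y):=\int_{\R^N}|x-y|^{-\mu}f(x)\,dx=(|\cdot|^{-\mu}\ast f)(y)$, the left-hand side of \x{HLS1} equals $\int_{\R^N}h(y)\,I_\mu f(y)\,dy$, so by Hölder's inequality with the conjugate pair $(r,r')$ it is at most $\|h\|_r\,\|I_\mu f\|_{r'}$. Hence it suffices to prove the mapping estimate
\[
\|I_\mu f\|_{r'}\le C\,\|f\|_s,
\]
and the scaling constraint $1/s+\mu/N+1/r=2$ is exactly what makes this consistent, since it rewrites as $1/r'=1/s+\mu/N-1$, the gain-of-integrability relation for convolution against $|x|^{-\mu}$.

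The crucial structural observation is that $|x|^{-\mu}$ lies precisely in the weak Lebesgue space $L^{N/\mu,\infty}(\R^N)$, which is the borderline case in which Young's convolution inequality \emph{fails}; this is why a direct appeal to Young is impossible and an interpolation argument is forced. To obtain a weak-type bound I would normalize $\|f\|_s=1$ and split the kernel at a radius $R>0$ as $|x|^{-\mu}=|x|^{-\mu}\mathbf 1_{\{|x|\le R\}}+|x|^{-\mu}\mathbf 1_{\{|x|> R\}}=:K_R^{\mathrm{near}}+K_R^{\mathrm{far}}$. Since $\mu<N$ one has $K_R^{\mathrm{near}}\in L^1$ with $\|K_R^{\mathrm{near}}\|_1=c_1 R^{N-\mu}$, so Young gives $\|K_R^{\mathrm{near}}\ast f\|_s\le c_1R^{N-\mu}$; and since $r>1$ forces $s'>N/\mu$, one has $K_R^{\mathrm{far}}\in L^{s'}$ with $\|K_R^{\mathrm{far}}\|_{s'}=c_2R^{N/s'-\mu}$ and $N/s'-\mu<0$, so Young gives $\|K_R^{\mathrm{far}}\ast f\|_\infty\le c_2R^{N/s'-\mu}$. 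Choosing $R=R(\lambda)$ so that the far part is pointwise below $\lambda$, the level set $\{|I_\mu f|>2\lambda\}$ is controlled by $\{|K_R^{\mathrm{near}}\ast f|>\lambda\}$, and Chebyshev's inequality together with the optimized choice of $R$ yields the weak-type estimate $|\{|I_\mu f|>\lambda\}|\le C\,\lambda^{-r'}$ (the exponent $r'$ falls out of the exponent arithmetic after substituting $R(\lambda)$).

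To upgrade weak-type to strong-type I would run this computation for two exponents $s_0<s<s_1$ bracketing $s$, each paired with its scaling partner $q_i$ given by $1/q_i=1/s_i+\mu/N-1$, obtaining weak-type $(s_0,q_0)$ and $(s_1,q_1)$ bounds for the operator $I_\mu$; the Marcinkiewicz interpolation theorem then delivers the strong-type bound $\|I_\mu f\|_{r'}\le C\|f\|_s$ at the intermediate exponent, and undoing the duality recovers \x{HLS1}.

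The routine parts are the kernel splitting and the bookkeeping of exponents; the only genuine subtlety for the qualitative statement is recognizing the endpoint nature of the kernel and hence the necessity of interpolation rather than Young. The truly hard part lies outside what is needed here: extracting the \emph{sharp} constant $C(N,\mu,s,r)$ requires the Riesz rearrangement inequality to reduce to radially symmetric decreasing competitors, followed by a conformal-invariance and compactness analysis of the resulting maximization problem, which is exactly the content of \cite[Theorem 4.3]{MR1817225}. Since every later argument in this paper uses only the finiteness of $C(N,\mu,s,r)$, I would be content with the interpolation proof above.
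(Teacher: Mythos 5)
The paper contains no proof of this lemma to compare against: it is quoted as a known result, with the proof delegated entirely to the citation \cite[Theorem 4.3]{MR1817225} (Lieb--Loss). Your argument is therefore a genuinely different route, namely the classical Riesz-potential proof, and it is correct. Duality reduces (\ref{HLS1}) to the mapping bound $\|I_\mu f\|_{r'}\le C\|f\|_s$ with $1/r'=1/s+\mu/N-1$; your kernel splitting and the two Young/H\"older bounds are legitimate (the condition $r>1$ does give $s'>N/\mu$, so the far piece is in $L^{s'}$), and the exponent arithmetic in the weak-type bound checks out: writing $\alpha=N-\mu$ and optimizing $R(\lambda)$ gives $|\{|I_\mu f|>2\lambda\}|\le C\lambda^{-Ns/(N-\alpha s)}=C\lambda^{-r'}$. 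The Marcinkiewicz step is also sound, since $1<s<N/(N-\mu)$ leaves room to choose bracketing exponents $1<s_0<s<s_1<N/(N-\mu)$, and the scaling relation $1/q_i=1/s_i-(N-\mu)/N$ is affine in $1/s_i$, so the interpolated pair is exactly $(s,r')$. As for what each approach buys: the citation provides the sharp constant and its attainment in the conformal case $s=r$, which requires Riesz rearrangement and a conformal-invariance/compactness analysis far beyond your method; your proof yields only some finite constant, but, as you correctly note, finiteness is the only property used anywhere in this paper. One small remark in your favor: since the ``sharp constant'' can be read as the infimum of all admissible constants, its existence follows trivially once any finite constant is produced, so your argument does establish the lemma as literally stated; what it does not do is identify that constant, which is the real content of Lieb's theorem.
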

If $A(x)$ is bounded in $\R^N$, then by (\ref{HLS1}) and Sobolev inequality, the integral
$$
\int_{\mathbb{R}^{N}}\int_{\mathbb{R}^{N}}\frac{ A(x) |u(x)|^pA(y) |u(y)|^p}{|x-y|^\mu} \mathrm{d} x\mathrm{d} y
$$
is well defined in $H^1(\R^N,\R)$ for
$$
2_{*,\mu}=\frac{2N-\mu}{N}\leq p\leq2^*_\mu=\frac{2N-\mu}{(N-2)_+}.
$$
As a result, the functional $I: H^1(\R^N,\R)\mapsto\R$,
\begin{equation}\label{I}
I(u):=\frac{1}{2} \int_{\mathbb{R}^{N}}|\nabla u|^{2} \mathrm{d} x-\frac{1}{2p}\int_{\mathbb{R}^{N}}\int_{\mathbb{R}^{N}}\frac{ A(x) |u(x)|^pA(y) |u(y)|^p}{|x-y|^\mu} \mathrm{d} x\mathrm{d} y
\end{equation}
is well defined. Furthermore, by a standard argument, we have $I \in C^1(H^1(\R^N,\R), \R)$.

Due to the constraint $\int_{\mathbb{R}^{N}}|u|^{2}dx=c$, the solution for \eqref{1.1.0} is called normalized solution, which can be found by looking for critical points of the functional $I$ on the constraint
$$
\mathcal{S}(c)=\left\{u \in H^{1}\left(\mathbb{R}^{N},\R\right) :\int_{\R^N}|u|^2dx=c\right\}.
$$
In this situation, the frequency $-\lambda\in\R$ can no longer be fixed but instead appears
as a Lagrange multiplier, and each critical point $u_c \in \mathcal{S}(c)$ of $I|_{\mathcal{S}(c)}$ corresponds a
Lagrange multiplier $\lambda_c \in \R$ such that $(u_c, \lambda_c)$ solves (weakly) \eqref{1.1.0}.
Due to physical application, we are particularly interested in normalized ground state solutions, defined as follows:
\begin{definition}
For any fixed $c > 0$, we say that $u_c\in \mathcal{S}(c)$ is a normalized
ground state solution to (\ref{1.1.0}) if $I'|_{\mathcal{S}(c)}(u_c) = 0$ and
$$I(u_c) = \inf\{I(u):  u \in \mathcal{S}(c), I'|_{\mathcal{S}(c)}(u) = 0\}.$$
\end{definition}
For any $c>0$, we set $$\sigma(c):=\inf\limits_{u\in \mathcal{S}(c)} I(u).$$
If the minimizers of $\sigma(c)$ exist, then all minimizers are critical points
of $I|_{\mathcal{S}(c)}$ as well as normalized ground state solutions to \eqref{1.1.0}.
\begin{remark}
If $\sigma(c)$ admits a global minimizer, then this definition of ground
states naturally extends the notion of ground states from linear quantum mechanics.
\end{remark}
There is a lot of literature studying ground states to the autonomous Choquard equations. For example, the existence
of ground states to the autonomous Choquard equation
\begin{equation}\label{e1.1.2}
-\Delta u-\lambda u=\left(\frac{1}{|x|^{\mu}}\ast |u|^{p}\right)|u|^{p-2}u \quad \text{in}~\R^N
\end{equation}
is established by Moroz and Van Schaftingen \cite{MR3056699} under $\lambda=-1$ and $2_{*,\mu}< p<2^*_\mu$. In \cite{MR3642765}, Ye obtained sharp existence results of the normalized solution to (\ref{e1.1.2}). Precisely,
\begin{itemize}
\item[(i)] If $p \in \left(2_{*,\mu}, \bar{p}\right)$, $\sigma(c)$ has at least one
minimizer for each $c > 0$ and $\sigma(c) > -\infty$;
\item[(ii)] If $p\in\left(\bar{p}, 2^*_\mu\right)$, $\sigma(c)$ has no minimizer for each $c > 0$ and $\sigma(c)=-\infty$;
\item[(iii)] The $L^2-$critical case $p = \bar{p}$ is complicated, see \cite{MR3642765} for details.
\end{itemize}

As far as we know, normalized solution of nonautonomous Choquard equation \eqref{1.1.0} has not been studied.
In this paper, we are interested in normalized solutions for the nonautonomous Choquard equation (\ref{1.1.0}) under two cases: (i) $L^2-$subcritical case, i.e., $p\in(2_{*,\mu},\bar{p})$; (ii) $L^2-$supercritical case, i.e., $p\in(\bar{p}, 2^*_\mu)$.

For the $L^2-$subcritical case, we generalize the result in \cite{MR3642765} to the nonautonomous setting.
\begin{theorem}\label{th1.1} Let $N\geq1$, $0<\mu<N$ and $2_{*,\mu}<p<\bar{p}$. Suppose that
\begin{itemize}
\item[($A_{1}$)] $A\in C^1(\R^N, \R)$, $\lim\limits_{|x|\to+\infty}A(x):=A_\infty\in(0,+\infty)$, and
$A(x)\geq A_\infty$ for all $x\in\R^N$;
\item[($A_{2}$)] there exists a constant $\varrho> 0$ such that $t^{\frac{N-\mu+2\varrho(p-1)}{2}}A(t x)$ is nondecreasing on $t \in (0, +\infty)$ for every $x\in \R^N$.
\end{itemize}
Then $I$ admits a critical point $\bar{u}_c$ on $\mathcal{S}(c)$ which is a negative global minimum of $I$. Moreover, for the above critical point $\bar{u}_c$, there exists Lagrange multiplier $\lambda_c $ such that $(\bar{u}_c, \lambda_c)$ is
a solution of $(\ref{1.1.0})$.
\end{theorem}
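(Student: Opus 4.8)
The plan is to obtain $\bar u_c$ as a global minimizer of $I$ on $\mathcal{S}(c)$ by direct minimization combined with a concentration--compactness analysis, and then to produce $\lambda_c$ via the Lagrange multiplier rule. First I would pin down $\sigma(c)$. For the lower bound, combining the Hardy--Littlewood--Sobolev inequality with $\|A\|_\infty<\infty$ (finite, since $A$ is continuous with a finite limit at infinity) and the Gagliardo--Nirenberg inequality gives $\int\!\!\int\frac{A(x)|u|^pA(y)|u|^p}{|x-y|^\mu}\le C\|A\|_\infty^2\|\nabla u\|_2^{\,Np-2N+\mu}\,c^{\,p-\frac{Np-2N+\mu}{2}}$, where the exponent $Np-2N+\mu<2$ precisely because $p<\bar p$. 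Hence $I(u)\ge \tfrac12\|\nabla u\|_2^2-C'\|\nabla u\|_2^{\,Np-2N+\mu}$, which is bounded below and coercive in $\|\nabla u\|_2$; this gives $\sigma(c)>-\infty$ and shows every minimizing sequence is bounded in $H^1$. For negativity I would insert the mass--preserving dilation $u_t(x)=t^{N/2}u(tx)\in\mathcal S(c)$ and let $t\to0^+$: the gradient term is $O(t^2)$ while the nonlinear term is of order $t^{Np-2N+\mu}$ with $Np-2N+\mu<2$, so it dominates and $I(u_t)<0$ for small $t$, giving $\sigma(c)<0$.

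The decisive point is compactness, and it rests on two strict inequalities. Writing $D_A(u)$ for the double integral and noting that along a minimizing sequence $D_A(u_n)\ge -2p\,I(u_n)\to -2p\,\sigma(c)>0$, the amplitude scaling $\sqrt\theta\,u\in\mathcal S(\theta c)$ gives, for $\theta>1$, $I(\sqrt\theta u)=\theta I(u)-\tfrac{\theta^p-\theta}{2p}D_A(u)<\theta I(u)$, whence $\sigma(\theta c)<\theta\sigma(c)$; since $\sigma(c)<0$ this is equivalent to the strict subadditivity $\sigma(c)<\sigma(c_1)+\sigma(c-c_1)$ for $0<c_1<c$. Next let $I_\infty$, $\sigma_\infty$ denote the functional and infimum obtained by replacing $A$ with $A_\infty$. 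From $A\ge A_\infty$ (hypothesis $(A_1)$) one has $I\le I_\infty$, hence $\sigma(c)\le\sigma_\infty(c)$; and testing $I$ against the positive autonomous minimizer supplied by Ye's result (the $L^2$--subcritical case (i)) upgrades this to the strict $\sigma(c)<\sigma_\infty(c)$ whenever $A\not\equiv A_\infty$ (if $A\equiv A_\infty$ the equation is autonomous and (i) applies directly). The monotone scaling built into $(A_2)$ is what makes these comparisons mutually consistent and, later, guarantees the multiplier is well defined.

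With these in hand I would run Lions' concentration--compactness on $\rho_n=|u_n|^2$ (mass $c$). Vanishing is excluded because $\sigma(c)<0$: vanishing forces $\|u_n\|_q\to0$ for the relevant $q$, hence $D_A(u_n)\to0$ and $\liminf I(u_n)\ge0$, a contradiction. Dichotomy is excluded because a split into masses $\alpha$ and $c-\alpha$, with the escaping piece feeling only $A_\infty$ (as $A(x)\to A_\infty$), would give $\sigma(c)\ge\sigma(\alpha)+\sigma_\infty(c-\alpha)$; but strict subadditivity together with $\sigma(c-\alpha)\le\sigma_\infty(c-\alpha)$ yields $\sigma(c)<\sigma(\alpha)+\sigma(c-\alpha)\le\sigma(\alpha)+\sigma_\infty(c-\alpha)$, a contradiction, while total escape contradicts $\sigma(c)<\sigma_\infty(c)$. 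Therefore the sequence is tight (the concentration point stays bounded, since escape is ruled out) and, up to a subsequence, $u_n\to\bar u_c$ strongly in $L^2$ and in the $L^q$ controlling $D_A$, with $\|\bar u_c\|_2^2=c$.

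Finally, weak lower semicontinuity of $\|\nabla\cdot\|_2^2$ and convergence $D_A(u_n)\to D_A(\bar u_c)$ give $I(\bar u_c)\le\liminf I(u_n)=\sigma(c)$, so $\bar u_c\in\mathcal S(c)$ is a minimizer with $I(\bar u_c)=\sigma(c)<0$ (and the convergence is strong in $H^1$). Being a minimizer of the $C^1$ functional $I$ on the smooth manifold $\mathcal S(c)$, $\bar u_c$ satisfies $I'(\bar u_c)=\lambda_c\bar u_c$ in $H^{-1}$ for some $\lambda_c\in\R$, i.e.\ $(\bar u_c,\lambda_c)$ solves (P) weakly. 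I expect the main obstacle to be the exclusion of dichotomy and escape of mass to infinity --- that is, securing the strict inequalities $\sigma(c)<\sigma_\infty(c)$ and $\sigma(c)<\sigma(c_1)+\sigma(c-c_1)$ --- which is exactly where the coercivity from $p<\bar p$, the sign condition $A\ge A_\infty$, and the scaling monotonicity $(A_2)$ are indispensable.
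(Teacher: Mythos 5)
Your proposal is correct in substance, but it diverges from the paper's proof at the two decisive steps, and the comparison is instructive. For strict subadditivity the paper (Lemma~\ref{lm3.3}) uses the dilation $t\mapsto t^{\varrho}u(x/t)$, which is precisely where hypothesis $(A_2)$ enters; you instead use the amplitude scaling $u\mapsto\sqrt{\theta}\,u$, and since the nonlocal term is homogeneous of degree $2p$ in $u$ and, writing $D_A(u)$ for the double integral as you do, satisfies $D_A(u_n)\ge -2pI(u_n)\to-2p\sigma(c)>0$ along minimizing sequences, your computation $\sigma(\theta c)\le\theta^{p}\sigma(c)<\theta\sigma(c)$ for $\theta>1$ is valid and never invokes $(A_2)$ --- so your route, once completed, proves the theorem under $(A_1)$ alone, which is genuinely stronger than what the paper's argument uses. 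For compactness the paper avoids the classical Lions trichotomy altogether: it proves a nonlocal Brezis--Lieb lemma (Lemma~\ref{lm2.11}), splits into the cases where the weak limit of the minimizing sequence is zero or nonzero, and needs only the \emph{non-strict} comparison $\sigma(c)\le\sigma_\infty(c)$ (Lemma~\ref{lm3.4}) together with subadditivity of $\sigma_\infty$; escape of translations is absorbed by passing to $I_\infty$. You instead run the trichotomy on $|u_n|^2$ and rule out escape via the \emph{strict} inequality $\sigma(c)<\sigma_\infty(c)$, obtained by testing $I$ with Ye's (suitably translated) autonomous minimizer; that inequality is correct under $(A_1)$ when $A\not\equiv A_\infty$ and is conceptually clean. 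The one place where your sketch hides real work is the dichotomy step: because the nonlocal term enters $I$ with a minus sign, disjointness of supports gives $I(u_n)=I(u_n^1)+I(u_n^2)-\frac1p\int\int\frac{A(x)|u_n^1(x)|^pA(y)|u_n^2(y)|^p}{|x-y|^{\mu}}dxdy$, i.e.\ the cross term has the \emph{unfavorable} sign, so the lower bound $I(u_n)\ge I(u_n^1)+I(u_n^2)-\varepsilon_n$ requires proving that this cross interaction vanishes as the supports separate; this needs a kernel-decay argument (e.g.\ $|x-y|^{-\mu}\le d_n^{-(\mu-\mu')}|x-y|^{-\mu'}$ on $|x-y|\ge d_n$, then Hardy--Littlewood--Sobolev with exponent $\mu'<\mu$, noting $\frac{2Np}{2N-\mu'}$ remains in $(2,2^*)$), and you also implicitly need the continuity of $c\mapsto\sigma(c)$ (the paper's Lemma~\ref{lm3.2}), since the split masses are only approximately $\alpha$ and $c-\alpha$. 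These are standard but nontrivial supplements; the paper's Brezis--Lieb route sidesteps both, which is what it buys in exchange for the extra hypothesis $(A_2)$ and the more involved case analysis. The remaining steps of your argument (coercivity and $\sigma(c)<0$ via the mass-preserving dilation, strong $H^1$ convergence of the minimizing sequence, and the Lagrange multiplier rule) coincide with the paper's Lemma~\ref{lm3.1} and the final proof of the theorem.
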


\begin{remark}
For autonomous situation $A\equiv1$, Ye \cite{MR3642765} proved that the Lagrange multiplier $\lambda_c<0$. However, in our nonautonomous setting, we cannot be sure that Lagrange multiplier $\lambda_c $ is negative due to the complexity in the Poho\v{z}aev identity of nonautonomous equation.
\end{remark}

Compared with \cite{MR3642765}, the proof of Theorem \ref{th1.1} is more complex due to more general
nonlinearity in (\ref{1.1.0}). The main difficulty is to prove the compactness of a minimizing
sequence of $\sigma(c)= \inf\limits_{u\in \mathcal{S}(c)}I$. To do that, inspired by \cite{MR2826402,MR4081327,MR778970,MR778974}, we shall establish the following subadditivity inequality:
\begin{equation}\label{c1.11}
\sigma(c)<\sigma(\alpha)+\sigma(c-\alpha), \quad \forall 0<\alpha<c
\end{equation}
with the help of the scaling
\begin{equation}\label{A1}
s\mapsto u_{s}:=s^{\varrho}u(x/s).
\end{equation}

 Let $Z_c$ denote the set of the normalized ground state solutions for (\ref{t1.1.0}). We also interest in the stability and instability of normalized ground state solutions, defined as follows:
\begin{definition} $Z_{c}$ is orbitally stable if for every $\varepsilon > 0$ there exists $\delta > 0$ such that, for
any $\psi_0 \in H^1(\R^N, \mathbb{C})$ with $\inf\limits_{u\in Z_{c}} \|\psi_0 - u\|_{H^1(\R^N, \mathbb{C})} < \delta$, we have
$$\inf\limits_{u\in Z_{c}}
\|\psi(t,\cdot)-u\|_{H^1(\R^N, \mathbb{C})}< \varepsilon \quad \forall t > 0,$$
where $\psi(t, \cdot)$ denotes the solution to (\ref{t1.1.0}) with initial datum $\psi_0$.
A standing wave $e^{-i\lambda t} u$ is strongly unstable if for every $\varepsilon > 0$ there exists $\psi_0\in H^1(\R^N, \mathbb{C})$
such that $\|u -\psi_0\|_{H^1(\R^N, \mathbb{C})} < \varepsilon$, and $\psi(t, \cdot)$ blows-up in finite time.
\end{definition}

Following the same argument as in \cite{MR677997}, we can deduce that $Z_c$ is orbitally stable provided that any minimizing sequence to $\sigma(c)$ is compact in $H^1(\R^N,\R)$.

Note that due to the presence of the coefficients $A(x)$ in (\ref{1.1.0}), our minimization problems are
 not invariant by the action of the translations. To overcome this difficult, we adopt the method of studying the nonautonomous Schr\"{o}dinger equation in \cite{bellazzini}. The main point is the analysis of the compactness of minimizing sequences
 to suitable constrained minimization problem related to (\ref{1.1.0}).

More precisely,
\begin{theorem}\label{th1.1.5}
Let $N\geq1$, $0<\mu<2$, $2\leq p<\bar{p}$. Suppose that
\begin{itemize}
\item[($A'_{1}$)] $A\in L^\infty(\R^N, \R)$, $A(x)\geq0$ for almost every $x\in\R^N$, and there is $A_0>0$ such that
$meas\{ A(x)>A_0\}\in(0,+\infty)$.
\end{itemize}
Then there exists $c_0> 0$ such that all the minimizing sequences for $I|_{\mathcal{S}(c)}$ are compact
 in $H^1(\R^N,\R)$ provided that $c> c_0$. In particular, $Z_c$ is a nonempty compact set
 and it is orbitally stable.
\end{theorem}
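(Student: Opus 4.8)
The statement is about relative compactness of minimizing sequences for $\sigma(c)=\inf_{\mathcal S(c)}I$, from which orbital stability follows by the Cazenave--Lions argument of \cite{MR677997}. I will run the concentration--compactness principle of Lions, the only nontrivial point being the failure of translation invariance, which I handle using the finiteness of $\mathrm{meas}\{A>A_0\}$. Throughout write $q_*:=\frac{2Np}{2N-\mu}$, $m:=Np-2N+\mu$, and $\mathcal D(u):=\int_{\R^N}\int_{\R^N}\frac{A(x)|u(x)|^pA(y)|u(y)|^p}{|x-y|^\mu}\,dx\,dy$, so that $I(u)=\frac12\|\nabla u\|_2^2-\frac1{2p}\mathcal D(u)$ and $\mathcal D(u)=p\|\nabla u\|_2^2-2pI(u)$. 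By the Hardy--Littlewood--Sobolev inequality $\x{HLS1}$, $A\in L^\infty$, and the Gagliardo--Nirenberg inequality, $\mathcal D(u)\le C\|A\|_\infty^2\|u\|_{q_*}^{2p}\le C'\|\nabla u\|_2^{\,m}c^{\,p-m/2}$ on $\mathcal S(c)$; since $p<\bar p$ is equivalent to $m<2$, the functional $I$ is coercive on $\mathcal S(c)$. Hence $\sigma(c)>-\infty$ and every minimizing sequence is bounded in $H^1(\R^N,\R)$.

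\textbf{The binding inequality (main step).} Let $I_{A_0}(u):=\frac12\|\nabla u\|_2^2-\frac{A_0^2}{2p}\int\int\frac{|u(x)|^p|u(y)|^p}{|x-y|^\mu}$ and $\sigma_{A_0}(c):=\inf_{\mathcal S(c)}I_{A_0}$; this autonomous problem is the natural ``problem at infinity'', since $A\le A_0$ outside the finite-measure set $\{A>A_0\}$. A mass-preserving rescaling gives the homogeneity $\sigma_A(c)=A^{\kappa}\sigma_1(c)$ with $\kappa=\frac{4}{2-m}>0$, and $\sigma_1(c)<0$ (test $I_1$ with $\tau^{N/2}w(\tau\cdot)$ and let $\tau\to0$, using $m<2$); thus $\sigma_{A_0}(c)<0$. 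I claim there is $c_0>0$ with
\[
\sigma(c)<\sigma_{A_0}(c)\qquad\text{for all }c>c_0 .
\]
To see this, fix a Lebesgue point $z$ of $A$ lying in $\{A>A_0\}$ (possible since this set has positive measure), so that the averages of $A$ over $B(z,\epsilon)$ converge to $A(z)>A_0$. Test $I$ with the concentrating bump $u_\epsilon(x):=\sqrt c\,\epsilon^{-N/2}\phi\big((x-z)/\epsilon\big)$, $\phi\in C_c^\infty$, $\|\phi\|_2=1$. A change of variables gives $I(u_\epsilon)=\frac{c}{2\epsilon^2}\|\nabla\phi\|_2^2-\frac{c^p}{2p}\epsilon^{-m}\big(A(z)^2+o_\epsilon(1)\big)\int\int\frac{\phi^p\phi^p}{|\xi-\eta|^\mu}$. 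Optimizing in $\epsilon$ forces $\epsilon_*\sim c^{-(p-1)/(2-m)}\to0$ (so the bump indeed only feels the value $A(z)$), and yields $\sigma(c)\le I(u_{\epsilon_*})=(1+o(1))\,(A(z)/A_0)^{\kappa}\sigma_{A_0}(c)$. As $A(z)>A_0$ makes the prefactor strictly larger than $1$ while $\sigma_{A_0}(c)<0$, the claim follows for $c$ large.

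\textbf{Subadditivity and concentration--compactness.} For $t\ge1$ and $u\in\mathcal S(c)$ one has $\sqrt t\,u\in\mathcal S(tc)$ and, using $\mathcal D(u)=p\|\nabla u\|_2^2-2pI(u)$,
\[
I(\sqrt t\,u)=t^pI(u)+\tfrac{t-t^p}{2}\|\nabla u\|_2^2\le t^pI(u),
\]
so $\sigma(tc)\le t^p\sigma(c)$. Applying this with base masses $\alpha$ and $c-\alpha$ gives $\sigma(\alpha)\ge(\alpha/c)^p\sigma(c)$ and $\sigma(c-\alpha)\ge((c-\alpha)/c)^p\sigma(c)$; since $(\alpha/c)^p+((c-\alpha)/c)^p<1$ and $\sigma(c)<0$, this yields the strict subadditivity $\sigma(c)<\sigma(\alpha)+\sigma(c-\alpha)$ for $0<\alpha<c$ (and continuity of $\sigma$). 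Now let $u_n$ be a minimizing sequence (bounded, by Step 1) and apply Lions' lemma to $\rho_n=|u_n|^2$. \emph{Vanishing} is impossible: it forces $\|u_n\|_{q_*}\to0$, hence $\mathcal D(u_n)\to0$ and $\liminf I(u_n)\ge0>\sigma(c)$. \emph{Dichotomy} is impossible: splitting $u_n$ into pieces with separating supports makes the cross term in $\mathcal D$ vanish, so $\sigma(c)\ge\sigma(\alpha)+\sigma(c-\alpha)$, contradicting strict subadditivity. In the remaining \emph{compactness} case there are $y_n$ about which the mass concentrates; if $|y_n|\to\infty$ then, writing $A(\cdot+y_n)\le A_0+\|A\|_\infty\chi_{E_n}$ with $E_n=\{A>A_0\}-y_n$ and using that $\int_{E_n}|u_n(\cdot+y_n)|^{q_*}\to0$ (finite measure escaping to infinity against the $L^{q_*}$-convergent profile), we get $\liminf I(u_n)\ge\sigma_{A_0}(c)$, contradicting the binding inequality. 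Hence $y_n$ stays bounded, $u_n$ is tight, $u_n\to u$ in $L^{q_*}$, $\|u\|_2^2=c$, and weak lower semicontinuity gives $I(u)\le\sigma(c)$; therefore $u$ is a minimizer, $\|\nabla u_n\|_2\to\|\nabla u\|_2$, and $u_n\to u$ in $H^1(\R^N,\R)$.

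\textbf{Conclusion and main obstacle.} Relative compactness of every minimizing sequence produces a minimizer, so $Z_c\ne\emptyset$; as any sequence in $Z_c$ is minimizing, $Z_c$ is compact, and orbital stability follows from \cite{MR677997}. The essential difficulty, and the place where assumption $(A'_1)$ and the threshold $c>c_0$ are genuinely used, is ruling out the escaping case $|y_n|\to\infty$ in the compactness alternative: translation invariance fails, so one cannot simply recentre. This is overcome by combining the finiteness of $\mathrm{meas}\{A>A_0\}$ (which strips escaping mass of its nonlinear gain and bounds its energy below by $\sigma_{A_0}(c)$) with the strict binding inequality $\sigma(c)<\sigma_{A_0}(c)$, itself produced by concentrating a test function near a density point of $\{A>A_0\}$ once $c$ is large enough that the optimal width $\epsilon_*\to0$.
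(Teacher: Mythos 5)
Your overall strategy is sound and reaches the theorem, but it is a genuinely different route from the paper's. The paper does not run Lions' trichotomy by hand: it invokes the abstract variational principle of Bellazzini (Proposition \ref{pr1.2}) with $\mathcal H=H^1$, $\mathcal H_1=\mathcal D^{1,2}$, $\mathcal H_2=L^2$, splitting the nonlocal energy into a part $W$ built from $\min\{A(x),A_0\}$ and a weakly continuous excess $T$ supported where $A\ge A_0$; all vanishing/dichotomy/splitting analysis is absorbed into that black box, and the only quantitative input is the strict inequality $\sigma(c)<\sigma^{\min\{A,A_0\},A(y)}(c)$ for $c>c_0$ (Lemma \ref{J3.1}). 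You instead prove strict subadditivity directly via the amplitude scaling $u\mapsto\sqrt t\,u$ (giving $\sigma(tc)\le t^p\sigma(c)$ --- note this is the right tool here, since the paper's subadditivity Lemma \ref{lm3.3} relies on hypothesis $(A_2)$, which is not available under $(A'_1)$, and is indeed not used in the paper's proof of Theorem \ref{th1.1.5}), then run concentration--compactness and exclude escaping concentration by combining $\mathrm{meas}\{A>A_0\}<\infty$ with the binding inequality $\sigma(c)<\sigma_{A_0}(c)$. Your comparison problem with constant coefficient $A_0$ is slightly stronger than the paper's hybrid one, since $\sigma_{A_0}(c)\le\sigma^{\min\{A,A_0\},A(y)}(c)$, and it is exactly what your escaping-mass estimate needs. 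Both proofs share the same key mechanism: for large $c$ the optimal scale of the constant-coefficient minimizer shrinks to zero, so concentrating it at a Lebesgue point of $\{A>A_0\}$ strictly beats the comparison level. What each buys: your argument is self-contained, while the paper's is shorter given the citation; your argument also only ever uses finiteness of $\mathrm{meas}\{A>A_0\}$, whereas the paper's verification of weak continuity of $T$ asserts boundedness of $\{A\ge A_0\}$.

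There is, however, one step you must repair. Your displayed claim $I(u_{\epsilon_*})=(1+o(1))\,(A(z)/A_0)^{\kappa}\sigma_{A_0}(c)$ is false for a \emph{generic} bump $\phi\in C_c^\infty$: optimizing only over the width $\epsilon$ of a fixed profile gives $I(u_{\epsilon_*})=(1+o(1))\,(A(z)/A_0)^{\kappa}\,r(\phi)\,\sigma_{A_0}(c)$, where $r(\phi)\in(0,1]$ is the ratio (of two negative numbers) between the best energy achievable by rescalings of $\phi$ in the constant-coefficient problem and the true infimum $\sigma_{A_0}(c)$. If $r(\phi)<(A_0/A(z))^{\kappa}$, your bound does not yield $\sigma(c)<\sigma_{A_0}(c)$, and the whole threshold $c_0$ collapses. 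The fix is precisely the paper's move in Lemma \ref{J3.1}: take the test function to be (an $H^1$-approximation of) the actual minimizer $u_0$ of the constant-coefficient problem at unit mass --- which exists by the $L^2$-subcritical theory, i.e.\ Lemma \ref{lm3.5} applied with constant coefficient --- suitably rescaled and centered at the Lebesgue point; then $r(\phi)$ is as close to $1$ as needed and $(A(z)/A_0)^{\kappa}r(\phi)>1$ closes the argument. A second, minor point: in the escaping case you should justify $\int_{E_n}|u_n(\cdot+y_n)|^{q_*}\to0$ by first showing $\mathrm{meas}\bigl(\{A>A_0\}\cap B_R(y_n)\bigr)\to0$ for each fixed $R$ when $|y_n|\to\infty$ (this follows from the finite measure by a disjoint-balls argument, since $\{A>A_0\}$ need not be bounded and could in principle ``follow'' $y_n$); with that observation your estimate is correct.
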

\begin{remark}
The condition $0<\mu<2$ in Theorem \ref{th1.1.5} is to ensure $2<\bar{p}=\frac{2N-\mu+2}{N}$.
If $p<2$, the nonlocal term $\left(|x|^{-\mu}\ast A|\psi|^p\right)A|\psi|^{p-2}\psi$ in dispersion equation (\ref{t1.1.0}) is singular, the existence of local-well posedness of (\ref{t1.1.0}) is invalid, similar to the autonomous equation ($A\equiv 1$) in \cite{feng}.
\end{remark}

In the second part of this article, we consider the $L^2-$supercritical case. Since $\sigma(c) = -\infty$ for $p\in(\bar{p}, 2^*_\mu)$, it is impossible to search for a minimum of $I$ on $\mathcal{S}(c)$. So it is nature to look for a critical point of $I$ having a minimax characterization. For example, for the following Schr\"{o}dinger equation
\begin{equation}\nonumber
-\Delta u-\lambda u=f(u),~~
u\in H^{1}(\R^N,\R),
\end{equation}
Jeanjean \cite{MR1430506} constructed mountain-pass geometrical structure on
$\mathcal{S}(c)\times \R$ to an auxiliary functional
\begin{equation}\label{c1.8}
\tilde{I}(u, t)=\frac{e^{2 t}}{2} \int_{\mathbb{R}^{N}}|\nabla u|^{2} \mathrm{d} x-\frac{1}{e^{t N}} \int_{\mathbb{R}^{N}} F\left(e^{\frac{t N}{2}} u\right) \mathrm{d} x,
\end{equation}
where $F(u)=\int_0^uf(t)dt$.
 Then applying the Ekeland principle to the auxiliary functional, the author obtained a
sequence $\{(v_n,s_n)\} \subset \mathcal{S}(c)\times\R$ which can be used to construct a bounded Palais-Smale sequence $\{u_n\}\subset \mathcal{S}(c)$ for $I$ at the M-P level.

By using Jeanjean's method \cite{MR1430506}, Li and Ye \cite{MR3390522} obtained the normalized solutions
to the Choquard equation:
\begin{equation}
-\Delta u-\lambda u=\left(|x|^{-\mu}\ast F(u)\right)f(u),
\end{equation}
where $\lambda\in\R$, $N \geq3$, $\mu\in(0,N)$, and $F(u)$ behaves like $|u|^p$ for $\frac{2N-\mu+2}{N}<p<\frac{2N-\mu}{N-2}$.

However, for nonautonomous equation (\ref{1.1.0}), the method of constructing a Poho\u{z}aev-Palais-Smale sequence in \cite{MR1430506} fails. To overcome this difficulty, we adopt the method in \cite{MR4081327}. More precisely, we assume
that
\begin{itemize}
\item[($A_{3}$)] $t \mapsto (Np-2N+\mu)A(t x) - 2\nabla A(t x) \cdot (t x)$
is nonincreasing on $(0,\infty)$ for every $x\in\R^N$;
\item[($A_{4}$)] $t^{\frac{2p-(Np-2N+\mu)}{2}}A(t x)$ is strictly increasing on $t \in (0, \infty)$ for every $x \in \R^N$.
\end{itemize}
Besides $A\equiv$constant, there are indeed many functions which satisfy
$(A_1), (A_3)$ and $(A_4)$. For example
\begin{itemize}
\item[($i$)] $A_1(x) = 1 + be^{-\tau|x|}$ with $0 < b \leq e\cdot\frac{2p- (Np- 2N+\mu)}{2}$ and $\tau> 0$;
\item[($ii$)] $A_2(x) = 1 + \frac{b}{1+|x|}$ with $0 < b \leq 2[2p- (Np- 2N+\mu)]$.
\end{itemize}
Under $(A_1), (A_3)$ and $(A_4)$, we shall establish the existence of normalized ground
state solutions to the nonautonomous Choquard equation (\ref{1.1.0}) by taking a minimum
on the manifold
\begin{equation}\label{c1.9}
\mathcal{M}(c)=\left\{u \in \mathcal{S}(c) : J(u) :=\left.\frac{\mathrm{d}}{\mathrm{d} t} I\left( u^{t}\right)\right|_{t=1}=0\right\},
\end{equation}
where $u^{t}(x) :=t^{N / 2}u(t x)$ for all $t > 0$ and $x\in\R^N$, and $u^t \in \mathcal{S}(c)$ if $u \in \mathcal{S}(c)$.
\begin{theorem}\label{th1.2} Suppose that $N\geq1$, $0<\mu<N$, $\bar{p}<p<2^*_\mu$, $(A_1), (A_3)$ and $(A_4)$ hold. Then for any $c > 0$, (\ref{1.1.0})
has a couple of solutions $\left(\overline{u}_{c}, \lambda_{c}\right) \in \mathcal{S}(c) \times \mathbb{R}$ such that
$$
I\left(\overline{u}_{c}\right)=\inf _{u \in \mathcal{M}(c)} I(u)=\inf _{u \in \mathcal{S}(c)} \max _{t>0} I\left( u^{t}\right)>0.
$$
\end{theorem}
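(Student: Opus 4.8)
The plan is to bypass the absence of a global minimizer by realizing the claimed value as a minimum over the Pohozaev-type manifold $\mathcal{M}(c)$, to produce a Palais--Smale sequence for $I|_{\mathcal{S}(c)}$ that asymptotically satisfies $J=0$, and then to recover compactness. I begin with the fiber maps. For fixed $u\in\mathcal{S}(c)$ set $\Psi_u(t):=I(u^t)$; a change of variables gives $\|\nabla u^t\|_2^2=t^2\|\nabla u\|_2^2$ and
\[
\Psi_u(t)=\frac{t^2}{2}\|\nabla u\|_2^2-\frac{t^{Np-2N+\mu}}{2p}\int_{\R^N}\int_{\R^N}\frac{A(x/t)|u(x)|^pA(y/t)|u(y)|^p}{|x-y|^\mu}\,dx\,dy.
\]
Since $p>\bar{p}$ forces $Np-2N+\mu>2$ and $A$ is bounded with $A\ge A_\infty>0$, one checks $\Psi_u(t)\to0^+$ as $t\to0^+$ and $\Psi_u(t)\to-\infty$ as $t\to+\infty$. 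The group law $(u^t)^s=u^{ts}$ yields $J(u^t)=t\,\Psi_u'(t)$, so the critical points of $\Psi_u$ are exactly the scalings landing on $\mathcal{M}(c)$. The purpose of $(A_3)$--$(A_4)$ is to force a unique critical point $t_u$, a strict global maximum: rewriting $J(v)=\|\nabla v\|_2^2-\frac1{2p}\int\!\int\frac{[(Np-2N+\mu)A(x)-2\nabla A(x)\cdot x]A(y)|v|^p|v|^p}{|x-y|^\mu}$ and exploiting the monotonicity in $(A_3)$ together with $(A_4)$ shows $\Psi_u''(t_0)<0$ at any critical $t_0$. Consequently $\mathcal{M}(c)$ is a natural constraint, and since $\max_{t>0}I(u^t)=I(u^{t_u})$ with $u^{t_u}\in\mathcal{M}(c)$ while $t_v=1$ for $v\in\mathcal{M}(c)$, I obtain $\inf_{\mathcal{M}(c)}I=\inf_{u\in\mathcal{S}(c)}\max_{t>0}I(u^t)=:m(c)$.

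Next I establish positivity and construct the sequence. By Hardy--Littlewood--Sobolev and the Gagliardo--Nirenberg inequality of convolution type, together with $\|A\|_\infty<\infty$,
\[
\int_{\R^N}\int_{\R^N}\frac{A(x)|v|^pA(y)|v|^p}{|x-y|^\mu}\,dx\,dy\le C\,\|\nabla v\|_2^{\,Np-2N+\mu}\,\|v\|_2^{\,2p-(Np-2N+\mu)},
\]
so that $\Psi_u(t)\ge\tfrac12 r^2-C'r^{\,Np-2N+\mu}$ with $r=t\|\nabla u\|_2$; maximizing the right-hand side over $r>0$ and invoking the identity of the previous paragraph gives $m(c)\ge\delta_0>0$. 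To obtain a Palais--Smale sequence with the Pohozaev information I follow \cite{MR4081327} (in the spirit of \cite{MR1430506}): applying a minimax principle to the auxiliary functional $(u,t)\mapsto I(u^t)$ on $\mathcal{S}(c)\times\R$, whose mountain-pass value is $m(c)$, I extract $(u_n,t_n)$ with $t_n$ bounded, $I(u_n^{t_n})\to m(c)$, $(I|_{\mathcal{S}(c)})'(u_n^{t_n})\to0$ and $J(u_n^{t_n})\to0$. Writing $w_n:=u_n^{t_n}$ then yields a Palais--Smale sequence for $I|_{\mathcal{S}(c)}$ at level $m(c)$ that asymptotically lies on $\mathcal{M}(c)$.

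The hard part will be the boundedness of $\{w_n\}$, and this is precisely where the autonomous argument fails. When $A$ is constant, the combination $I(w_n)-\frac{1}{Np-2N+\mu}J(w_n)$ is a positive multiple of $\|\nabla w_n\|_2^2$, giving boundedness at once; in the nonautonomous case it carries the extra term $\frac1p\int\!\int\frac{(\nabla A(x)\cdot x)A(y)|w_n|^p|w_n|^p}{|x-y|^\mu}$, which must be prevented from destroying coercivity. My plan is to combine $I(w_n)\to m(c)$ with $J(w_n)\to0$ to get $\|\nabla w_n\|_2^2=\frac1{2p}\int\!\int\frac{[(Np-2N+\mu)A(x)-2\nabla A(x)\cdot x]A(y)|w_n|^p|w_n|^p}{|x-y|^\mu}+o(1)$ and $2p\,m(c)+o(1)=\int\!\int\frac{[\frac{Np-2N+\mu-2}{2}A(x)-\nabla A(x)\cdot x]A(y)|w_n|^p|w_n|^p}{|x-y|^\mu}$, and then use the sign and comparability of the two weights $(Np-2N+\mu)A-2\nabla A\cdot x$ and $\tfrac{Np-2N+\mu-2}{2}A-\nabla A\cdot x$ guaranteed by $(A_1),(A_3),(A_4)$ to bound the first integral by a constant multiple of the second, whence $\|\nabla w_n\|_2^2\le C\,m(c)+o(1)$.

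Finally, with $\{w_n\}$ bounded I pass to a weak limit $w_n\rightharpoonup\overline{u}_c$ in $H^1(\R^N,\R)$. Because the coefficient $A$ breaks translation invariance, I rule out vanishing and dichotomy through $(A_1)$: comparing $m(c)$ with the level $m_\infty(c)$ of the problem at infinity ($A\equiv A_\infty$), the strict inequality forced by $A\ge A_\infty$ prevents mass from escaping to infinity, so $w_n\to\overline{u}_c$ strongly, $\overline{u}_c\in\mathcal{M}(c)$ and $I(\overline{u}_c)=m(c)$. Since $\mathcal{M}(c)$ is a natural constraint, $\overline{u}_c$ is a critical point of $I|_{\mathcal{S}(c)}$, and the Lagrange multiplier rule provides $\lambda_c\in\R$ with $(\overline{u}_c,\lambda_c)$ solving (\ref{1.1.0}); the variational characterization then gives $I(\overline{u}_c)=\inf_{\mathcal{M}(c)}I=\inf_{\mathcal{S}(c)}\max_{t>0}I(u^t)>0$, as claimed.
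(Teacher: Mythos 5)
Your fiber-map analysis, the identity $\inf_{\mathcal{M}(c)}I=\inf_{u\in\mathcal{S}(c)}\max_{t>0}I(u^t)$, and the positivity $m(c)>0$ are fine and match Lemmas \ref{cr2.7}--\ref{lm2.13}. The genuine gap is in your compactness step. You claim that ``the strict inequality forced by $A\ge A_\infty$ prevents mass from escaping to infinity.'' But $A\ge A_\infty$ only gives $I\le I_\infty$, hence $m(c)\le m_\infty(c)$ (this is (\ref{A2})); no strict inequality is available, and none can be, because $A\equiv A_\infty$ satisfies $(A_1),(A_3),(A_4)$, in which case $m(c)=m_\infty(c)$ and minimizing sequences are at best compact up to translations. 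So escape to infinity cannot be excluded, and your argument has no mechanism for that case. The paper's proof of Lemma \ref{lm2.15} confronts it head on: when the weak limit vanishes, the sequence is shown (via (\ref{1111})--(\ref{2222})) to be asymptotically a minimizing sequence for the limit functional $I_\infty$ with $J_\infty\to 0$; after translation one obtains a nontrivial weak limit $\hat{u}$, the nonlocal Brezis--Lieb lemma (Lemma \ref{lm2.11}) and the argument of \cite{MR4081327} give $I_\infty(\hat{u})=m(c)$, $J_\infty(\hat{u})=0$, and then $\hat{u}$ is pulled back onto $\mathcal{M}(c)$: choosing $\tilde{t}$ with $\hat{u}^{\tilde{t}}\in\mathcal{M}(c)$ (Lemma \ref{lm2.8}), the chain $m(c)\le I(\hat{u}^{\tilde{t}})\le I_\infty(\hat{u}^{\tilde{t}})\le I_\infty(\hat{u})=m(c)$, which uses $A\ge A_\infty$ pointwise and the fiber maximality of Lemma \ref{cr2.7} for $I_\infty$, shows $m(c)$ is achieved. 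Escape is not prevented; it is accommodated.

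Two further problems. First, your Palais--Smale sequence is asserted, not constructed: you invoke Jeanjean's minimax on the auxiliary functional $(u,t)\mapsto I(u^t)$ and simply state that $t_n$ is bounded, that the minimax level is $m(c)$, and that $J(u_n^{t_n})\to 0$; this is exactly the step the paper declares to fail in the nonautonomous setting (see the discussion preceding $(A_3)$--$(A_4)$ in the introduction, which is the stated reason for adopting the method of \cite{MR4081327}). The paper avoids Palais--Smale sequences entirely by minimizing $I$ over $\mathcal{M}(c)$, where $J(u_n)=0$ holds exactly, so that boundedness is the one-line estimate (\ref{2.10.ii}) --- in particular, what you call ``the hard part'' (boundedness) is not where the difficulty lies; your sign/comparability plan for it is correct but is precisely the paper's computation. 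Second, your closing appeal to ``$\mathcal{M}(c)$ is a natural constraint'' is not justified: since $A$ is only $C^1$, the functional $J$ (which contains $\nabla A(x)\cdot x$) is merely continuous, not $C^1$, so the usual Lagrange-multiplier argument on the manifold is unavailable. The paper instead proves that the minimizer on $\mathcal{M}(c)$ is a critical point of $I|_{\mathcal{S}(c)}$ by a deformation argument (Lemma \ref{lm2.16}, after \cite{MR4081327}), which relies only on the strict fiber maximality of Lemma \ref{cr2.7}. If your strong convergence of a genuine Palais--Smale sequence held, this last point would be moot, but since both the construction of that sequence and its compactness are unsubstantiated, the proposal as written does not close.
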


To address the lack of compactness, we should consider the \emph{limit equation} of (\ref{1.1.0}):
\begin{equation}\label{1.1.2}
-\Delta u-\lambda u=A_\infty^2\left(|x|^{-\mu}\ast|u|^p\right)|u|^{p-2}u,~~
u\in H^{1}(\R^N).
 \tag{P0}
\end{equation}
The energy functional is defined as follows:
\begin{equation}\label{c1.13}
I_\infty(u)=\frac{1}{2} \int_{\mathbb{R}^{N}}|\nabla u|^{2} \mathrm{d} x-\frac{A_\infty^2}{2p}\int_{\mathbb{R}^{N}}\int_{\mathbb{R}^{N}}\frac{ |u(x)|^p |u(y)|^p}{|x-y|^\mu} \mathrm{d} x\mathrm{d} y.
\end{equation}
Similar to (\ref{c1.9}), we define
\begin{equation}\label{c1.14}
\mathcal{M}_\infty(c)=\left\{u \in \mathcal{S}(c) : J_\infty(u) :=\left.\frac{\mathrm{d}}{\mathrm{d} t} I_\infty\left( u^{t}\right)\right|_{t=1}=0\right\}.
\end{equation}
\begin{remark}
Compared to \cite{MR4081327}, the main difficulty in our nonlocal setting:
When proving that $\inf _{u \in \mathcal{M}(c)} I(u)$ can be achieved, it needs to be compared (\ref{1.1.0}) with the limit equation. The difference between $I$ and $I_\infty$ is more complicated than that of the Schr\"{o}dinger equation.
\end{remark}

Finally, we give our future research directions about this article: \\
For $A\equiv1$ and $\bar{p}<p<2^*_\mu$, by using the blow up for a class of initial data with nonnegative energy, Chen and Guo \cite{chen} proved that the standing wave of (\ref{t1.1.0}) must be strongly unstable. For nonautonomous situation ($A\not\equiv$constant), the method in \cite{chen} is invalid. We will study the problem in the future.

This paper is organized as follows. In section 2, we prove Theorem \ref{th1.1} and Theorem \ref{th1.1.5}. In section 3, we show Theorem \ref{th1.2}.

\vskip2mm
 \par\noindent
 In this paper, we make use of the following notation:  \\
$\diamondsuit$ $C, C_i , i = 1, 2, \cdot\cdot\cdot,$ will be repeatedly used to denote various positive constants whose exact values are irrelevant. \\
$\diamondsuit$ \begin{equation*}
2^*=
\left\{
\begin{array}{ll}
\aligned
&\frac{2N}{N-2}~&\text{if}~N\geq 3\\
&+\infty~&\text{if}~N=1,2
\endaligned
\end{array}
\right.
\end{equation*} denotes the Sobolev critical exponent. \\
$\diamondsuit$ $o(1)$ denotes the infinitesimal as $n\to+\infty$.\\
$\diamondsuit$ For the sake of simplicity, integrals over the whole $\R^N$ will be often written $\int$.

\vskip4mm
{\section{ $L^2-$subcritical case }}
 \setcounter{equation}{0}
First, we prove a nonlocal version of Brezis-Lieb lemma, which will be used in the proof below both $L^2-$subcritical case and $L^2-$supercritical case. We need the following classical Brezis-Lieb lemma \cite{bogachev}.
\begin{lemma} (\cite{bogachev}) Let $N\in\mathbb{N}$ and $q\in [2, 2^*]$.
If $u_n\rightharpoonup u$ in $H^1(\R^N,\R)$, then
\begin{equation}\label{BL}
\aligned
\int|u_{n}-u|^qdx-\int|u_{n}|^qdx=\int|u|^qdx+o(1).
\endaligned
\end{equation}
\end{lemma}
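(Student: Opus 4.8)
The plan is to reduce the statement to the classical Brezis--Lieb lemma for sequences that are bounded in $L^q$ and converge almost everywhere, and then to run the standard truncation argument.

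First I would extract the two ingredients hidden in $u_n\rightharpoonup u$ in $H^1(\R^N,\R)$. Since weakly convergent sequences are bounded, $\sup_n\|u_n\|_{H^1}<\infty$, and the Sobolev embedding $H^1(\R^N,\R)\hookrightarrow L^q(\R^N,\R)$ for $q\in[2,2^*]$ gives $\sup_n\|u_n\|_q<\infty$; in particular $M:=\sup_n\|u_n-u\|_q^q<\infty$. For almost-everywhere convergence I would use the compact embedding $H^1(B_R)\hookrightarrow\hookrightarrow L^2(B_R)$ on each ball $B_R$, which upgrades the weak convergence to strong $L^2_{\mathrm{loc}}$ convergence, hence to a.e.\ convergence along a subsequence; a diagonal argument over $R\in\N$ produces a single subsequence with $u_n\to u$ a.e.\ on $\R^N$. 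Because the right-hand side $\int|u|^q\,dx$ is determined by the (unique) weak limit $u$ alone, the subsequence principle then yields the conclusion for the whole sequence: every subsequence has a further subsequence along which the identity holds with the same limit.

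Next I would record the elementary pointwise inequality: for every $\varepsilon>0$ there is $C_\varepsilon>0$ with $\big||a+b|^q-|a|^q\big|\le\varepsilon|a|^q+C_\varepsilon|b|^q$ for all $a,b\in\R$. This follows from the homogeneity of both sides together with the continuity of $(a,b)\mapsto\big||a+b|^q-|a|^q\big|$ on the unit sphere, or directly from the mean value theorem applied to $t\mapsto|t|^q$ followed by Young's inequality. Taking $a=u_n-u$ and $b=u$ gives $\big||u_n|^q-|u_n-u|^q\big|\le\varepsilon|u_n-u|^q+C_\varepsilon|u|^q$ pointwise, and hence, with the triangle inequality, $f_n:=\big||u_n|^q-|u_n-u|^q-|u|^q\big|\le\varepsilon|u_n-u|^q+(C_\varepsilon+1)|u|^q$.

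The core is then the truncation step. Since $u_n\to u$ a.e., we have $|u_n|^q\to|u|^q$ and $|u_n-u|^q\to0$ a.e., so $f_n\to0$ a.e. The positive part $W_n:=\big(f_n-\varepsilon|u_n-u|^q\big)^+$ satisfies $0\le W_n\le(C_\varepsilon+1)|u|^q$, and the dominating function lies in $L^1(\R^N)$ because $u\in H^1\subset L^q$; as $W_n\to0$ a.e., dominated convergence gives $\int W_n\,dx\to0$. Using $f_n\le W_n+\varepsilon|u_n-u|^q$, we get $\int f_n\,dx\le\int W_n\,dx+\varepsilon M$, so $\limsup_n\int f_n\,dx\le\varepsilon M$, and letting $\varepsilon\to0$ yields $\int f_n\,dx\to0$. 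This forces $\int|u_n|^q\,dx-\int|u_n-u|^q\,dx-\int|u|^q\,dx\to0$, which is the splitting identity \x{BL}. I expect the only genuinely delicate point to be the passage from a.e.\ convergence along a subsequence to the $o(1)$ statement for the full sequence, together with the endpoint $q=2^*$ where Rellich compactness fails; both are handled as above, since the limit $\int|u|^q$ depends on $u$ alone and the a.e.\ convergence is already supplied by the subcritical compact embeddings. The pointwise inequality and the dominated-convergence estimate are routine.
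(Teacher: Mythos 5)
Your proof is correct and complete: it is precisely the classical Brezis--Lieb argument --- the pointwise inequality $\bigl||a+b|^q-|a|^q\bigr|\le\varepsilon|a|^q+C_\varepsilon|b|^q$, the truncation $W_n=\bigl(f_n-\varepsilon|u_n-u|^q\bigr)^+$ dominated by $(C_\varepsilon+1)|u|^q\in L^1$, dominated convergence, then $\varepsilon\to0$ --- supplemented by the standard extraction of a.e.\ convergence from the compact embeddings $H^1(B_R)\hookrightarrow\hookrightarrow L^2(B_R)$ and a correct use of the subsequence principle to pass back to the full sequence. Note that the paper itself gives no proof of this lemma (it is quoted from \cite{bogachev}), so there is no internal argument to compare against; your route is the one the standard references take, and your observation about the endpoint $q=2^*$ is apt: only a.e.\ convergence (available from the subcritical local embeddings) and $L^{2^*}$-boundedness (from the continuous Sobolev embedding) are needed, so the failure of Rellich compactness at the critical exponent is harmless. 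One discrepancy is worth flagging explicitly: what you actually prove is the classical identity $\int|u_n|^q\,dx-\int|u_n-u|^q\,dx=\int|u|^q\,dx+o(1)$, whereas the display \x{BL} as printed transposes the first two terms and is therefore off by a sign. This is a typo in the paper rather than a defect in your argument: the way the lemma is used in the proof of Lemma \ref{lm2.11}, where the difference $\int(|x|^{-\mu}\ast A|u_n|^p)B|u_n|^p\,dx-\int(|x|^{-\mu}\ast A|u_n-u|^p)B|u_n-u|^p\,dx$ is shown to converge to $+\int(|x|^{-\mu}\ast A|u|^p)B|u|^p\,dx$, confirms that the classical sign convention is the intended one. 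A last small caveat, which the paper also glosses over: for $N=1,2$ the stated range $q\in[2,2^*]$ with $2^*=+\infty$ must be read as $q\in[2,\infty)$, since $q=\infty$ is meaningless in \x{BL} (and for $N=2$ the embedding into $L^\infty$ fails); your proof covers exactly the meaningful range.
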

\begin{lemma}\label{lm2.11} Let $N\in\mathbb{N}$, $\mu\in(0, N)$, $p\in [2_{*,\mu}, 2^*_\mu]$, and $A,B\in L^\infty(\R^N,\R)$.
If $u_n\rightharpoonup u$ in $H^1(\R^N,\R)$, then
\begin{equation}\label{c2.18}
\aligned
&\int(|x|^{-\mu}\ast A|u_n-u|^p)B|u_{n}-u|^pdx-\int(|x|^{-\mu}\ast A|u_n|^p)B|u_{n}|^pdx\\
=&\int(|x|^{-\mu}\ast A|u|^p)B|u|^pdx+o(1).
\endaligned
\end{equation}
\end{lemma}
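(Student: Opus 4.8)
The plan is to lift the scalar Brezis--Lieb lemma \x{BL} through the Hardy--Littlewood--Sobolev bilinear pairing. Throughout set $w_n:=u_n-u$, so $w_n\rightharpoonup0$ in $H^1$; since bounded weakly convergent sequences converge in $L^q_{loc}$ and hence a.e. along a subsequence, I may assume $u_n\to u$ and $w_n\to0$ a.e., and because the limit I obtain below does not depend on the subsequence, the identity will hold for the whole sequence. Write $q:=\frac{2N}{2N-\mu}$, with Hölder conjugate $q'=\frac{2N}{\mu}$, and introduce the bilinear form $D(f,g):=\int(|x|^{-\mu}\ast f)g\,dx$. Applying \x{HLS1} with $s=r=q$ gives $|D(f,g)|\le C\|f\|_q\|g\|_q$, equivalently $\||x|^{-\mu}\ast f\|_{q'}\le C\|f\|_q$, so $f\mapsto|x|^{-\mu}\ast f$ is bounded from $L^q$ into $L^{q'}=(L^q)'$. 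The range $2_{*,\mu}\le p\le 2^*_\mu$ is exactly the one for which $pq=\frac{2Np}{2N-\mu}\in[2,2^*]$, so $H^1\hookrightarrow L^{pq}$ shows that the densities $A|u_n|^p,\,A|w_n|^p,\,A|u|^p$ and their $B$-analogues all lie in $L^q$ with norms bounded uniformly in $n$.

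The core of the argument, and the step I expect to be the most delicate, is a density-level Brezis--Lieb statement: $A|u_n|^p-A|u_n-u|^p\to A|u|^p$ \emph{strongly} in $L^q$, and likewise with $B$. The point is that \x{BL} only records convergence of integrals, so it must be upgraded to norm convergence of the $p$-th powers. I would start from the elementary inequality $\big||a+b|^p-|b|^p\big|\le\varepsilon|b|^p+C_\varepsilon|a|^p$ (valid for $p\ge1$ and every $\varepsilon>0$) with $a=u$, $b=w_n$, which yields $\big||u_n|^p-|w_n|^p-|u|^p\big|\le\varepsilon|w_n|^p+(C_\varepsilon+1)|u|^p$. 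Setting $G_{n,\varepsilon}:=\big(\big||u_n|^p-|w_n|^p-|u|^p\big|-\varepsilon|w_n|^p\big)^+$, one has $0\le G_{n,\varepsilon}\le(C_\varepsilon+1)|u|^p$ with $G_{n,\varepsilon}\to0$ a.e., so dominated convergence (using $|u|^{pq}\in L^1$) gives $\int G_{n,\varepsilon}^q\to0$. Combining with $\big||u_n|^p-|w_n|^p-|u|^p\big|^q\le 2^{q-1}\big(G_{n,\varepsilon}^q+\varepsilon^q|w_n|^{pq}\big)$, the uniform bound on $\|w_n\|_{pq}^{pq}$, and letting $\varepsilon\to0$, produces the claimed strong $L^q$ convergence; multiplying by the bounded function $A$ (or $B$) preserves it.

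With the densities under control, write $a_n:=A|u_n|^p,\ \alpha_n:=A|w_n|^p,\ a:=A|u|^p$ and $b_n,\beta_n,b$ for the $B$-analogues. Bilinearity of $D$ gives
\[
D(a_n,b_n)-D(\alpha_n,\beta_n)=D(a_n-\alpha_n,\,b_n-\beta_n)+D(a_n-\alpha_n,\,\beta_n)+D(\alpha_n,\,b_n-\beta_n).
\]
By the previous step $a_n-\alpha_n\to a$ and $b_n-\beta_n\to b$ strongly in $L^q$, while a.e. convergence together with uniform $L^q$-boundedness forces $\alpha_n\rightharpoonup0$ and $\beta_n\rightharpoonup0$ weakly in $L^q$. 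Since $f\mapsto|x|^{-\mu}\ast f$ is bounded $L^q\to L^{q'}$, it sends strong (resp. weak) convergence to strong (resp. weak) convergence in $L^{q'}=(L^q)'$. Hence the first term converges to $\int(|x|^{-\mu}\ast a)\,b=D(a,b)$ (strong paired with strong), the second term $=\int(|x|^{-\mu}\ast(a_n-\alpha_n))\,\beta_n$ vanishes (strong paired with weak), and the third term $=\int(|x|^{-\mu}\ast\alpha_n)\,(b_n-\beta_n)$ vanishes (weak paired with strong). Therefore $D(a_n,b_n)-D(\alpha_n,\beta_n)\to D(a,b)$, which after rearrangement is exactly \x{c2.18}, in the same sign bookkeeping as the scalar lemma \x{BL}.
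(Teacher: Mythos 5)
Your proof is correct and rests on the same engine as the paper's --- push the scalar Brezis--Lieb statement through the Hardy--Littlewood--Sobolev pairing --- but the execution differs in ways worth recording. Write $w_n=u_n-u$, $q=\frac{2N}{2N-\mu}$, and $D(f,g):=\int(|x|^{-\mu}\ast f)g\,dx$. The paper expands the difference into \emph{four} terms, $I_1=D(A(|u_n|^p-|w_n|^p),B(|u_n|^p-|w_n|^p))$, $I_2=2D(A(|u_n|^p-|w_n|^p),B|w_n|^p)$, $I_3=-D(A|u_n|^p,B|w_n|^p)$, $I_4=D(A|w_n|^p,B|u_n|^p)$; it sends $I_1$ to the limit just as you do, and then claims $I_2,I_3,I_4\to 0$ on the grounds that $|w_n|^p\rightharpoonup 0$ in $L^{q}$ while $|x|^{-\mu}\ast A|u_n|^p$ and $B|u_n|^p$ are bounded. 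For $I_3$ and $I_4$ that reasoning is not valid: a weakly null sequence paired against a merely bounded sequence need not vanish --- with $A\equiv B\equiv 1$, $u=0$, and $u_n$ a fixed bump translating to infinity, $I_3=-D(|u_n|^p,|u_n|^p)$ is a nonzero constant --- and in general only the sum $I_3+I_4$ tends to zero, which again requires inserting the strong convergence of $|u_n|^p-|w_n|^p$. Your symmetric three-term telescoping avoids this trap structurally: in each of your terms one factor converges strongly, so every term individually has a limit (strong with strong gives $D(A|u|^p,B|u|^p)$, the two mixed terms give $0$). Separately, the paper obtains the strong $L^{q}$ convergence $|u_n|^p-|w_n|^p\to|u|^p$ by citing ``the classical Brezis--Lieb lemma,'' but \x{BL} as stated only asserts convergence of integrals; the norm-level refinement is exactly what your $G_{n,\varepsilon}$ truncation argument proves, so your write-up is self-contained precisely where the paper leans on an unstated strengthening. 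Finally, you were right to hedge about ``sign bookkeeping'': as printed, \x{c2.18} (like \x{BL}) has the two left-hand integrals in the wrong order --- testing with $u_n\equiv u$ would force $D(A|u|^p,B|u|^p)=0$ --- and the identity you actually proved, $D(A|u_n|^p,B|u_n|^p)-D(A|w_n|^p,B|w_n|^p)=D(A|u|^p,B|u|^p)+o(1)$, is the corrected version that the paper in fact uses later, e.g.\ in \x{c3.16}.
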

\begin{proof} For every $n\in\mathbb{N}$, one has
\begin{equation*}
\aligned
&\int(|x|^{-\mu}\ast A|u_n|^p)B|u_{n}|^pdx-\int_{\R^N}(|x|^{-\mu}\ast A|u_n-u|^p)B|u_{n}-u|^pdx\\
=&\int(|x|^{-\mu}\ast A(|u_n|^p-|u_n-u|^p))B(|u_n|^p-|u_n-u|^p)dx \\
&+2\int(|x|^{-\mu}\ast A(|u_n|^p-|u_n-u|^p))B|u_n-u|^pdx\\
&-\int(|x|^{-\mu}\ast A|u_n|^p)B|u_n-u|^pdx\\
&+\int(|x|^{-\mu}\ast A|u_n-u|^p)B|u_n|^pdx\\
:=&I_1+I_2+I_3+I_4.
\endaligned
\end{equation*}
By the classical Brezis-Lieb lemma with $q = \frac{2Np}{2N-\mu}$, we have $|u_n - u|^p - |u_n|^p \to |u|^p$, strongly
in $L^{\frac{2N}{2N-\mu}}(\R^N)$ as $n \to\infty$.
Then, Hardy-Littlewood-Sobolev inequality implies
that $$|x|^{-\mu}\ast A(|u_n - u|^p - |u_n|^p)\to |x|^{-\mu}\ast A|u|^p~~\text{in}~L^{\frac{2N}{\mu}}(\R^N,\R)~\text{as}~n \to\infty.$$ Thus
$$I_1\to \int(|x|^{-\mu}\ast A|u|^p)B|u|^pdx~\text{as}~n \to\infty.$$
On the other hand, $I_2$, $I_3$ and $I_4$ both converge to $0$ since that $|u_n - u|^p\rightharpoonup0$ weakly in $L^{\frac{2N}{2N-\mu}}(\R^N,\R)$, $|x|^{-\mu}\ast A|u_n|^p$ and $B|u_n|^p$ are bounded in $L^{\frac{2N}{\mu}}(\R^N,\R)$.
Thus, (\ref{c2.18}) holds.
\end{proof}
 \vskip4mm
{\subsection{The proof of Theorem \ref{th1.1} }}
In this section, we prove Theorem \ref{th1.1} under the conditions $(A_1)$-$(A_2)$ and $p\in(2_{*,\mu}, \bar{p})$. Since $A(x) \equiv A_\infty$ satisfies $(A_1)$-$(A_2)$, all the following conclusions on $I$ are also true for $I_\infty$.

For $u \in \mathcal{S}(c)$, set $u^s(x) = s^{\frac{N}{2}} u(sx)$ $\forall s> 0$. Then
$$\|u^s\|_2^2=\|u\|_2^2=c, \quad \|\nabla u^s\|_2^2=s^2\|\nabla u\|_2^2,$$
\begin{equation}\label{I2}
I(u^s)=\frac{1}{2}s^2\|\nabla u\|_2^2-\frac{1}{2p}s^{Np-2N+\mu}\int\int\frac{ A(s^{-1}x)|u(x)|^p A(s^{-1}y) |u(y)|^p}{|x-y|^\mu}dxdy.
\end{equation}
and
\begin{equation}\label{J}
\aligned
J(u)=&\frac{dI(u^s)}{ds}|_{s=1}\\
=&\|\nabla u\|_2^2-\frac{1}{2p}\int\int\frac{ \left[(Np-2N+\mu)A(x)-2\nabla A(x)\cdot x\right]A(y)|u(x)|^p  |u(y)|^p}{|x-y|^\mu}dxdy.
\endaligned
\end{equation}

\begin{lemma}\label{lm3.1} For any $c > 0$, $\sigma(c) =
\inf\limits_{u\in \mathcal{S}(c)}I(u)$ is well defined and $\sigma(c) < 0$.
\end{lemma}
\begin{proof}
By the Gagliardo-Nirenberg inequality
\begin{equation}\label{GN2}
\|u\|_q\leq C(N,q)\|\nabla u\|_2^{\frac{N(q-2)}{2q}}\|u\|_2^{1-\frac{N(q-2)}{2q}}~\forall q\in(2,2^*),
\end{equation}
Hardy--Littlewood--Sobolev inequality and $(A_1)$, for $u\in \mathcal{S}(c)$ we have
\begin{equation}\label{c3.1}
I(u)\geq\frac{1}{2}\|\nabla u\|_2^2-C(N,\mu)\|u\|_{\frac{2Np}{2N-\mu}}^{2p}\geq \frac{1}{2}\|\nabla u\|_2^2-C(N,\mu,p)c^{\frac{2N-\mu-(N-2)p}{2}}\|\nabla u\|^{Np-2N+\mu}_{2}.
\end{equation}
Since $$p<\bar{p}\Rightarrow Np-2N+\mu< 2,$$
thus $I$ is bounded from below on $\mathcal{S}(c)$ for any $c > 0$, and $\sigma(c)$ is well defined. For any $c> 0$, we can choose a function $u_0\in\mathcal{C}^\infty_0(\R^N, [-M, M])$ satisfying $\|u_0\|_2^2 = c$ for
some constant $M> 0$. Then it follows from $(A_1)$ and $(\ref{I2})$ that
\begin{equation}\label{c3.3}
I({u_0}^{t})\leq \frac{t^2}{2}\|\nabla u_0\|_2^2-\frac{A_\infty^2t^{Np-2N+\mu}}{2p}\int\int\frac{ |u_0(x)|^p |u_0(y)|^p}{|x-y|^\mu}dxdy,\quad \forall t\in(0,1].
\end{equation}
Since $0 < Np-2N+\mu < 2$, $(\ref{c3.3})$ implies that $I({u_0}^{t}) < 0$ for small
$t\in(0, 1)$. Jointly with the fact that $\|{u_0}^t\|_2 = \|u_0\|_2$, we obtain
 $$\sigma(c) \leq\inf\limits_{t\in(0,1]} I({u_0}^t) < 0.$$
\end{proof}

\begin{lemma}\label{lm3.2} $\sigma(c)$ is continuous on $(0, +\infty)$.
\end{lemma}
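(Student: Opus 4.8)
The plan is to establish continuity by a two-sided mass-rescaling argument, exploiting that the pure dilation $u \mapsto (c'/c)^{1/2} u$ carries $\mathcal{S}(c)$ onto $\mathcal{S}(c')$ and changes the energy in a way that is continuous in the ratio $c'/c$. Fix $c>0$ and let $c_n \to c$; since $c>0$ we may assume $c_n \in [c/2, 2c]$ for all large $n$. For $v \in \mathcal{S}(\gamma)$ we have $\theta v \in \mathcal{S}(\theta^2 \gamma)$ and
\begin{equation}\label{scalenergy}
I(\theta v)=\frac{\theta^2}{2}\|\nabla v\|_2^2-\frac{\theta^{2p}}{2p}\int\int\frac{A(x)|v(x)|^pA(y)|v(y)|^p}{|x-y|^\mu}\,dxdy.
\end{equation}

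First I would prove $\limsup_{n}\sigma(c_n)\le\sigma(c)$. Given $\varepsilon>0$, choose $u\in\mathcal{S}(c)$ with $I(u)<\sigma(c)+\varepsilon$, and set $u_n:=(c_n/c)^{1/2}u\in\mathcal{S}(c_n)$. Because $c_n\to c$ we have $\|u_n-u\|_{H^1}=|(c_n/c)^{1/2}-1|\,\|u\|_{H^1}\to0$, so by $I\in C^1(H^1(\R^N,\R),\R)$ we get $I(u_n)\to I(u)$. Hence $\sigma(c_n)\le I(u_n)$ yields $\limsup_n\sigma(c_n)\le I(u)<\sigma(c)+\varepsilon$, and letting $\varepsilon\to0$ gives the claim.

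For the reverse inequality $\liminf_n\sigma(c_n)\ge\sigma(c)$, for each $n$ pick $u_n\in\mathcal{S}(c_n)$ with $I(u_n)<\sigma(c_n)+1/n$. Since the first step gives an upper bound on $\sigma(c_n)$, the coercivity estimate \eqref{c3.1} together with $c_n\in[c/2,2c]$ and $Np-2N+\mu<2$ shows that $\{\|\nabla u_n\|_2\}$ is bounded, and then \eqref{HLS1} and \eqref{GN2} bound the nonlocal term $D(u_n):=\int\int\frac{A(x)|u_n|^pA(y)|u_n|^p}{|x-y|^\mu}dxdy$ as well. Rescaling back, $\hat u_n:=(c/c_n)^{1/2}u_n\in\mathcal{S}(c)$, so by \eqref{scalenergy},
\begin{equation}\label{diffen}
\sigma(c)\le I(\hat u_n)=I(u_n)+\tfrac12\big((c/c_n)-1\big)\|\nabla u_n\|_2^2-\tfrac1{2p}\big((c/c_n)^p-1\big)D(u_n).
\end{equation}
Since $c/c_n\to1$ while $\|\nabla u_n\|_2^2$ and $D(u_n)$ are bounded, the last two terms are $o(1)$, whence $\sigma(c)\le I(u_n)+o(1)<\sigma(c_n)+1/n+o(1)$; taking $\liminf_n$ gives $\sigma(c)\le\liminf_n\sigma(c_n)$. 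Combining the two directions yields $\sigma(c_n)\to\sigma(c)$.

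The step I expect to be the main obstacle is the lower bound: it hinges on making the rescaling error in \eqref{diffen} vanish, which requires uniform bounds on $\|\nabla u_n\|_2$ and on the nonlocal term along the almost-minimizing sequences. These bounds are exactly what the subcritical condition $Np-2N+\mu<2$ (equivalently $p<\bar p$) and the coercivity \eqref{c3.1} provide, and the assumption $c>0$ (so that $c_n$ stays in a compact subinterval of $(0,\infty)$) guarantees the needed uniformity.
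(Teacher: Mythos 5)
Your proposal is correct and takes essentially the same route as the paper: both directions use the mass rescaling $v \mapsto \sqrt{c'/c}\,v$ between $\mathcal{S}(c)$ and $\mathcal{S}(c_n)$, with the coercivity estimate (\ref{c3.1}) (valid since $Np-2N+\mu<2$) supplying the uniform bounds on $\|\nabla u_n\|_2$ and the nonlocal term that make the rescaling error $o(1)$. The only cosmetic differences are that the paper gets boundedness of the almost-minimizers from $\sigma(c_n)<0$ (Lemma \ref{lm3.1}) rather than from your Step 1, and uses a minimizing sequence instead of a single near-minimizer in the easy direction.
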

\begin{proof}
For any $c > 0$, let $c_n > 0$ and $c_n \to c$. For every $n\in\N$, let $u_n\in\mathcal{S}(c_n)$ such
that $I(u_n) < \sigma(c_n) + \frac{1}{n} < \frac{1}{n}$. Then $(\ref{c3.1})$ implies that $\{u_n\}$ is bounded in $H^1(\R^N,\R)$.
Moreover, we have
\begin{equation}\label{c3.4}
\begin{aligned} \sigma(c) & \leq I\left(\sqrt{\frac{c}{c_{n}}} u_{n}\right) \\
&=\frac{c}{2 c_{n}}\left\|\nabla u_{n}\right\|_{2}^{2}-\frac{c^{p}}{2pc_n^{p}}\int\int\frac{ A(x)|u_{n}(x)|^p A(y)|u_{n}(y)|^p}{|x-y|^\mu}dxdy \\
&=I\left(u_{n}\right)+o(1) \leq \sigma\left(c_{n}\right)+o(1). \end{aligned}
\end{equation}
On the other hand, given a minimizing sequence $\{v_n\}\subset\mathcal{S}(c)$ for $I$, we have
$$
\sigma\left(c_{n}\right) \leq I\left(\sqrt{\frac{c_{n}}{c}} v_{n}\right) \leq I\left(v_{n}\right)+o(1)=\sigma(c)+o(1),
$$
which together with $(\ref{c3.4})$, implies that $\lim\limits_{n\to+\infty} \sigma(c_n) = \sigma(c)$.
\end{proof}

From \cite{MR778970,MR778974}, we know that subadditivity inequality implies the compactness of the minimizing sequence for $\sigma(c)$ (up to translations). Although $I$ is not invariant by translations, by using the following subadditivity inequality and comparing with the limit equation we can still verify that $\sigma(c)$ has a minimizer.
\begin{lemma}\label{lm3.3} For each $c > 0$,
\begin{equation}\label{c3.5}
\sigma(c)<\sigma(\alpha)+\sigma(c-\alpha), \quad \forall 0<\alpha<c.
\end{equation}
\end{lemma}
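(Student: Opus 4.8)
The plan is to reduce the whole statement to the single scaling inequality
\begin{equation}
\sigma(\theta c)<\theta\,\sigma(c)\qquad\text{for all }c>0\text{ and }\theta>1.\tag{$\star$}
\end{equation}
Granting $(\star)$, the subadditivity is purely algebraic: given $0<\alpha<c$, I would apply $(\star)$ once with base mass $\alpha$ and ratio $c/\alpha>1$, and once with base mass $c-\alpha$ and ratio $c/(c-\alpha)>1$, obtaining $\tfrac{\alpha}{c}\sigma(c)<\sigma(\alpha)$ and $\tfrac{c-\alpha}{c}\sigma(c)<\sigma(c-\alpha)$; adding these two inequalities gives exactly $\sigma(c)<\sigma(\alpha)+\sigma(c-\alpha)$.

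So the real work is $(\star)$, which I would establish through the scaling $u_s:=s^{\varrho}u(x/s)$. Fix $\theta>1$ and set $s:=\theta^{1/(2\varrho+N)}>1$; then $\|u_s\|_2^2=s^{2\varrho+N}\|u\|_2^2=\theta c$, so $u_s\in\mathcal{S}(\theta c)$ whenever $u\in\mathcal{S}(c)$. A change of variables gives $\|\nabla u_s\|_2^2=s^{2\varrho+N-2}\|\nabla u\|_2^2$ and, writing $D(u):=\int\!\!\int\frac{A(x)|u(x)|^pA(y)|u(y)|^p}{|x-y|^\mu}\,dx\,dy$ for the nonlocal term,
\[
D(u_s)=s^{2\varrho p+2N-\mu}\int\!\!\int\frac{A(s\xi)|u(\xi)|^pA(s\eta)|u(\eta)|^p}{|\xi-\eta|^\mu}\,d\xi\,d\eta.
\]
The key point is that the exponent in $(A_2)$ is tuned precisely so that the nonlocal term scales at least as fast as the mass: since $\varrho p+N-\tfrac{\mu}{2}=\tfrac{N-\mu+2\varrho(p-1)}{2}+\big(\varrho+\tfrac{N}{2}\big)$, and since $t\mapsto t^{\frac{N-\mu+2\varrho(p-1)}{2}}A(tx)$ is nondecreasing with value $A(\xi)$ at $t=1$, we get $s^{\varrho p+N-\mu/2}A(s\xi)\geq s^{\varrho+N/2}A(\xi)$ for $s>1$; using $A\geq A_\infty>0$ from $(A_1)$, this yields the pointwise bound $s^{2\varrho p+2N-\mu}A(s\xi)A(s\eta)\geq s^{2\varrho+N}A(\xi)A(\eta)$, hence $D(u_s)\geq s^{2\varrho+N}D(u)=\theta\,D(u)$. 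Plugging in,
\[
I(u_s)\leq\tfrac12 s^{2\varrho+N-2}\|\nabla u\|_2^2-\tfrac{1}{2p}\theta\,D(u)=\theta\,I(u)-\tfrac12 s^{2\varrho+N-2}(s^2-1)\|\nabla u\|_2^2,
\]
and the last term is strictly negative because $s>1$ and $\|\nabla u\|_2^2>0$ for every $u\in\mathcal{S}(c)$.

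The main obstacle is upgrading this to the strict inequality $(\star)$ after passing to the infimum, since a priori the gap $\tfrac12 s^{2\varrho+N-2}(s^2-1)\|\nabla u\|_2^2$ might collapse along a minimizing sequence. I would remove this by exhibiting a uniform lower bound $\|\nabla u\|_2^2\geq\delta_0(c)>0$ valid for all good competitors: by Lemma~\ref{lm3.1} we have $\sigma(c)<0$, so any $u\in\mathcal{S}(c)$ with $I(u)<\tfrac12\sigma(c)$ satisfies $\tfrac{1}{2p}D(u)>\tfrac12\|\nabla u\|_2^2-\tfrac12\sigma(c)\geq\tfrac12|\sigma(c)|$, whence $D(u)>p|\sigma(c)|>0$; combined with the Gagliardo--Nirenberg and Hardy--Littlewood--Sobolev bound $D(u)\leq C\|\nabla u\|_2^{Np-2N+\mu}$ underlying $(\ref{c3.1})$, this forces $\|\nabla u\|_2^2\geq\delta_0(c)>0$. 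Choosing $u\in\mathcal{S}(c)$ with $I(u)<\min\{\tfrac12\sigma(c),\,\sigma(c)+\epsilon\}$ and letting $\epsilon\to0$ then gives $\sigma(\theta c)\leq\theta\,\sigma(c)-\tfrac12 s^{2\varrho+N-2}(s^2-1)\delta_0(c)<\theta\,\sigma(c)$, which is $(\star)$ and completes the proof.
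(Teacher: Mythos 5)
Your proof is correct and takes essentially the same route as the paper: the same scaling $u_s=s^{\varrho}u(x/s)$ together with $(A_2)$ and $A\geq A_\infty>0$ yields the homogeneity inequality $\sigma(\theta c)<\theta\,\sigma(c)$ for $\theta>1$, followed by the identical algebraic splitting $\sigma(c)=\tfrac{\alpha}{c}\sigma(c)+\tfrac{c-\alpha}{c}\sigma(c)$. The only cosmetic difference is how strictness survives the infimum: you derive a quantitative lower bound $\|\nabla u\|_2^2\geq\delta_0(c)>0$ for near-minimizers from $D(u)>p|\sigma(c)|$ and the Gagliardo--Nirenberg/Hardy--Littlewood--Sobolev estimate, while the paper gets the same gradient bound along a minimizing sequence by contradiction with $\sigma(c)<0$.
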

\begin{proof}
Letting $\{u_n\}\subset\mathcal{S}(c)$ be such that $I(u_n)\to\sigma(c)$, it follows from $(\ref{c3.1})$ and
Lemma \ref{lm3.1} that $\sigma(c) < 0$, and $\{u_n\}$ is bounded in $H^1(\R^N,\R)$. Now, we claim that there
exists a constant $\rho_0 > 0$ such that
\begin{equation}\label{c3.6}
\liminf _{n \rightarrow \infty}\left\|\nabla u_{n}\right\|_{2}>\rho_{0}.
\end{equation}
Otherwise, if $(\ref{c3.6})$ is not true, then up to a subsequence, $\|\nabla u_n\|_2 \to 0$, and so $(\ref{c3.1})$
yields $$0 > \sigma(c) = \lim\limits_{n\to+\infty} I(u_n) = 0.$$
This contradiction shows that $(\ref{c3.6})$ holds.

Let ${u_n}_t=t^\varrho u_n(x/t)$ $\forall t>0$, the constant $\varrho$ is given in the condition $(A_2)$.
Then by $(A_2)$, we have
\begin{equation}\label{c3.7}
\begin{aligned}
I\left({u_n}_t\right)&=I\left(t^\varrho u_{n}(x/t)\right)  \\
&=\frac{t^{2 \varrho+N-2}}{2}\left\|\nabla u_{n}\right\|_{2}^{2}-\frac{t^{2N-\mu+2\varrho p}}{2p} \int\int\frac{A(tx)\left|u_{n}(x)\right|^pA(ty)\left|u_{n}(y)\right|^p}{|x-y|^\mu} dxdy \\
&\leq \frac{t^{2 \varrho+N-2}}{2}\left\|\nabla u_{n}\right\|_{2}^{2}-\frac{t^{2 \varrho+N}}{2p} \int\int\frac{A(x)\left|u_{n}(x)\right|^pA(y)\left|u_{n}(y)\right|^p}{|x-y|^\mu} dxdy \\
&=t^{2 \varrho+N} I\left(u_{n}\right)+\frac{t^{2 \rho+N}\left(t^{-2}-1\right)}{2}\left\|\nabla u_{n}\right\|_{2}^{2}, \quad \forall t>1.
\end{aligned}
\end{equation}
Since $\|{u_n}_t\|_2^2 = t^{2\varrho+N}\|u_{n}\|_2^2 =t^{2\varrho+N}c$ for all $t > 0$, then it follows from $(\ref{c3.6})$ and $(\ref{c3.7})$
that
\begin{equation*}
\begin{aligned} \sigma\left(t^{2\varrho+N} c\right) & \leq I\left({u_n}_{t}\right) \leq t^{2 \varrho+N} \sigma(c)+\frac{t^{2 \varrho+N}\left(t^{-2}-1\right)}{2} \rho_{0}^{2}+o(1), \quad \forall t>1, \end{aligned}
\end{equation*}
which implies
\begin{equation}\label{c3.8}
\sigma(tc) < t\sigma(c),~~\forall t > 1.
\end{equation}
Moreover, it follows from $(\ref{c3.8})$ that
$$
\sigma(c)=\frac{\alpha}{c} \sigma(c)+\frac{c-\alpha}{c} \sigma(c)<\sigma(\alpha)+\sigma(c-\alpha), \quad \forall 0<\alpha<c.
$$
This completes the proof.
\end{proof}

\begin{lemma}\label{lm3.4} $\sigma(c) \leq\sigma_\infty(c)$ for any $c > 0$.
\end{lemma}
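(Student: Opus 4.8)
The plan is to reduce the comparison of the two infima to a pointwise comparison of the functionals themselves: I claim that $I(u)\le I_\infty(u)$ for \emph{every} $u\in H^1(\R^N,\R)$, after which the conclusion follows immediately by passing to infima over $\mathcal{S}(c)$. Since the kinetic terms $\frac12\|\nabla u\|_2^2$ in $I$ and $I_\infty$ coincide, the entire comparison is localized in the nonlocal (potential) terms, and it is there that I will use condition $(A_1)$.

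For the nonlocal terms I would exploit $(A_1)$ in the strong form $A(x)\ge A_\infty>0$ for all $x\in\R^N$, which gives $A(x)A(y)\ge A_\infty^2$ for every $(x,y)\in\R^N\times\R^N$. Because the remaining factor
$$\frac{|u(x)|^p|u(y)|^p}{|x-y|^\mu}\ge0$$
is nonnegative, the coefficient inequality survives integration:
$$\int\int\frac{A(x)|u(x)|^pA(y)|u(y)|^p}{|x-y|^\mu}\,dx\,dy\ge A_\infty^2\int\int\frac{|u(x)|^p|u(y)|^p}{|x-y|^\mu}\,dx\,dy.$$
Both double integrals are finite for $u\in H^1(\R^N,\R)$ and $p\in[2_{*,\mu},2^*_\mu]$ by the Hardy--Littlewood--Sobolev inequality together with the boundedness of $A$, so the step is legitimate. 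Inserting this bound into the definitions $(\ref{I})$ of $I$ and $(\ref{c1.13})$ of $I_\infty$, and recalling the negative prefactor $-\frac{1}{2p}$ in front of the nonlocal term, reverses the inequality and yields $I(u)\le I_\infty(u)$ for every $u\in H^1(\R^N,\R)$.

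Finally, for any $u\in\mathcal{S}(c)$ we then have $\sigma(c)\le I(u)\le I_\infty(u)$, and taking the infimum over $u\in\mathcal{S}(c)$ on the right-hand side gives $\sigma(c)\le\sigma_\infty(c)$, as desired. There is essentially no analytic difficulty here; the only point deserving a moment's care is confirming that nonnegativity of the integrand $|u(x)|^p|u(y)|^p/|x-y|^\mu$, combined with finiteness of the integrals, genuinely lets the pointwise bound $A(x)A(y)\ge A_\infty^2$ pass to the integral level. This is precisely where $(A_1)$ is used in its essential form $A\ge A_\infty$, rather than merely $A\to A_\infty$ at infinity.
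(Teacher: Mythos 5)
Your proof is correct and is essentially the same argument as the paper's: both rest on the pointwise bound $A(x)A(y)\ge A_\infty^2$ from $(A_1)$, which (with the negative prefactor $-\frac{1}{2p}$) gives $I(u)\le I_\infty(u)$ on all of $H^1(\R^N,\R)$, and then the inequality of infima follows; the paper merely phrases the last step via a minimizing sequence for $\sigma_\infty(c)$ instead of taking the infimum directly. No gaps.
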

\begin{proof}
Let $c > 0$ be given and let $\{u_n\}\subset\mathcal{S}(c)$ be such that $I_\infty(u_n)\to \sigma_\infty(c)$. Since
$A_\infty\leq A(x)$ for all $x\in\R^N$, it follows from $(\ref{I})$ that
$$\sigma(c) \leq I(u_n) \leq I_\infty(u_n) = \sigma_\infty(c) + o(1),$$
which implies that $\sigma(c)\leq\sigma_\infty(c)$ for any $c > 0$.
\end{proof}

\begin{lemma}\label{lm3.5} For each $c > 0$, $\sigma(c)$ has a minimizer.
\end{lemma}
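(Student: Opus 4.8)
The plan is to run the concentration--compactness philosophy through a Brezis--Lieb splitting, using the strict subadditivity of Lemma~\ref{lm3.3} to forbid a nontrivial splitting of the mass and the comparison with the limit functional $I_\infty$ to forbid the escape of mass to infinity. First I take a minimizing sequence $\{u_n\}\subset\mathcal{S}(c)$, $I(u_n)\to\sigma(c)$; by the coercivity estimate \eqref{c3.1} it is bounded in $H^1(\R^N,\R)$, so along a subsequence $u_n\rightharpoonup u$ and $u_n\to u$ a.e. Writing $v_n:=u_n-u\rightharpoonup0$ and $m:=\|u\|_2^2\in[0,c]$, the Brezis--Lieb lemma gives $\|v_n\|_2^2\to c-m$ and $\|\nabla u_n\|_2^2=\|\nabla u\|_2^2+\|\nabla v_n\|_2^2+o(1)$, while Lemma~\ref{lm2.11} (with $A=B$) splits the nonlocal term; together these produce the energy decomposition $I(u_n)=I(u)+I(v_n)+o(1)$.

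The technical heart is to recognize that $v_n$ carries only \emph{limit} energy. Since $v_n\rightharpoonup0$ it converges to $0$ strongly in $L^q_{loc}$ for $q\in(2,2^*)$, whereas $A(x)-A_\infty\to0$ as $|x|\to\infty$ by $(A_1)$; cutting the integral at a large ball and its complement shows $(A-A_\infty)|v_n|^p\to0$ in $L^{\frac{2N}{2N-\mu}}(\R^N)$, so the Hardy--Littlewood--Sobolev inequality forces the nonlocal terms of $I$ and $I_\infty$ at $v_n$ to differ by $o(1)$, i.e. $I(v_n)=I_\infty(v_n)+o(1)$. Rescaling $v_n$ onto $\mathcal{S}(c-m)$ then gives $\liminf_n I(v_n)\ge\sigma_\infty(c-m)$, and hence, with the conventions $\sigma(0)=\sigma_\infty(0)=0$,
\[\sigma(c)\ \ge\ I(u)+\sigma_\infty(c-m)\ \ge\ \sigma(m)+\sigma_\infty(c-m).\]

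It remains to prove $m=c$. If $0<m<c$, then Lemma~\ref{lm3.4} and the strict subadditivity of Lemma~\ref{lm3.3} give $\sigma(m)+\sigma_\infty(c-m)\ge\sigma(m)+\sigma(c-m)>\sigma(c)$, contradicting the previous display. The alternative $m=0$ means all the mass escapes to infinity and forces $\sigma(c)\ge\sigma_\infty(c)$, which together with Lemma~\ref{lm3.4} would give $\sigma(c)=\sigma_\infty(c)$; I exclude this by establishing the strict inequality $\sigma(c)<\sigma_\infty(c)$ when $A\not\equiv A_\infty$. Indeed, let $w>0$ be a minimizer of $\sigma_\infty(c)$ (which exists by Ye's theorem for the autonomous problem $A\equiv A_\infty$). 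Since $A\ge A_\infty>0$ with strict inequality on a set of positive measure and $w>0$, one has $\int\!\int\frac{A(x)|w(x)|^pA(y)|w(y)|^p}{|x-y|^\mu}\,dxdy>A_\infty^2\int\!\int\frac{|w(x)|^p|w(y)|^p}{|x-y|^\mu}\,dxdy$, whence $\sigma(c)\le I(w)<I_\infty(w)=\sigma_\infty(c)$. (When $A\equiv A_\infty$ the functional is translation invariant and the existence of a minimizer is exactly Ye's result, so there is nothing to add.) Thus $m=c$.

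Finally $m=c$ yields $u_n\to u$ in $L^2(\R^N)$ (weak $H^1$-convergence together with equal $L^2$-norms) and $I(u)=\sigma(c)$, so the decomposition gives $I(v_n)\to0$; since $\|v_n\|_2\to0$, the Gagliardo--Nirenberg bound inside \eqref{c3.1} drives the nonlocal part of $I(v_n)$ to $0$ and leaves $\tfrac12\|\nabla v_n\|_2^2\to0$, so that $u_n\to u$ strongly in $H^1(\R^N,\R)$ and $u\in\mathcal{S}(c)$ is the sought minimizer (this strong convergence of an arbitrary minimizing sequence is also what delivers the orbital stability of Theorem~\ref{th1.1.5}). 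The main obstacle is precisely the $m=0$ alternative: ruling out the loss of mass at infinity is where the comparison with the limit equation and the strict inequality $\sigma(c)<\sigma_\infty(c)$ are indispensable, and it is the only place where the sign condition $A\ge A_\infty$ (beyond $A\to A_\infty$) is genuinely used.
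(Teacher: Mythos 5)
Your proof is correct, but it resolves the decisive difficulty --- the possibility that the weak limit of the minimizing sequence is zero --- by a genuinely different mechanism than the paper. Both arguments share the same skeleton: boundedness from \eqref{c3.1}, the nonlocal Brezis--Lieb splitting of Lemma~\ref{lm2.11}, strict subadditivity (Lemma~\ref{lm3.3}) and the comparison $\sigma\le\sigma_\infty$ (Lemma~\ref{lm3.4}); and your treatment of the dichotomy case $0<m<c$ is essentially the paper's Case (ii). The divergence is at $m=0$, i.e.\ $\bar u=0$. The paper does \emph{not} exclude this case: it shows that then $I(u_n)=I_\infty(u_n)+o(1)$ (its \x{c3.10}), rules out vanishing using $\sigma(c)<0$ (Lemma~\ref{lm3.1}), translates the sequence, and reruns the Brezis--Lieb/subadditivity argument for $I_\infty$ (legitimate because $A\equiv A_\infty$ satisfies $(A_1)$--$(A_2)$, so Lemmas~\ref{lm3.1}--\ref{lm3.4} hold verbatim for $I_\infty$) to conclude that the translated weak limit $\hat u$ has full mass and satisfies $I(\hat u)=\sigma(c)$; thus a minimizer is produced even if all mass runs off to infinity. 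You instead exclude $m=0$ outright by proving the strict binding inequality $\sigma(c)<\sigma_\infty(c)$ when $A\not\equiv A_\infty$, testing $I$ against a positive minimizer $w$ of $\sigma_\infty(c)$, and you delegate the residual case $A\equiv A_\infty$ entirely to Ye's autonomous result \cite{MR3642765}. Your route buys a strictly stronger conclusion when $A\not\equiv A_\infty$ --- \emph{every} minimizing sequence converges in $H^1$ without translations, which is the form of compactness relevant for stability --- but at the price of two external ingredients the paper never needs: the existence of a minimizer for the limit problem (Ye) and its positivity (this last point you assert rather than prove; it is standard --- replace $w$ by $|w|$ and use the strong maximum principle with $\lambda_c<0$, or simply translate $w$ so that its support meets the open set $\{A>A_\infty\}$ in positive measure --- but it is a step you should have flagged). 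The paper's route is self-contained and uniform in $A$; yours is shorter where it applies and isolates the precise reason mass cannot escape to infinity.
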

\begin{proof}
In view of Lemma \ref{lm3.1}, we have $\sigma(c) < 0$. Let $\{u_n\}\subset\mathcal{S}(c)$ be such that
$I(u_n)\to\sigma(c)$. Then $(\ref{c3.1})$ implies that $\{u_n\}$ is bounded in $H^1(\R^N,\R)$. We then may
assume that for some $\bar{u}\in H^1(\R^N,\R)$ such that up to a subsequence, $u_n\rightharpoonup\bar{u}$ in $H^1(\R^N,\R)$.

Case (i): $\bar{u}= 0$. Then $u_n \to 0$ in $L^s_{\text{loc}}(\R^N,\R)$ for $1\leq s < 2^*$
and $u_n \to 0$ a.e. in $\R^N$. By $(A_1)$, it is easy to check that
\begin{equation}\label{c3.10}
\int\int\frac{\left(A_\infty^2-A(x)A(y)\right)|u_n(x)|^p|u_n(y)|^p}{|x-y|^\mu}dxdy\to0\quad\text{as}~n\to\infty.
\end{equation}
Then (\ref{I}), (\ref{c1.13}), and (\ref{c3.10}) imply
\begin{equation}\label{c3.11}
I_\infty(u_n)\to\sigma(c)\quad\text{as}~n\to\infty.
\end{equation}
Next, we show that
\begin{equation}\label{c3.12}
 \delta:= \limsup\limits_{n\to+\infty}
\sup\limits_{y\in\R^N}\int_{B_1(y)} |u_n|^2dx > 0.
\end{equation}
In fact, if $\delta=0$, by Lions' concentration compactness principle \cite{MR778970,MR778974},
we have $u_n \to 0$ in $L^q(\R^N,\R)$ for $2 < q < 2^*$, and so $(A_1)$ and $(A_2)$ imply that
$$\int\int\frac{A(x)A(y)|u_n(x)|^p|u_n(y)|^p}{|x-y|^\mu}dxdy\to0\quad\text{as}~n\to\infty.$$
 Then by (\ref{I}), we have
 $$0 > \sigma(c) = \lim\limits_{n\to+\infty} I(u_n) = \lim\limits_{n\to+\infty}\frac{1}{2}\|u_n\|_2^2 \geq 0,$$
which is impossible. Hence, we have $\delta> 0$, and there exists a sequence $\{y_n\}\subset\R^N$
such that
\begin{equation}\label{c3.13}
\int_{B_1(y_n)} |u_n|^2dx >\frac{\delta}{2}.
\end{equation}
Let $\hat{u}_n(x) = u_n(x + y_n)$. Then (\ref{c3.11}) leads to
\begin{equation}\label{c3.14}
\hat{u}_n \in \mathcal{S}(c),\quad I_\infty(\hat{u}_n) \to \sigma(c).
\end{equation}
In view of (\ref{c3.13}), we may assume that there exists $\hat{u}\in H^1(\R^N,\R)\setminus\{0\}$ such that, passing
to a subsequence,
\begin{equation}\label{c3.15}
\hat{u}_n\rightharpoonup \hat{u}~\text{in}~H^1(\R^N,\R), \quad\hat{u}_n \to \hat{u}~\text{in}~L^q_{\text{loc}}(\R^N,\R)~\forall q\in [1, 2^*), \quad
 \hat{u}_n\to \hat{u}~a.e.~\text{in}~\R^N.
\end{equation}
Then it follows from (\ref{c3.14}), (\ref{c3.15}), Lemmas \ref{lm3.2}, \ref{lm3.4} and \ref{lm2.11} that
\begin{equation}\label{c3.16}
\aligned
\sigma_\infty(c)\geq&\sigma(c)=\lim\limits_{n\to+\infty}I_\infty(\hat{u}_n)=I_\infty(\hat{u})+\lim\limits_{n\to+\infty}I_\infty(\hat{u}_n-\hat{u})\\
\geq&\sigma_\infty(\|\hat{u}\|_2^2)+\lim\limits_{n\to+\infty}\sigma_\infty(\|\hat{u}_n-\hat{u}\|_2^2)=\sigma_\infty(\|\hat{u}\|_2^2)+\sigma_\infty(c-\|\hat{u}\|_2^2).
\endaligned
\end{equation}
If $\|\hat{u}\|_2^2 < c$, then (\ref{c3.16}) and Lemma \ref{lm3.3} imply
 $$\sigma_\infty(c) \geq\sigma_\infty(\|\hat{u}\|_2^2)+\sigma_\infty(c-\|\hat{u}\|_2^2) >\sigma_\infty(c),$$
which is impossible. This shows $\|\hat{u}\|_2^2  = c$. Then we have $\hat{u}_n \to\hat{u}$ in $L^q(\R^N,\R)$
for $2 \leq q < 2^*$. From this, the weak semicontinuity of norm and (\ref{c3.16}), we derive
$$\sigma(c) = \lim\limits_{n\to+\infty} I(\hat{u}_n)\geq I(\hat{u}) \geq\sigma(c),$$
which leads to $\sigma(c) = I(\hat{u})$. Hence, $\hat{u}$ is a minimizer of $\sigma(c)$ for any $c > 0$.

Case (ii): $\bar{u}\neq0$. Then $u_n \to \bar{u}$ in $L^q_{\text{loc}}(\R^N,\R)$ for $1\leq q< 2^*$
and $u_n \to \bar{u}$ a.e. in $\R^N$. By Lemmas \ref{lm3.2} and \ref{lm2.11}, we have
\begin{equation}\label{c3.9}
\aligned
\sigma(c)=&\lim\limits_{n\to+\infty}I(\bar{u}_n)=I(\bar{u})+\lim\limits_{n\to+\infty}I(\bar{u}_n-\bar{u})\\
\geq&\sigma(\|\bar{u}\|_2^2)+\lim\limits_{n\to+\infty}\sigma(\|\bar{u}_n-\bar{u}\|_2^2)=\sigma(\|\bar{u}\|_2^2)+\sigma(c-\|\bar{u}\|_2^2).
\endaligned
\end{equation}
If $\|\bar{u}\|_2^2 < c$, then (\ref{c3.9}) and Lemma \ref{lm3.3} imply
 $$\sigma(c) \geq\sigma(\|\bar{u}\|_2^2)+\sigma(c-\|\bar{u}\|_2^2) >\sigma(c),$$
which is impossible. This shows $\|\bar{u}\|_2^2  = c$. Then we have $\bar{u}_n \to\bar{u}$ in $L^q(\R^N,\R)$
for $2 \leq q < 2^*$. From this, the weak semicontinuity of norm and (\ref{c3.16}), we have
$$\sigma(c) = \lim\limits_{n\to+\infty} I(\bar{u}_n)\geq I(\bar{u}) \geq\sigma(c),$$
which leads to $\sigma(c) = I(\bar{u})$. Hence, $\bar{u}$ is a minimizer of $\sigma(c)$ for any $c > 0$.
\end{proof}

{\it Proof of Theorem \ref{th1.1}.} For any $c > 0$, from Lemma \ref{lm3.5}, there exists $\bar{u}_c \in \mathcal{S}(c)$ such
that $I(\bar{u}_c) = \sigma(c)$. In view of the Lagrange multiplier theorem, there exists $\lambda_c \in\R$
such that
$$I'(\bar{u}_c) = \lambda_c\bar{u}_c.$$
Therefore, $(\bar{u}_{c},\lambda_c)$ is a solution of (\ref{1.1.0}).
\qed

\vskip4mm
{\subsection{ The proof of Theorem \ref{th1.1.5} }}
In this section,  under condition $(A'_1)$ and $p\in(2, \bar{p})$,  we prove Theorem \ref{th1.1.5} by using the following abstract variational principle \cite[Proposition 1.2]{bellazzini}.
\begin{proposition}\label{pr1.2}(\cite[Proposition 1.2]{bellazzini})
Let $\mathcal{H}$, $\mathcal{H}_1$ and $\mathcal{H}_2$ be three Hilbert spaces such that
$$\mathcal{H}\subset\mathcal{H}_1,\quad \mathcal{H}\subset\mathcal{H}_2$$
and
$$C_1\left(\|u\|^2_{\mathcal{H}_1}+\|u\|^2_{\mathcal{H}_2}\right)\leq\|u\|^2_{\mathcal{H}}\leq C_2\left(\|u\|^2_{\mathcal{H}_1}+\|u\|^2_{\mathcal{H}_2}\right)~~\forall u\in\mathcal{H}.$$
For given $c>0$, let $W, T: \mathcal{H}\mapsto \R$ such that:\\
(1) $T(0)=0$;\\
(2) $T$ is weakly continuous;\\
(3) $T(\nu u)\leq\nu^2 T(u)$ and $W(\nu u)\leq \nu^2 W(u)$, $\forall\nu\geq1,~u\in \mathcal{H}$;\\
(4) If $u_n\rightharpoonup u$ in $\mathcal{H}$ and $u_n\to u$ in $\mathcal{H}_2$, then $W(u_n)\to W(u)$;\\
(5) If $u_n\rightharpoonup u$ in $\mathcal{H}$, then $W(u_n-u)+W(u)=W(u_n)+o(1)$;\\
(6) $-\infty<\varsigma^{W+T}(c)<\varsigma^{W}(c)$, where
\begin{equation}\label{J1.8}
\aligned
&\varsigma^{W+T}(c):=\inf\limits_{u\in B_{\mathcal{H}_2}(c)\cap\mathcal{H}}\left(\frac{1}{2}\| u\|^2_{\mathcal{H}_1}+W(u)+T(u)\right),
\endaligned
\end{equation}
$$\varsigma^{W}(c):=\inf\limits_{u\in B_{\mathcal{H}_2}(c)\cap\mathcal{H}}\left(\frac{1}{2}\| u\|^2_{\mathcal{H}_1}+W(u)\right),$$
and
$$B_{\mathcal{H}_2}(c):=\{u\in\mathcal{H}_2: \|u\|_{\mathcal{H}_2}^2=c\};$$
(7) For every sequence $\{u_n\}\subset B_{\mathcal{H}_2}(c)\cap\mathcal{H}$ such that $\|u_n\|_{\mathcal{H}}\to\infty$, we have
$$\frac{1}{2}\|u_n\|_{\mathcal{H}_1}^2+W(u_n)+T(u_n)\to\infty~\text{as}~n\to\infty.$$
Then every minimizing sequence for (\ref{J1.8}), i.e.,
$$u_n\in B_{\mathcal{H}_2}(c)\cap\mathcal{H}~~\text{and}~~\frac{1}{2}\|u_n\|^2_{\mathcal{H}_1}+W(u_n)+T(u_n)\to \varsigma^{W+T}(c),$$
is compact in $\mathcal{H}$.
\end{proposition}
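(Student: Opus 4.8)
The plan is to run an abstract concentration--compactness argument for a minimizing sequence of $\varsigma^{W+T}(c)$, with hypotheses (1)--(7) playing the role that translation invariance and strict subadditivity play in the classical setting. Write $G(v):=\tfrac12\|v\|_{\mathcal H_1}^2+W(v)$ and $E(v):=G(v)+T(v)$, so that $\varsigma^{W+T}(c)=\inf_{B_{\mathcal H_2}(c)\cap\mathcal H}E$ and $\varsigma^{W}(c)=\inf_{B_{\mathcal H_2}(c)\cap\mathcal H}G$. The first thing I would record are two scaling inequalities coming from (3): for $\theta\ge1$, $s>0$,
\[\varsigma^{W+T}(\theta s)\le\theta\,\varsigma^{W+T}(s),\qquad \varsigma^{W}(\theta s)\le\theta\,\varsigma^{W}(s).\]
Indeed, testing a candidate $v$ with $\|v\|_{\mathcal H_2}^2=s$ against $\nu v$, $\nu=\sqrt{\theta}\ge1$, the exact norm scaling $\|\nu v\|_{\mathcal H_1}^2=\nu^2\|v\|_{\mathcal H_1}^2$ together with $W(\nu v)\le\nu^2W(v)$ and $T(\nu v)\le\nu^2T(v)$ gives $E(\nu v)\le\theta E(v)$ (and likewise for $G$); taking infima yields the claim. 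Reread in the form $\varsigma^{W+T}(s)\ge\tfrac{s}{c}\varsigma^{W+T}(c)$ and $\varsigma^{W}(s)\ge\tfrac{s}{c}\varsigma^{W}(c)$ for $0<s\le c$, these are the only consequences of (3) I will need.

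Next, let $\{u_n\}\subset B_{\mathcal H_2}(c)\cap\mathcal H$ be a minimizing sequence. By (7) it is bounded in $\mathcal H$ (an unbounded subsequence would force $E(u_n)\to\infty$, contradicting $E(u_n)\to\varsigma^{W+T}(c)$, which is finite by (6)), so up to a subsequence $u_n\rightharpoonup u$ in $\mathcal H$; since the inclusions $\mathcal H\subset\mathcal H_i$ are continuous, also $u_n\rightharpoonup u$ in $\mathcal H_1$ and $\mathcal H_2$. The Hilbert-space expansions then give the splittings $\|u_n\|_{\mathcal H_i}^2=\|u_n-u\|_{\mathcal H_i}^2+\|u\|_{\mathcal H_i}^2+o(1)$ for $i=1,2$; meanwhile (5) gives $W(u_n)=W(u_n-u)+W(u)+o(1)$ and the weak continuity (2) gives $T(u_n)\to T(u)$. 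Writing $w_n:=u_n-u$, $m:=\|u\|_{\mathcal H_2}^2\in[0,c]$ and $\beta_n:=\|w_n\|_{\mathcal H_2}^2\to c-m$, I assemble these into
\[\varsigma^{W+T}(c)=\lim_{n\to\infty}G(w_n)+\big(G(u)+T(u)\big).\]

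The crux, and the step I expect to be the real obstacle, is to exclude $m<c$; this is exactly where the strict gap (6) and the subquadratic scaling (3) must be used together. If $m=0$ then $u=0$, so $T(u_n)\to T(0)=0$ by (1)--(2) while $G(u_n)\ge\varsigma^{W}(c)$, and passing to the limit gives $\varsigma^{W+T}(c)\ge\varsigma^{W}(c)$, contradicting (6). If $0<m<c$, then $u$ is admissible at mass $m$, so $G(u)+T(u)\ge\varsigma^{W+T}(m)$, while for $n$ large $G(w_n)\ge\varsigma^{W}(\beta_n)\ge\tfrac{\beta_n}{c}\varsigma^{W}(c)$ (using $\beta_n\le c$ and the scaling bound). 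Letting $n\to\infty$ and inserting the two reversed scaling inequalities yields
\[\varsigma^{W+T}(c)\ \ge\ \varsigma^{W+T}(m)+\frac{c-m}{c}\varsigma^{W}(c)\ \ge\ \frac{m}{c}\varsigma^{W+T}(c)+\frac{c-m}{c}\varsigma^{W}(c),\]
which rearranges (dividing by $\tfrac{c-m}{c}>0$) to $\varsigma^{W+T}(c)\ge\varsigma^{W}(c)$, again contradicting (6). The one design choice that matters here is to bound the escaping mass $G(w_n)$ \emph{directly} by $\tfrac{\beta_n}{c}\varsigma^{W}(c)$; this sidesteps renormalizing $w_n$ onto the fixed sphere $B_{\mathcal H_2}(c-m)$, which would otherwise demand a delicate continuity statement for $\varsigma^{W}$.

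Having ruled out $m<c$, I conclude $m=c$, hence $\|u_n-u\|_{\mathcal H_2}^2\to0$, i.e. $u_n\to u$ strongly in $\mathcal H_2$. Then (4) gives $W(u_n)\to W(u)$ and (2) gives $T(u_n)\to T(u)$, so from $E(u_n)\to\varsigma^{W+T}(c)$ I read off $\tfrac12\|u_n\|_{\mathcal H_1}^2\to\varsigma^{W+T}(c)-W(u)-T(u)$. Since $u\in B_{\mathcal H_2}(c)\cap\mathcal H$ is admissible, $E(u)\ge\varsigma^{W+T}(c)$ forces $\lim\|u_n\|_{\mathcal H_1}^2\le\|u\|_{\mathcal H_1}^2$, and with weak lower semicontinuity in $\mathcal H_1$ this gives $\|u_n\|_{\mathcal H_1}\to\|u\|_{\mathcal H_1}$; as $\mathcal H_1$ is Hilbert and $u_n\rightharpoonup u$, convergence of the norms upgrades to $u_n\to u$ in $\mathcal H_1$. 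Strong convergence in both $\mathcal H_1$ and $\mathcal H_2$, together with $\|\cdot\|_{\mathcal H}^2\le C_2(\|\cdot\|_{\mathcal H_1}^2+\|\cdot\|_{\mathcal H_2}^2)$, gives $u_n\to u$ in $\mathcal H$. Finally, since every subsequence of a minimizing sequence is again minimizing, the same argument applies to it, so $\{u_n\}$ is relatively compact in $\mathcal H$, which is the assertion.
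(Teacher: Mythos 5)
You should first be aware that the paper offers no proof of Proposition \ref{pr1.2} to compare against: the result is quoted verbatim from \cite[Proposition 1.2]{bellazzini} and used as a black box in the proof of Theorem \ref{th1.1.5}, so your proposal supplies an argument the paper omits. Judged on its own merits, your proof is correct and complete. The preliminary scaling bounds $\varsigma^{W+T}(s)\ge\frac{s}{c}\,\varsigma^{W+T}(c)$ and $\varsigma^{W}(s)\ge\frac{s}{c}\,\varsigma^{W}(c)$ for $0<s\le c$ do follow from hypothesis (3) together with the exact quadratic scaling of Hilbert norms; (7) and the finiteness in (6) give boundedness in $\mathcal{H}$; weak convergence passes to $\mathcal{H}_1$ and $\mathcal{H}_2$ through the continuous inclusions, so the Hilbert expansion of the norms, the splitting (5) for $W$, and the weak continuity (2) of $T$ legitimately assemble into $\varsigma^{W+T}(c)=\lim_n G(w_n)+G(u)+T(u)$. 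The two exclusion steps are where the hypotheses genuinely interact, and both are right: if $u=0$, weak continuity of $T$ with $T(0)=0$ erases the only difference between the two variational problems and forces $\varsigma^{W+T}(c)\ge\varsigma^{W}(c)$, contradicting (6); if $0<m<c$, your direct bounds $G(w_n)\ge\varsigma^{W}(\beta_n)\ge\frac{\beta_n}{c}\varsigma^{W}(c)$ (valid for large $n$ since $\beta_n\to c-m\in(0,c)$) and $G(u)+T(u)\ge\varsigma^{W+T}(m)\ge\frac{m}{c}\varsigma^{W+T}(c)$ combine and, after dividing by $\frac{c-m}{c}>0$, give the same contradiction. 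The endgame is also correctly executed: $m=c$ gives strong $\mathcal{H}_2$ convergence, then (4) and (2) plus minimality pin down $\lim\|u_n\|_{\mathcal{H}_1}$, weak lower semicontinuity and the Radon--Riesz property upgrade this to strong $\mathcal{H}_1$ convergence, the right-hand norm equivalence yields strong $\mathcal{H}$ convergence, and the subsequence-of-subsequences remark correctly extends the conclusion to the full sequence. Your route --- in particular bounding the escaping mass directly by $\frac{\beta_n}{c}\varsigma^{W}(c)$ rather than renormalizing $w_n$ onto a fixed sphere --- is in substance the same scaling-based replacement for strict subadditivity that the cited source employs, and it has the advantage you point out of requiring no continuity statement for $s\mapsto\varsigma^{W}(s)$.
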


\begin{lemma}\label{J3.1} Assume that $A(x)$, $A_0$ and $p$ are as in Theorem \ref{th1.1.5}. Then there exists $c_0>0$ such that:
$$\sigma^{A(x),A(y)}(c)<\sigma^{\min\{A(x),A_0\},A(y)}(c)~~\forall~c>c_0,$$
where
$$\sigma^{A(x),A(y)}(c):=\inf\limits_{u\in \mathcal{S}(c)}\left[\frac{1}{2} \int|\nabla u|^{2} \mathrm{d} x-\frac{1}{2p}\int\int\frac{ A(x) |u(x)|^pA(y) |u(y)|^p}{|x-y|^\mu} \mathrm{d} x\mathrm{d} y\right]=\sigma(c)$$
and
$$\sigma^{\min\{A(x),A_0\},A(y)}(c):=\inf\limits_{u\in \mathcal{S}(c)}\left[\frac{1}{2} \int|\nabla u|^{2} \mathrm{d} x-\frac{1}{2p}\int\int\frac{ \min\{A(x),A_0\}|u(x)|^pA_0|u(y)|^p}{|x-y|^\mu} \mathrm{d} x\mathrm{d} y\right].$$
\end{lemma}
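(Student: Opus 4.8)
The plan is to compare both minimization levels with those of the \emph{autonomous} Choquard functional and to exploit the asymmetry between the two weights: the truncated weight satisfies $\min\{A(x),A_0\}A_0\le A_0^2$ \emph{everywhere}, whereas the genuine weight $A(x)A(y)$ exceeds $A_1^2>A_0^2$ on a region of positive measure on which a highly concentrated test function can be placed. Throughout write $\gamma:=Np-2N+\mu$, so that $0<\gamma<2$ because $2_{*,\mu}<p<\bar p$, set $\theta:=\frac{2p-\gamma}{2-\gamma}$, and note $\theta-1=\frac{2(p-1)}{2-\gamma}>0$, i.e.\ $\theta>1$. For a constant weight $a>0$ put $m_a(c):=\inf_{u\in\mathcal S(c)}\big[\frac12\|\nabla u\|_2^2-\frac{a^2}{2p}D(u)\big]$, with $D(u):=\int\int\frac{|u(x)|^p|u(y)|^p}{|x-y|^\mu}\,dx\,dy$. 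Using the mass-preserving dilation $u\mapsto u^s$ (which multiplies $\|\nabla u\|_2^2$ by $s^2$ and $D(u)$ by $s^\gamma$) together with the amplitude scaling, I would first record the two exact homogeneities $m_a(c)=a^{4/(2-\gamma)}m_1(c)$ and $m_1(c)=c^\theta m_1(1)$, where $m_1(1)\in(-\infty,0)$ (negativity as in Lemma \ref{lm3.1}, finiteness from (\ref{c3.1})). Writing $q:=\frac{4}{2-\gamma}$ and $\kappa^*:=-m_1(1)>0$, this reads $m_a(c)=-\kappa^*a^{q}c^{\theta}$.

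For the lower bound, since $0\le\min\{A(x),A_0\}\le A_0$ gives $\min\{A(x),A_0\}A_0\le A_0^2$ pointwise, one has $\sigma^{\min\{A(x),A_0\},A(y)}(c)\ge m_{A_0}(c)=-\kappa^*A_0^{q}c^{\theta}$. For the upper bound I would first fix, using $(A'_1)$, a number $A_1>A_0$ with $\mathrm{meas}\{A\ge A_1\}\in(0,\infty)$ (such $A_1$ exists since $\{A>A_0\}=\bigcup_n\{A\ge A_0+\tfrac1n\}$ has positive measure), and a Lebesgue density point $z_0$ of $\Omega_1:=\{A\ge A_1\}$. For a compactly supported, $L^2$-normalized profile $\phi$, set $u_{c,s}(x):=\sqrt c\,s^{N/2}\phi\big(s(x-z_0)\big)\in\mathcal S(c)$. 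After the change of variables $\xi=s(x-z_0)$, the nonlocal term of $I$ equals $s^{\gamma}c^{p}\int\int\frac{A(z_0+\xi/s)A(z_0+\eta/s)|\phi(\xi)|^p|\phi(\eta)|^p}{|\xi-\eta|^\mu}\,d\xi\,d\eta$, and since $A\ge0$ it is bounded below by $A_1^2 s^\gamma c^p$ times the same integral restricted to $E_s:=s(\Omega_1-z_0)$. The density property of $z_0$ forces $\mathrm{meas}\big(B_R\setminus E_s\big)\to0$ as $s\to\infty$ for each fixed $R$, hence $|\phi|^p\mathbf 1_{E_s}\to|\phi|^p$ in $L^{2N/(2N-\mu)}$ and, by Hardy--Littlewood--Sobolev, the restricted integral converges to $D(\phi)$. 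Optimizing the resulting estimate $I(u_{c,s})\le\frac{cs^2}2\|\nabla\phi\|_2^2-\frac{A_1^2c^ps^\gamma}{2p}D(\phi)\big(1-o_s(1)\big)$ over $s$ at its minimizer $\bar s(c)$ (which satisfies $\bar s(c)\to\infty$ as $c\to\infty$) then yields $\sigma^{A(x),A(y)}(c)\le-\kappa(\phi)A_1^{q}c^{\theta}\big(1-o_c(1)\big)$, where $\kappa(\phi)>0$ is the profile constant and $o_c(1)\to0$ as $c\to\infty$.

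To conclude I would choose the profile so that the genuine weight wins. Since $\sup_{\phi}\kappa(\phi)=\kappa^*$ over $L^2$-normalized $\phi$ and compactly supported profiles are dense, I can fix a compactly supported $\phi$ with $\kappa(\phi)A_1^{q}>\kappa^*A_0^{q}$ (possible because $\kappa^*A_1^q>\kappa^*A_0^q$). Setting $\eta_0:=\kappa(\phi)A_1^q-\kappa^*A_0^q>0$ and comparing the two bounds, the desired strict inequality $\sigma^{A(x),A(y)}(c)<\sigma^{\min\{A(x),A_0\},A(y)}(c)$ holds as soon as $\kappa(\phi)A_1^q\,o_c(1)<\eta_0$, i.e.\ for all $c$ larger than some $c_0>0$. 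The delicate point, which I would write out most carefully, is the concentration estimate of the second paragraph: quantifying via the Lebesgue density theorem that the rescaled good set $E_s$ exhausts the fixed compact support of $\phi$ in measure, transferring this to convergence of the \emph{singular} double integral through Hardy--Littlewood--Sobolev, and keeping the error $o_c(1)$ controlled uniformly enough to be beaten by the fixed gap $\eta_0$. The reason a threshold $c_0$ is unavoidable is exactly that only for large $c$ is the energy-minimizing scale $\bar s(c)$ large enough to squeeze the test function inside $\Omega_1$.
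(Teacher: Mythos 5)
Your proof is correct, but it takes a genuinely different route from the paper's. The paper's proof first invokes Lemma \ref{lm3.5} with the constant coefficient $A_0$ to obtain an actual minimizer $u_0$ of the autonomous level $\sigma^{A_0}(1)$, rescales it to the explicit family $u_0^c=c^{-b}u_0(x/c^{a})$ of minimizers of $\sigma^{A_0}(c)$ as in (\ref{J3.3}), which automatically concentrates as $c\to\infty$ because $a,b<0$; it then centers the analysis at a Lebesgue point of $A$ with $A(0)>A_0$ (Lebesgue differentiation applied to $|A-A(0)|^{\frac{2N}{2N-\mu}}$, see (\ref{J3.2})) and gets strictness by comparing the two energies at the same function $u_0^c$, splitting the difference into $I(c)+II(c)$ in (\ref{J3.7}) and controlling $II(c)$ by Hardy--Littlewood--Sobolev. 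You never use attainment of any level: you substitute the exact power law $m_a(c)=-\kappa^*a^{4/(2-\gamma)}c^{\theta}$ for the autonomous problems, concentrate a near-optimal \emph{compactly supported} profile at a density point of a superlevel set $\{A\ge A_1\}$ with $A_1>A_0$, and win through a gap between the constants of two power laws with the same exponent $\theta$. What each approach buys: the paper's argument leverages machinery it has already built (Lemma \ref{lm3.5}, whose proof carries the whole subadditivity/compactness apparatus), and testing with an exact minimizer of an autonomous problem keeps the computation concrete; your argument is more self-contained and robust, since it needs only the value and scaling of the autonomous level rather than its attainment, and it makes transparent why a threshold $c_0$ is unavoidable (the optimal dilation $\bar s(c)$ is large only for large $c$). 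Your two-sided bound also sidesteps a delicate point in the paper's reduction: the step ``it is sufficient to prove (\ref{J3.7})'' treats $u_0^c$ as if it minimized the intermediate functional with weight $A_0A(y)$, whereas it is only known to minimize the functional with weight $A_0^2$; your lower bound $\sigma^{\min\{A(x),A_0\},A(y)}(c)\ge m_{A_0}(c)$ instead uses only the pointwise inequality $\min\{A(x),A_0\}A_0\le A_0^2$, exactly as the statement is normalized. (One caveat: that normalization matters. If the second weight in the truncated functional were $A(y)$ rather than $A_0$, as in the functional $W$ used later in the proof of Theorem \ref{th1.1.5}, your lower bound would degrade to $-\kappa^*(A_0\|A\|_\infty)^{2/(2-\gamma)}c^{\theta}$, and you would need to pick $A_1$ close to the essential supremum of $A$ for the comparison of constants to survive.)
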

\begin{proof}
Since $A\in L^\infty(\R^N,\R)$, then by Lebesgue derivation Theorem that
$$\lim\limits_{\delta\to0}\delta^{-N}\int_{B_\delta(x_0)}|A(x)-A(x_0)|^{\frac{2N}{2N-\mu}}dx=0~~\text{for~almost~each~}x_0\in\R^N.$$
By condition $(A'_1)$ we have $meas\{x\in\R^N: A(x)>A_0\}>0$, then we deduce that there exists $\tilde{x}\in\{x\in\R^N: A(x)>A_0\}$ such that
$$\lim\limits_{\delta\to0}\delta^{-N}\int_{B_\delta(\tilde{x})}|A(x)-A(\tilde{x})|^{\frac{2N}{2N-\mu}}dx=0.$$
For simplicity we can assume that $\tilde{x}\equiv0$, hence we have
\begin{equation}\label{J3.2}
\lim\limits_{\delta\to0}\delta^{-N}\int_{B_\delta(0)}|A(x)-A(0)|^{\frac{2N}{2N-\mu}}dx=0~\text{with}~A(0)>A_0.
\end{equation}
By Lemma \ref{lm3.5}, there exists a minimizer $u_0\in H^1(\R^N,\R)$ for $\sigma^{A_0}(1)$. It is easy to check that
\begin{equation}\label{J3.3}
u_0^c:=u_0(x/c^{a})c^{-b}~\text{is~a~minimizer~for~}\sigma^{A_0}(c),
\end{equation}
where
$$a:=\frac{p-1}{N(p-2)+\mu-2},\quad b:=\frac{N+2-\mu}{2N(p-2)+2\mu-4}.$$
Notice that by $p<\bar{p}=2+\frac{2-\mu}{N}$, we have
$$a<0~~\text{and}~~b<0.$$
We claim that there is $c_0>0$ such that
\begin{equation}\label{J3.5}
\sigma^{A(x),A(y)}(c)<\sigma^{A_0, A(y)}(c)~\forall~c>c_0.
\end{equation}
On the other hand $0\leq\min\{A(x),A_0\}\leq A_0$ implies that
\begin{equation}\label{J3.6}
\sigma^{A_0, A(y)}(c)\leq\sigma^{A(x), A(y)}(c).
\end{equation}
By combining (\ref{J3.5}) and (\ref{J3.6}) we get the desired result.

Next we prove (\ref{J3.5}). Due to (\ref{J3.3}) it is sufficient to prove the following inequality:
\begin{equation*}
\aligned
&\frac{1}{2}\|\nabla u_0^c\|_2^2-\frac{1}{2p}\int\int\frac{A(x)|u_0^c(x)|^pA(y)|u_0^c(y)|^p}{|x-y|^{\mu}}dxdy\\
<&\frac{1}{2}\|\nabla u_0^c\|_2^2-\frac{A_0}{2p}\int\int\frac{|u_0^c(x)|^pA(y)|u_0^c(y)|^p}{|x-y|^{\mu}}dxdy
\endaligned
\end{equation*}
or equivalently
\begin{equation}\label{J3.7}
\aligned
&I(c)+II(c):= \\
&\frac{A_0-A(0)}{2p}\int\int\frac{|u_0^c(x)|^p|u_0^c(y)|^p}{|x-y|^{\mu}}dxdy
+\frac{1}{2p}\int\int\frac{(A(0)-A(x))|u_0^c(x)|^pA(y)|u_0^c(y)|^p}{|x-y|^{\mu}}dxdy\\
<&0.
\endaligned
\end{equation}
By $A_0<A(0)$ we can fix $R_0>0$ such that
\begin{equation}\label{J3.8}
\aligned
&\frac{A_0-A(0)}{2p}\int\int\frac{|u_0(x)|^p|u_0(y)|^p}{|x-y|^{\mu}}dxdy \\
&+\frac{1}{2^{\frac{\mu}{2N}}p}\|A\|_\infty^{2-\frac{\mu}{2N}}\|u_0\|_{\frac{2Np}{2N-\mu}}^p\left( \int_{|x|\geq R_0 }
|u_0(x)|^{\frac{2Np}{2N-\mu}}dx \right)^{\frac{2N-\mu}{2N}}\\
=&-\varepsilon_0<0.
\endaligned
\end{equation}
By calculation, we get
$$I(c)=\frac{A_0-A(0)}{2p}c^{(2N-\mu)a-2pb}\int\int\frac{|u_0(x)|^p|u_0(y)|^p}{|x-y|^{\mu}}dxdy.$$
And by (\ref{J3.2}) and Hardy-Littlewood-Sobolev inequality we have
\begin{equation*}
\aligned
II(c)=&\frac{1}{2p}\int\int\frac{(A(0)-A(x))|u_0^c(x)|^pA(y)|u_0^c(y)|^p}{|x-y|^{\mu}}dxdy\\
\leq& \frac{\|A\|_\infty}{2p}\|u^c_0\|_{\frac{2Np}{2N-\mu}}^p \left(\int|A(0)-A(x)|^{\frac{2N}{2N-\mu}}|u_0^c(x)|^{\frac{2Np}{2N-\mu}}dx\right)^{\frac{2N-\mu}{2N}}
\\
\leq& \frac{\|A\|_\infty}{2p}c^{\frac{(2N-\mu)a-2pb}{2}}\|u_0\|_{\frac{2Np}{2N-\mu}}^p \\
&\cdot\left(2\|A\|_\infty c^{Na-\frac{2Np}{2N-\mu}b}\int_{|x|\geq R_0 }
|u_0(x)|^{\frac{2Np}{2N-\mu}}dx \right.\\
&\left.+c^{Na-pb}\|u_0\|_\infty c^{-Na}\int_{|x|\leq R_0 c^a}|A(0)-A(x)|^{\frac{2N}{2N-\mu}}dx\right)^{\frac{2N-\mu}{2N}}\\
=&\frac{\|A\|_\infty}{2p}\|u_0\|_{\frac{2Np}{2N-\mu}}^pc^{(2N-\mu)a-2pb}\left(2\|A\|_\infty \int_{|x|\geq R_0 }
|u_0(x)|^{\frac{2Np}{2N-\mu}}dx \right)^{\frac{2N-\mu}{2N}}\\
&+c^{(2N-\mu)a-2pb+\frac{\mu}{2N}pb}o(1),
\endaligned
\end{equation*}
where $\lim\limits_{c\to\infty}o(1)=0$.
Therefore, by (\ref{J3.8}) we have
\begin{equation*}
\aligned
I(c)+II(c)\leq&c^{(2N-\mu)a-2pb}\left[\frac{A_0-A(0)}{2p}\int\int\frac{|u_0(x)|^p|u_0(y)|^p}{|x-y|^{\mu}}dxdy \right.\\
&\left.+\frac{1}{2^{\frac{\mu}{2N}}p}\|A\|_\infty^{2-\frac{\mu}{2N}}\|u_0\|_{\frac{2Np}{2N-\mu}}^p\left( \int_{|x|\geq R_0 }
|u_0(x)|^{\frac{2Np}{2N-\mu}}dx \right)^{\frac{2N-\mu}{2N}}+c^{\frac{\mu}{2N}pb}o(1)\right]\\
&=c^{(2N-\mu)a-2pb}(-\varepsilon_0+o(1)),
\endaligned
\end{equation*}
which implies (\ref{J3.7}) for $c$ large enough, and in turn it is equivalent (\ref{J3.5}).
\end{proof}

 {\it Proof of Theorem \ref{th1.1.5}.} It is sufficient to show that any minimizing sequence for $\sigma(c)=\sigma^{A(x),A(y)}(c)$ is compact in $H^1(\R^N,\R)$. We use Proposition \ref{pr1.2} to prove this by choosing
 \begin{equation*}
\aligned
&\mathcal{H}=H^1(\R^N,\R),\quad \mathcal{H}_1=\mathcal{D}^{1,2}(\R^N,\R),\quad \mathcal{H}_2=L^2(\R^N,\R),\\
&W(u)=-\frac{1}{2p}\int_{\R^N}\int_{\R^N}\frac{\min\{A(x),A_0\}|u(x)|^pA(y)|u(y)|^p}{|x-y|^\mu}dxdy,\\
&T(u)=-\frac{1}{2p}\int_{A(x)\geq A_0}\int_{y\in\R^N}\frac{(A(x)-A_0)|u(x)|^pA(y)|u(y)|^p}{|x-y|^\mu}dxdy.
 \endaligned
\end{equation*}
Then
 \begin{equation*}
\aligned
W(u)+T(u)=-\frac{1}{2p}\int_{\R^N}\int_{\R^N}\frac{A(x)|u(x)|^pA(y)|u(y)|^p}{|x-y|^{\mu}}dxdy
 \endaligned
\end{equation*}
and $\varsigma^{W+T}(c)=\sigma(c)$.

It is easy to verify that the conditions (1), (3) in Proposition \ref{pr1.2} hold. The left hand side inequality in (6) follows from Lemma \ref{lm3.1}; The right hand side inequality in (6) follows from Lemma \ref{J3.1} provided that $c>c_0$,
where $c_0$ comes from (\ref{J3.5}). Since
$$p<\bar{p}\Rightarrow 2>Np-2N+\mu,$$
then (\ref{c3.1}) implies (7). (5) follows from Lemma \ref{lm2.11}.

Next, we prove (2). Let $u_n\rightharpoonup u$ in $H^1(\R^N,\R)$, then $\{u_n-u\}$ is bounded in $H^1(\R^N,\R)$.
By $(A'_1)$, $\{x\in\R^N: A(x)\geq A_0\}$ is bounded in $\R^N$, and thus by Rellich Compactness Theorem
$$\int_{A(x)\geq A_0}|u_n(x)-u(x)|^{\frac{2Np}{2N-\mu}}dx\to0.$$
Then, by Lemma \ref{lm2.11} (Brezis-Lieb lemma of nonlocal version) and Hardy-Littlewood-Sobolev inequality, we have
 \begin{equation*}
\aligned
|T(u_n)-T(u)|=&\frac{1}{2p}\left|\int_{A(x)\geq A_0}\int_{y\in\R^N}\frac{(A(x)-A_0)A(y)(|u_n(x)|^p|u_n(y)|^p-|u(x)|^p|u(y)|^p)}{|x-y|^\mu}dxdy\right|\\
\leq&\frac{\|A\|_\infty^2}{p}\left|\int_{A(x)\geq A_0}\int_{y\in\R^N}\frac{(|u_n(x)|^p|u_n(y)|^p-|u(x)|^p|u(y)|^p)}{|x-y|^\mu}dxdy\right|\\
=&\frac{\|A\|_\infty^2}{p}\left|\int_{A(x)\geq A_0}\int_{y\in\R^N}\frac{|u_n(x)-u(x)|^p|u_n(y)-u(y)|^p}{|x-y|^\mu}dxdy+o(1)\right|\\
\leq&C\left|\int_{A(x)\geq A_0}|u_n(x)-u(x)|^{\frac{2Np}{2N-\mu}}dx\right|^{\frac{2N-\mu}{2N}}\|u_n-u\|^p_{\frac{2Np}{2N-\mu}}+o(1)\\
&\to 0.
 \endaligned
\end{equation*}
Thus $T$ is weakly continuous, i.e., (2) holds.

Finally we shall verify (4). Let $u_n\rightharpoonup u$ in $H^1(\R^N,\R)$ and $u_n\to u$ in $L^2(\R^N,\R)$. Then we have
$$u_n\to u~~\text{in}~L^q(\R^N,\R)~~\forall 2\leq q<2^*.$$
Then, Lemma \ref{lm2.11} and Hardy-Littlewood-Sobolev inequality imply
 \begin{equation*}
\aligned
|W(u_n)-W(u)|
\leq&\frac{\|A\|_\infty^2}{2p}\left|\int_{\R^N}\int_{\R^N}\frac{(|u_n(x)|^p|u_n(y)|^p-|u(x)|^p|u(y)|^p)}{|x-y|^\mu}dxdy\right|\\
\leq&C\|u_n-u\|^{2p}_{\frac{2Np}{2N-\mu}}+o(1)\to 0.
 \endaligned
\end{equation*}
Thus (4) holds.
 \qed

\vskip4mm
{\section{ $L^2-$supercritical case }}
 \setcounter{equation}{0}
 \vskip4mm
We prove Theorem \ref{th1.2} in this section. Our method is derived from \cite{MR4081327}. $(A_1)$, $(A_3)$ and $(A_4)$ hold with $\bar{p}<p<2^*_\mu$.
Since $A(x) \equiv A_\infty$ satisfies $(A_1)$, $(A_3)$ and $(A_4)$, all the following conclusions on $I$ are also true for $I_\infty$.
\begin{lemma}\label{lm2.1} We have
\begin{equation}\label{c2.3}
\begin{array}{ll}
\psi(t,x):=&-2 t^{-\frac{Np-2N+\mu}{2}}[A(x)-A(t x)]\\
&+\frac{4(t^{-\frac{Np-2N+\mu}{2}}-1)}{Np-2N+\mu} \nabla A(x) \cdot x\geq 0,\quad~ \forall  t>0, x\in \mathbb{R}^{N};
\end{array}
\end{equation}
\begin{equation}\label{c2.4}
t\mapsto A(t x)~\text{is~nonincreasing~on}~(0,\infty), \quad \forall x \in \mathbb{R}^{N};
 \end{equation}
 \begin{equation}\label{c2.5}
-\nabla A(x) \cdot x \geq 0,\quad\forall x \in \mathbb{R}^{N},  \text { and } -\nabla A(x) \cdot x \rightarrow 0, \quad \text { as }|x| \rightarrow \infty.
\end{equation}
\end{lemma}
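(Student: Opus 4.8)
The plan is to reduce all three assertions to a one-variable analysis along rays. Fix $x\in\R^N$ (the case $x=0$ is trivial), put $g(t):=A(tx)$, which is $C^1$ in $t>0$ with $g'(t)=\nabla A(tx)\cdot x$ and $tg'(t)=\nabla A(tx)\cdot(tx)$, and abbreviate $\alpha:=\tfrac{Np-2N+\mu}{2}>0$ and $w(t):=\nabla A(tx)\cdot(tx)=tg'(t)$. With this notation $(A_3)$ says precisely that $H(t):=2\alpha g(t)-2w(t)$ is nonincreasing on $(0,\infty)$. For \x{c2.3} I would rewrite $\psi(t,x)=-2t^{-\alpha}\big(g(1)-g(t)\big)+\tfrac{2}{\alpha}\big(t^{-\alpha}-1\big)g'(1)$, note $\psi(1,x)=0$, and differentiate to reach the identity $\partial_t\psi(t,x)=t^{-\alpha-1}\big(H(1)-H(t)\big)$. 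Since $H$ is nonincreasing, the right-hand side is $\le0$ for $t<1$ and $\ge0$ for $t>1$, so $t=1$ is the global minimum of $t\mapsto\psi(t,x)$ and $\psi(t,x)\ge\psi(1,x)=0$. This uses only $(A_3)$ and the continuity of $H$.

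Next I would establish the decay in \x{c2.5} by first proving $w(t)\to0$ as $t\to\infty$ along each fixed ray. As a monotone function $H$ has a limit $\ell$, and $(A_1)$ gives $g(t)\to A_\infty$; the relation $w=\alpha g-\tfrac12 H$ then shows $w(t)$ has a limit $L\in(-\infty,+\infty]$. Integrating $g'=w/s$ over $[T,2T]$ yields $g(2T)-g(T)=\int_T^{2T}w(s)\,s^{-1}\,ds$, whose left side tends to $0$ while its right side tends to $L\ln2$ (or $+\infty$) if $L\ne0$; hence $L=0$, so $w(t)\to0$ and $\ell=2\alpha A_\infty$.

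The sign statements \x{c2.4} and the first part of \x{c2.5} come from the auxiliary function $q(t):=t^{-\alpha}w(t)$. A direct computation gives $d\big(t^{-\alpha}w\big)=-\tfrac12 t^{-\alpha}\,dH$, which is nonnegative because $H$ is nonincreasing, so $q$ is nondecreasing; since $t^{-\alpha}\to0$ and $w\to0$ we have $q(t)\to0$, and a nondecreasing function tending to $0$ at $+\infty$ must satisfy $q\le0$. Therefore $w(t)=\nabla A(tx)\cdot(tx)\le0$ for all $t>0$, which (writing $y=tx$) is exactly $-\nabla A(y)\cdot y\ge0$, and is equivalent to $\tfrac{d}{dt}A(tx)=g'(t)\le0$, i.e. \x{c2.4}. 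To upgrade $w\to0$ to the uniform limit $-\nabla A(x)\cdot x\to0$ as $|x|\to\infty$, I would use $q$ nondecreasing once more: for $s\in[t/2,t]$, $q(s)\le q(t)$ gives $w(s)\le(s/t)^\alpha w(t)$, and integrating against $ds/s$ yields, for $x=t\omega$ with $|\omega|=1$, the lower bound $w(t)\ge\tfrac{\alpha}{1-2^{-\alpha}}\big(A(t\omega)-A(\tfrac t2\omega)\big)$. Because $A(x)\to A_\infty$ uniformly as $|x|\to\infty$, the right side tends to $0$ uniformly in $\omega$, and together with $w\le0$ this forces $\nabla A(x)\cdot x\to0$.

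The step I expect to be the crux is the sign in \x{c2.4}/\x{c2.5}: it cannot be read off pointwise from the hypotheses, since $(A_4)$ only bounds $w$ from below and the monotonicity of $H$ only gives the nonnegative upper bound $w\le\alpha(g-A_\infty)$. The decisive input is instead the global comparison ``$q$ nondecreasing with limit $0$'', which marries $(A_3)$ to the behaviour at infinity supplied by $(A_1)$; note in particular that $(A_4)$ is not needed here. A secondary difficulty is that \x{c2.5} is a genuine limit as $|x|\to\infty$ and hence requires control uniform in the direction $\omega$, which is why I pass through the explicit two-sided estimate above rather than arguing ray by ray; and since $A$ is only $C^1$, the monotonicity of $q$ is best justified through the Stieltjes identity $d(t^{-\alpha}w)=-\tfrac12 t^{-\alpha}dH$ rather than by differentiating $H$.
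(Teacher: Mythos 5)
Your argument is correct, and for \x{c2.3} it coincides with the paper's proof: both differentiate $\psi$ to obtain $\partial_t\psi(t,x)=t^{-1-\alpha}\left(H(1)-H(t)\right)$, where $\alpha=\frac{Np-2N+\mu}{2}$ and $H(t)=(Np-2N+\mu)A(tx)-2\nabla A(tx)\cdot(tx)$, so that $(A_3)$ places the global minimum of $t\mapsto\psi(t,x)$ at $t=1$. Where you genuinely diverge is in \x{c2.4}--\x{c2.5}, and there the paper is far more economical: it extracts both statements directly from the inequality \x{c2.3} you have already proved. Letting $t\to+\infty$ in \x{c2.3}, the term $t^{-\alpha}[A(x)-A(tx)]$ vanishes because $(A_1)$ keeps $A(tx)$ bounded, so $\psi(t,x)\to-\frac{2}{\alpha}\nabla A(x)\cdot x\ge 0$; this is the first half of \x{c2.5}, and \x{c2.4} follows at once. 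Taking $t=1/2$ in \x{c2.3} gives $0\le-\nabla A(x)\cdot x\le\frac{2^{\alpha}\alpha}{2^{\alpha}-1}\,[A(x/2)-A(x)]\to 0$ as $|x|\to\infty$, again by $(A_1)$. Your alternative route --- proving $w(t)=\nabla A(tx)\cdot(tx)\to0$ along rays by integrating $g'=w/s$, then showing $q(t)=t^{-\alpha}w(t)$ is nondecreasing via the Stieltjes identity $d\big(t^{-\alpha}w\big)=-\tfrac12 t^{-\alpha}\,dH$ and concluding $w\le 0$ from ``nondecreasing with limit $0$'' --- is sound (in particular $H$ is continuous since $A\in C^1$, so the measure-theoretic product rule is legitimate), and your final uniform estimate $w(t)\ge\frac{\alpha}{1-2^{-\alpha}}\left[A(t\omega)-A(\tfrac t2\omega)\right]$ is literally the same bound the paper obtains from the substitution $t=1/2$. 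However, your assertion that the sign ``cannot be read off pointwise from the hypotheses'' is mistaken: it falls out of \x{c2.3} itself in the limit $t\to\infty$, with $(A_1)$ used only to keep the first term of $\psi$ bounded, and this collapses your second and third paragraphs to two lines. What your longer route buys is only the intermediate fact that $w\to0$ along each fixed ray, at the cost of auxiliary machinery the lemma does not require.
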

\begin{proof}
First, for any $x\in \R^N$, by $(A_3)$, we have
\begin{equation*}
\aligned
\frac{d\psi(t, x)}{dt}
=&t^{-1-\frac{Np-2N+\mu}{2}}\big\{[(Np-2N+\mu)A(x)-2\nabla A(x)\cdot x]\\
&-[(Np-2N+\mu)A(tx)-2\nabla A(tx)\cdot (tx)]\big\}
\\
&\quad\quad \left\{
\begin{array}{ll}
\aligned
&\geq0,~~~t\geq1,\\
&\leq0,~~~0<t<1,
\endaligned
\end{array}
\right.
\endaligned
\end{equation*}
which implies that $\psi(t, x) \geq \psi(1, x) = 0$ for all $t > 0$ and $x\in\R^N$ , i.e., $(\ref{c2.3})$ holds.

Next, let $t \to+\infty$ in $(\ref{c2.3})$, we have $-\nabla A(x)\cdot x \geq 0$ for all $x\in\R^N$, which leads to $(\ref{c2.4})$.
Last, let $t = 1/2$ in $(\ref{c2.3})$, then one has
$$0\leq-\nabla A(x)\cdot x\leq -\frac{2^{\frac{Np-2N+\mu}{2}}(Np-2N+\mu)[A(x)-A(x/2)] }{2(2^{\frac{Np-2N+\mu}{2}}-1)}\to0,\quad \text{as}~|x|\to+\infty.$$
This shows $(\ref{c2.5})$ holds.
\end{proof}

\begin{lemma}\label{cr2.7} For $u \in \mathcal{M}(c)$,
$I(u)>I\left( u^{t}\right)$ for all $t\in(0,1)\cup(1,+\infty)$, where $u^t(x) = t^{\frac{N}{2}} u(tx)$.
\end{lemma}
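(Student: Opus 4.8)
The plan is to show that for a fixed $u\in\mathcal{M}(c)$ the one–variable function $\gamma(t):=I(u^{t})$ has a strict global maximum at $t=1$; since $\gamma(1)=I(u)$ this is exactly the assertion. The first observation I would use is the composition identity $(u^{t})^{s}=u^{ts}$, which follows directly from $u^{t}(x)=t^{N/2}u(tx)$. Because $J(w)=\frac{d}{ds}I(w^{s})|_{s=1}$, applying this with $w=u^{t}$ gives $J(u^{t})=\frac{d}{ds}\gamma(ts)|_{s=1}=t\,\gamma'(t)$, i.e.
\[
\gamma'(t)=\frac{1}{t}\,J(u^{t}).
\]
Thus it suffices to determine the sign of $J(u^{t})$. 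Writing $m:=Np-2N+\mu$ and evaluating the two pieces of $J(u^{t})$ by the change of variables $\xi=tx,\ \eta=ty$ exactly as in the derivation of (\ref{I2})--(\ref{J}), I obtain
\[
J(u^{t})=t^{2}\|\nabla u\|_{2}^{2}-\frac{t^{m}}{2p}\,D(t),\qquad
D(t):=\int\int\frac{[mA(x/t)-2\nabla A(x/t)\cdot(x/t)]A(y/t)\,|u(x)|^{p}|u(y)|^{p}}{|x-y|^{\mu}}\,dx\,dy .
\]

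Next I would invoke the constraint $J(u)=0$. Since $D(1)$ is precisely the double integral appearing in (\ref{J}), this reads $\|\nabla u\|_{2}^{2}=\frac{1}{2p}D(1)$, and substituting yields the clean form
\[
J(u^{t})=\frac{t^{2}}{2p}\big[D(1)-t^{m-2}D(t)\big]=\frac{t^{2}}{2p}\big[G(1)-G(t)\big],\qquad G(t):=t^{m-2}D(t).
\]
The whole lemma now reduces to the single claim that $G$ is strictly increasing on $(0,\infty)$: granting this, $G(t)<G(1)$ on $(0,1)$ forces $\gamma'(t)=J(u^{t})/t>0$, while $G(t)>G(1)$ on $(1,\infty)$ forces $\gamma'(t)<0$, so $\gamma$ increases up to $t=1$ and decreases after, giving the strict maximum $I(u)=\gamma(1)>\gamma(t)=I(u^{t})$ for every $t\ne1$.

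To prove $G$ strictly increasing I would work pointwise inside the integral, factoring the integrand of $G(t)$ as
\[
\big[\,t^{m-2}A(y/t)\,\big]\cdot\big[\,mA(x/t)-2\nabla A(x/t)\cdot(x/t)\,\big].
\]
Here $t^{m-2}$ is strictly increasing because $p>\bar p$ forces $m>2$; the factor $A(y/t)$ is positive (by $(A_{1})$) and nondecreasing in $t$ (this is (\ref{c2.4}), read with scaling parameter $1/t$); and the bracket $mA(x/t)-2\nabla A(x/t)\cdot(x/t)$ is positive, since $-\nabla A\cdot x\ge0$ by (\ref{c2.5}), and nondecreasing in $t$, since it is the function of $(A_{3})$ evaluated at the scale $1/t$. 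A product of positive factors, one of them strictly increasing and the others nondecreasing, is strictly increasing; integrating against the kernel $|u(x)|^{p}|u(y)|^{p}|x-y|^{-\mu}$, whose integral is positive because $u\not\equiv0$, shows $G$ is strictly increasing.

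The main obstacle I anticipate is the careful bookkeeping of the direction of monotonicity under the dilation $x\mapsto x/t$: the hypotheses $(A_{3})$ and the consequence (\ref{c2.4}) are stated in terms of the scaling factor itself, whereas here the relevant factor is $1/t$, so "nonincreasing in the scale" must be translated into "nondecreasing in $t$." A secondary point is verifying the positivity of $mA(x/t)-2\nabla A(x/t)\cdot(x/t)$, which is exactly what licenses multiplying the monotone factors and is where (\ref{c2.5}) enters. The remaining matters---finiteness of $D(t)$ via the Hardy--Littlewood--Sobolev inequality for $A\in L^{\infty}$ and $u\in H^{1}$, and the differentiation of $\gamma$ underlying $\gamma'=J(u^{t})/t$---are routine.
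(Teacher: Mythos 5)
Your proof is correct and takes essentially the same route as the paper: both reduce the lemma to the strict monotonicity in $t$ of the scaled nonlocal term (your $G(t)$ is exactly $2p\,h(t,u)$ in the paper's notation), deduced pointwise from $(A_1)$, $(A_3)$ and Lemma \ref{lm2.1}, and then conclude from the sign of $\frac{d}{dt}I(u^{t})$ on either side of $t=1$ using $J(u)=0$. The only differences are cosmetic: you get the derivative identity via the composition $(u^{t})^{s}=u^{ts}$, spell out the factorization argument that the paper compresses into one citation, and omit the boundary observations $g(0,u)>0$, $g(+\infty,u)=+\infty$, which are indeed redundant once the sign analysis is in place.
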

\begin{proof}
By $p\in\left(\frac{2N-\mu+2}{N}, \frac{2N-\mu}{(N-2)_{+}}\right)$, $(A_1)$ and $(\ref{c2.5})$, for $u \in \mathcal{M}(c)$, we have
 \begin{equation*}
\aligned
I(u)=&I(u)-\frac{1}{2}J(u) \\
=&\frac{1}{4p}\int\int\frac{ \left[(Np-2N+\mu-2)A(x)-2\nabla A(x)\cdot x\right]A(y)|u(x)|^p  |u(y)|^p}{|x-y|^\mu}dxdy
>0.
\endaligned
\end{equation*}
Fix a $u \in \mathcal{M}(c)$, let
 \begin{equation}\label{g}
\aligned
g(t,u):=&I(u)-I(u^t)
=\frac{1-t^2}{2}\|\nabla u\|_2^2-\frac{1}{2p}\int\int\frac{ A(x)|u(x)|^p A(y) |u(y)|^p}{|x-y|^\mu}dxdy\\
&+\frac{t^{Np-2N+\mu}}{2p}\int\int\frac{ A(t^{-1}x)|u(x)|^p A(t^{-1}y) |u(y)|^p}{|x-y|^\mu}dxdy.
\endaligned
\end{equation}
Then we have
\begin{equation}\label{2.14}
g(0,u)=I(u)>0,\quad g(1,u)=0,\quad g(+\infty,u)=+\infty.
\end{equation}
By $(\ref{I2})$, we have
 \begin{equation*}
\aligned
\frac{dg(t,u)}{dt}=-t\left(\|\nabla u\|_2^2-h(t,u)\right),\quad \frac{dg(t,u)}{dt}|_{t=1}=-J(u)=0,
\endaligned
\end{equation*}
where
 \begin{equation*}
 \aligned
 h(t,u)=&\frac{t^{Np-2N+\mu-2}}{2p} \cdot \\
 &\int\int\frac{\left[(Np-2N+\mu)A(t^{-1}x)-2\left(\nabla A(t^{-1}x)\cdot (t^{-1}x)\right)\right] A(t^{-1}y)|u(x)|^p |u(y)|^p}{|x-y|^\mu}dxdy.
 \endaligned
\end{equation*}
Using $(A_1)$, $(A_3)$ and Lemma \ref{lm2.1}, we have
 \begin{equation}\label{2.15}
 \aligned
 &h(0,u)=0;\quad h(t,u)\to+\infty~\text{as}~t\to+\infty; \\
\text{~the~function}~&t\mapsto h(t,u)~\text{strictly~increasing~on} ~(0,+\infty).
 \endaligned
 \end{equation}
Then by $(\ref{2.15})$, $t=1$ is the unique solution of equation $\frac{dg(t,u)}{dt}=0$. This together with $(\ref{2.14})$ implies the conclusion.
\end{proof}

\begin{lemma}\label{lm2.8} For any $u \in \mathcal{S}(c)$, there
exists a unique $t_u > 0$ such that $ u^{t_u} \in\mathcal{M}(c)$.
\end{lemma}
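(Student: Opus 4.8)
The plan is to reduce the membership $u^{t}\in\mathcal{M}(c)$ to a scalar equation in $t$ and then solve it by monotonicity. First I would fix $u\in\mathcal{S}(c)$; since $\|u\|_2^2=c>0$ and no nonzero constant lies in $L^2(\R^N)$, we have $\|\nabla u\|_2^2>0$. Using the scaling identity $(u^{t})^{s}=u^{ts}$ (immediate from $u^{t}(x)=t^{N/2}u(tx)$), for $w=u^{t}$ one computes
$$J(w)=\left.\frac{d}{ds}I(w^{s})\right|_{s=1}=t\left.\frac{d}{dr}I(u^{r})\right|_{r=t}.$$
Combining this with the derivative formula established in the proof of Lemma \ref{cr2.7}, namely $\frac{d}{dr}I(u^{r})=r\left(\|\nabla u\|_2^2-h(r,u)\right)$ (which is just $-\,dg/dt$ there), I obtain
\begin{equation*}
J(u^{t})=t^{2}\left(\|\nabla u\|_2^2-h(t,u)\right),\qquad t>0 .
\end{equation*}
Since $u^{t}\in\mathcal{S}(c)$ automatically, for $t>0$ the membership $u^{t}\in\mathcal{M}(c)$ is therefore equivalent to the single equation $h(t,u)=\|\nabla u\|_2^2$.

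Next I would invoke the qualitative properties of $h(\cdot,u)$. Although (\ref{2.15}) was recorded inside the proof of Lemma \ref{cr2.7} under the hypothesis $u\in\mathcal{M}(c)$, the explicit expression of $h(t,u)$ and the argument giving $h(0,u)=0$, $h(t,u)\to+\infty$ as $t\to+\infty$, and the strict monotonicity of $t\mapsto h(t,u)$ on $(0,+\infty)$ use only $(A_1)$, $(A_3)$ and Lemma \ref{lm2.1}, together with $u\neq0$; none of these depends on $u$ lying on $\mathcal{M}(c)$. Hence all three facts persist for the present $u\in\mathcal{S}(c)$. Consequently $t\mapsto h(t,u)$ is a continuous, strictly increasing bijection of $(0,+\infty)$ onto $(0,+\infty)$, so the equation $h(t,u)=\|\nabla u\|_2^2$ admits exactly one solution $t_u\in(0,+\infty)$. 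By the equivalence above, $u^{t_u}\in\mathcal{M}(c)$, and strict monotonicity forces $t_u$ to be the unique such value, which is precisely the assertion.

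The hard part is not the concluding intermediate-value step but the three input properties of $h(\cdot,u)$ that it rests on. The two limits come from the prefactor $t^{\,Np-2N+\mu-2}$, whose exponent is positive \emph{exactly} because $p>\bar p$, multiplied by an integral that stays bounded and bounded away from $0$; here one uses $A(t^{-1}x)\to A_\infty$ as $t\to0^{+}$, $A(t^{-1}x)\to A(0)$ as $t\to+\infty$, and $\nabla A(t^{-1}x)\cdot(t^{-1}x)\to0$ in both regimes, i.e. the boundedness and decay furnished by $(A_1)$ and (\ref{c2.5}) (note also that the integrand is strictly positive, since $(Np-2N+\mu)A-2\nabla A\cdot x>0$ by $(A_1)$ and (\ref{c2.5})). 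The strict monotonicity is where $(A_3)$ enters, through Lemma \ref{lm2.1}. If one preferred a self-contained proof rather than quoting (\ref{2.15}), I would re-derive these three properties directly from the displayed formula for $h(t,u)$, but no idea beyond what already appears in Lemma \ref{cr2.7} is required.
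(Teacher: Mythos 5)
Your proof is correct and is essentially the paper's own argument: both reduce the membership $u^{t}\in\mathcal{M}(c)$ to a scalar equation in $t$ via the formula for $J(u^{t})$ (the paper's (\ref{2.7.1})) and then conclude from the properties (\ref{2.15}) of $h(\cdot,u)$ --- limit $0$ at the origin, divergence at $+\infty$, strict monotonicity --- which, as you correctly observe, are derived using only $(A_1)$, $(A_3)$, Lemma \ref{lm2.1} and $u\neq 0$, so they persist for every $u\in\mathcal{S}(c)$, not just $u\in\mathcal{M}(c)$. The only difference is cosmetic: the paper obtains existence of $t_u$ by maximizing the fibering map $\zeta(t)=I(u^{t})$ and uses the monotonicity of $h$ only for uniqueness, whereas you obtain existence and uniqueness in one stroke from the intermediate value theorem applied to $h(\cdot,u)$; incidentally, your identity $J(u^{t})=t^{2}\left(\|\nabla u\|_{2}^{2}-h(t,u)\right)$ is the correct one, the paper's (\ref{2.7.1}) having dropped the factor $t^{2}$ on $\|\nabla u\|_{2}^{2}$ (a harmless typo, since for $t>0$ both versions lead to the same conclusion).
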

\begin{proof}
Let $u\in \mathcal{S}(c)$ be fixed and define a function $\zeta(t):= I(u^t)$ on $(0, \infty)$.
  By $(\ref{J})$, we have
 \begin{equation}\label{2.7.1}
\aligned
J(u^t)=&\|\nabla u\|_2^2-t^2h(t,u).
\endaligned
\end{equation}
Clearly, by $(\ref{I2})$ and $(\ref{2.7.1})$, we have
 \begin{equation*}
 \aligned
 &\zeta'(t)=0
 &\Longleftrightarrow J(u^t)=0~  \Longleftrightarrow~ u^t\in\mathcal{M}.
 \endaligned
\end{equation*}
It is easy to verify that $\lim\limits_{t\to0}\zeta(t) = 0$,
$\zeta(t) > 0$ for $t > 0$ small and $\zeta(t) < 0$ for $t$ large. Therefore $\max\limits_{t\in[0,\infty)}\zeta(t)$ is achieved at
some $t_u > 0$ so that $\zeta'(t_u) = 0$ and $u^{t_u}\in\mathcal{M}$.
And we have from $(\ref{2.15})$ and $(\ref{2.7.1})$ that $t_u$ is unique for any $u\in S(c)$.
\end{proof}

Combining Lemma \ref{cr2.7} and Lemma \ref{lm2.8}, we get the following result.
\begin{lemma}\label{lm2.10}
$$
\inf _{u \in \mathcal{M}(c)} I(u)=m(c)=\inf _{u \in \mathcal{S}(c)} \max _{t>0} I\left( u^{t}\right).
$$
\end{lemma}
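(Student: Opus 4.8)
The plan is to extract the equality from the two preceding lemmas using the elementary fact that the fiber map $t \mapsto u^t$ is a one-parameter semigroup under composition. First I would record that $(u^s)^t = u^{st}$ for all $s,t>0$, which follows directly from $u^t(x)=t^{N/2}u(tx)$; consequently, for each fixed $s>0$ the change of variable $\tau = st$ gives $\max_{t>0} I\big((u^s)^t\big) = \max_{\tau>0} I(u^\tau)$. Throughout I write $m(c) := \inf_{u \in \mathcal{M}(c)} I(u)$, so that the task is to show $m(c) = \inf_{u \in \mathcal{S}(c)} \max_{t>0} I(u^t)$.

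For the inequality $m(c) \geq \inf_{u \in \mathcal{S}(c)} \max_{t>0} I(u^t)$, I would take any $u \in \mathcal{M}(c)$. Since $\mathcal{M}(c) \subset \mathcal{S}(c)$ and Lemma \ref{cr2.7} asserts $I(u) > I(u^t)$ for every $t \neq 1$, the maximum of $t \mapsto I(u^t)$ is attained (uniquely, at $t=1$), so $I(u) = \max_{t>0} I(u^t) \geq \inf_{w \in \mathcal{S}(c)} \max_{t>0} I(w^t)$. Taking the infimum over $u \in \mathcal{M}(c)$ yields the bound.

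For the reverse inequality, I would fix an arbitrary $u \in \mathcal{S}(c)$ and invoke Lemma \ref{lm2.8} to obtain the unique $t_u>0$ with $u^{t_u} \in \mathcal{M}(c)$. Applying Lemma \ref{cr2.7} to the point $v := u^{t_u}$ and using the semigroup identity,
$$
I(u^{t_u}) = \max_{t>0} I\big((u^{t_u})^t\big) = \max_{\tau>0} I(u^\tau).
$$
Since $u^{t_u} \in \mathcal{M}(c)$, its energy is bounded below by $m(c)$, hence $\max_{\tau>0} I(u^\tau) = I(u^{t_u}) \geq m(c)$. Taking the infimum over $u \in \mathcal{S}(c)$ gives $\inf_{u \in \mathcal{S}(c)} \max_{t>0} I(u^t) \geq m(c)$, and combining with the previous step closes the chain of equalities.

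I expect no serious obstacle here, since Lemmas \ref{cr2.7} and \ref{lm2.8} do the substantive work; the only points requiring care are the semigroup property of the scaling (so that the maximum over the fiber through $u^{t_u}$ coincides with the maximum over the fiber through $u$) and the observation that Lemma \ref{cr2.7} guarantees the fiber maximum is genuinely \emph{attained} at $t=1$, rather than merely being a supremum, so that $I(v) = \max_{t>0}I(v^t)$ for $v \in \mathcal{M}(c)$ is a true identity.
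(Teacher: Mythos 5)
Your proof is correct and takes exactly the route the paper intends: the paper gives no detailed argument for this lemma, stating only that it follows by ``combining Lemma \ref{cr2.7} and Lemma \ref{lm2.8},'' and your write-up is precisely the fleshed-out version of that combination. The two points you single out for care---the semigroup identity $(u^s)^t=u^{st}$ identifying the fiber through $u^{t_u}$ with the fiber through $u$, and the fact that Lemma \ref{cr2.7} makes the fiber maximum genuinely attained at $t=1$---are indeed the only details the paper leaves implicit.
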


\begin{lemma}\label{lm2.12} The function $c\mapsto m(c)$ is
nonincreasing on $(0, \infty)$. In particular, if $m(c)$ is achieved, then $m(c) > m(c')$ for
any $c'> c$.
\end{lemma}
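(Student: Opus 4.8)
The plan is to use the variational characterization $m(c)=\inf_{u\in\mathcal S(c)}\max_{t>0}I(u^{t})$ from Lemma~\ref{lm2.10}. Fix $0<c<c'$ and set $\beta:=(c'/c)^{1/N}>1$. It suffices to associate to every $u\in\mathcal S(c)$ a competitor $v\in\mathcal S(c')$ with $\max_{t>0}I(v^{t})\le\max_{t>0}I(u^{t})$: then $m(c')\le\max_{t>0}I(v^{t})\le\max_{t>0}I(u^{t})$, and taking the infimum over $u\in\mathcal S(c)$ gives $m(c')\le m(c)$. The natural competitor is the \emph{mass dilation} $v(x):=u(x/\beta)$, for which $\|v\|_{2}^{2}=\beta^{N}c=c'$, so $v\in\mathcal S(c')$. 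For the strict statement I will take $u=u_{c}$ to be a minimizer when $m(c)$ is achieved and track that all the estimates below are strict.

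Next I would compute $I(v^{t})$. A change of variables as in $(\ref{I2})$ gives
\[
I(v^{t})=\frac{t^{2}\beta^{N-2}}{2}\|\nabla u\|_{2}^{2}-\frac{t^{\theta}\beta^{2N-\mu}}{2p}\int\int\frac{A(\beta\xi/t)A(\beta\eta/t)|u(\xi)|^{p}|u(\eta)|^{p}}{|\xi-\eta|^{\mu}}\,d\xi\,d\eta,
\]
where $\theta:=Np-2N+\mu$. The key algebraic move is the substitution $t=\beta^{1-N/2}\rho$, chosen precisely so that the gradient coefficient becomes $\rho^{2}/2$, matching that of $I(u^{\rho})$. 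Under this substitution the nonlocal coefficient becomes $\beta^{(1-N/2)\theta+2N-\mu}\rho^{\theta}$ and the arguments of $A$ turn into $\beta^{N/2}\xi/\rho$, so the whole term is to be compared with the corresponding term $\tfrac{\rho^{\theta}}{2p}\int\int A(\xi/\rho)A(\eta/\rho)\cdots$ appearing in $I(u^{\rho})$.

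The crucial estimate is where condition $(A_4)$ enters. Writing $\gamma_{0}:=\tfrac{2p-\theta}{2}>0$ (positive because $p<2^{*}_{\mu}$), the strict monotonicity of $\tau\mapsto\tau^{\gamma_{0}}A(\tau x)$ applied with $\tau=\beta^{N/2}>1$ yields $A(\beta^{N/2}y)>\beta^{-N\gamma_{0}/2}A(y)$ for every $y$; since $A>0$ by $(A_1)$, multiplying the two factors gives pointwise $A(\beta^{N/2}\xi/\rho)A(\beta^{N/2}\eta/\rho)>\beta^{-N\gamma_{0}}A(\xi/\rho)A(\eta/\rho)$, and integrating against the positive kernel $|u(\xi)|^{p}|u(\eta)|^{p}/|\xi-\eta|^{\mu}$ (strictly positive on a set of positive measure since $u\neq0$) the inequality stays strict. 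The miracle is the exponent count: the remaining power of $\beta$ is
\[
E:=\Big(1-\tfrac{N}{2}\Big)\theta+2N-\mu-N\gamma_{0}=\theta+2N-\mu-Np=0,
\]
using $N\gamma_{0}=Np-\tfrac{N\theta}{2}$ and $\theta=Np-2N+\mu$. Hence the nonlocal term of $v$ strictly exceeds that of $u$, giving $I(v^{\beta^{1-N/2}\rho})<I(u^{\rho})$ for every $\rho>0$. Since $\rho\mapsto t=\beta^{1-N/2}\rho$ is a bijection of $(0,\infty)$ and the fiber maximum is attained (Lemma~\ref{lm2.8}), we obtain $\max_{t>0}I(v^{t})<\max_{\rho>0}I(u^{\rho})$, whence $m(c')\le m(c)$ after the infimum; and if $m(c)$ is achieved at $u_{c}\in\mathcal M(c)$, then $\max_{\rho>0}I(u_{c}^{\rho})=I(u_{c})=m(c)$ by Lemma~\ref{cr2.7}, so the chain gives the strict $m(c')<m(c)$.

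The main obstacle, and the only delicate point, is precisely this comparison of the two \emph{nonlocal} double integrals. In the autonomous case $A\equiv A_\infty$ the scaling factors out and monotonicity is immediate, but here the dilation moves the arguments of $A$ in the unfavorable direction (toward larger scale, where $A$ is smaller), so a naive pointwise bound fails. What rescues the argument is that the exponent $\gamma_{0}$ in $(A_4)$ is tuned exactly to the scaling, producing the cancellation $E=0$; this is why $(A_4)$ is indispensable and why I would isolate the exponent computation as the heart of the proof.
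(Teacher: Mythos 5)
Your proof is correct and is essentially the paper's own argument in different clothing: the paper also builds a rescaled competitor in $\mathcal{S}(c')$ (using the gradient-preserving dilation $\xi^{(2-N)/2}u(\xi^{-1}x)$, $\xi=\sqrt{c'/c}$, rather than your $u(x/\beta)$ — composed with the fiber map $t^{N/2}u(t\cdot)$ these generate exactly the same family $\sqrt{c'/c}\,s^{N/2}u(s\cdot)$), and closes it with the identical key step, namely $(A_4)$ applied to both factors of $A$ together with the exponent cancellation $(2-N)p+2N-\mu-(2p-(Np-2N+\mu))=0$. The only cosmetic difference is bookkeeping: the paper compares $I(v_n^{t_n})$ with $I(u_n^{t_n})$ at the projection $t_n$ onto $\mathcal{M}(c')$ and then invokes Lemma~\ref{cr2.7}, whereas you compare along the whole fiber and pass to maxima via Lemmas~\ref{lm2.10} and~\ref{lm2.8}.
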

\begin{proof}
For any $c_2 > c_1 > 0$, it follows that there exists $\{u_n\}\subset\mathcal{M}(c_1)$ such that
$$I(u_n)<m(c_1)+\frac{1}{n}.$$
Let $\xi := \sqrt{c_2/c_1} \in (1,+\infty)$ and $v_n(x) := \xi^{(2-N)/2}u_n(\xi^{-1}x)$. Then $\|v_n\|_2^2 = c_2$ and
$\|\nabla v_n\|_2 = \|\nabla u_n\|_2$. By Lemma \ref{lm2.8}, there exists $t_n > 0$ such that $v_n^{t_n}\in\mathcal{M}(c_2)$.
Then it follows from $(A_4)$, $(\ref{I2})$, and Lemma \ref{cr2.7} that
\begin{equation*}
\aligned
m\left(c_{2}\right) \leq& I\left(v_{n}^{t_{n}}\right)
=I\left(u_{n}^{t_{n}}\right)+\frac{t_{n}^{Np-2N+\mu}}{2p}\cdot \\
 &\int\int\frac{\left[A\left(t_{n}^{-1} x\right)A\left(t_{n}^{-1} y\right) -\xi^{(2-N)p+2N-\mu} A\left(\xi t_{n}^{-1} x\right)A\left(\xi t_{n}^{-1} y\right)\right]|u_{n}(x)|^{p}|u_{n}(y)|^{p}}{|x-y|^\mu}dxdy \\
 \leq& I\left(u_{n}^{t_{n}}\right)
 \leq I\left(u_{n}\right)<m\left(c_{1}\right)+\frac{1}{n},
 \endaligned
\end{equation*}
which shows that $m(c_2) \leq m(c_1)$ by letting $n \to+\infty$.

Next, we assume that $m(c)$ is achieved, i.e., there exists $\tilde{u}\in\mathcal{M}(c)$ such that $I(\tilde{u})=m(c)$. For any given $c' > c$. Let $\tilde{\xi}= c'/c \in (1,+\infty)$ and $\tilde{v}(x):= \tilde{\xi}^{(2-N)/2}
\tilde{u}(\tilde{\xi}^{-1}x)$.
Then $\|\tilde{u}\|_2^2 = c'$ and $\|\tilde{u}\|_2^2= \|\tilde{v}\|_2^2$. By Lemma \ref{lm2.8}, there exists $t_0 > 0$ such that
$\tilde{v}^{t_0} \in \mathcal{M}(c')$. Then it follows from $(A_4)$, $(\ref{I2})$, and Lemma 3.2 that
\begin{equation*}
\aligned
m\left(c'\right) \leq& I\left(\tilde{v}^{t_{0}}\right)
=I\left(\tilde{u}^{t_{0}}\right)+\frac{t_{0}^{Np-2N+\mu}}{2p}\cdot \\
 &\int\int\frac{\left[A\left(t_{0}^{-1} x\right)A\left(t_{0}^{-1} y\right) -\tilde{\xi}^{(2-N)p+2N-\mu} A\left(\tilde{\xi} t_{0}^{-1} x\right)A\left(\tilde{\xi} t_{0}^{-1} y\right)\right]|\tilde{u}(x)|^{p}|\tilde{u}(y)|^{p}}{|x-y|^\mu}dxdy \\
 <& I\left(\tilde{u}^{t_{0}}\right)\leq
  I\left(\tilde{u}\right)=m\left(c\right),
 \endaligned
\end{equation*}
which shows that $m(c') < m(c)$.

\end{proof}

\begin{lemma}\label{lm2.13}
(i) There exists $\rho_0 > 0$ such that $\|\nabla u\|_2\geq \rho_0$, $\forall u\in\mathcal{M}(c)$; \\
(ii) $m(c) = \inf\limits_{u\in\mathcal{M}(c)}I(u) > 0$.
\end{lemma}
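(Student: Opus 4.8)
The plan is to establish the two parts in sequence, using the $L^2$-supercriticality $p>\bar{p}$ (equivalently $Np-2N+\mu>2$) as the essential mechanism, together with the constraint $J(u)=0$ on $\mathcal{M}(c)$ and the positivity supplied by $(A_1)$ and $(\ref{c2.5})$.

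For part (i), I would start from $J(u)=0$, which by $(\ref{J})$ reads
$$\|\nabla u\|_2^2=\frac{1}{2p}\int\int\frac{[(Np-2N+\mu)A(x)-2\nabla A(x)\cdot x]A(y)|u(x)|^p|u(y)|^p}{|x-y|^\mu}\,dxdy.$$
By $(A_1)$ the function $A$ is bounded with $A(x)\ge A_\infty$, while $(\ref{c2.5})$ gives $0\le -\nabla A(x)\cdot x$ with $-\nabla A(x)\cdot x\to 0$ at infinity, so the bracket $(Np-2N+\mu)A(x)-2\nabla A(x)\cdot x$ is bounded above by a constant. Hence the right-hand side is controlled by $C\int\int\frac{|u(x)|^p|u(y)|^p}{|x-y|^\mu}\,dxdy$, and applying Hardy--Littlewood--Sobolev followed by the Gagliardo--Nirenberg inequality $(\ref{GN2})$ with $q=\frac{2Np}{2N-\mu}$ (for which $\frac{N(q-2)}{2q}=\frac{Np-2N+\mu}{2p}$) yields
$$\|\nabla u\|_2^2\le C\,c^{\,p-\frac{Np-2N+\mu}{2}}\,\|\nabla u\|_2^{\,Np-2N+\mu}.$$
Since $p>\bar{p}$ forces $Np-2N+\mu>2$, dividing by $\|\nabla u\|_2^2$ (positive, as $u\not\equiv 0$) and solving for $\|\nabla u\|_2$ produces a lower bound $\|\nabla u\|_2\ge\rho_0>0$ depending only on $c,N,\mu,p$ and $\|A\|_\infty$, which is (i).

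For part (ii), the idea is to convert pointwise sign information into a uniform multiple of $\|\nabla u\|_2^2$. On $\mathcal{M}(c)$ one has $I(u)=I(u)-\tfrac12 J(u)$, which, as in the proof of Lemma \ref{cr2.7}, equals
$$I(u)=\frac{1}{4p}\int\int\frac{[(Np-2N+\mu-2)A(x)-2\nabla A(x)\cdot x]A(y)|u(x)|^p|u(y)|^p}{|x-y|^\mu}\,dxdy.$$
The key pointwise estimate is that, because $-\nabla A(x)\cdot x\ge 0$ and $Np-2N+\mu>2$,
$$(Np-2N+\mu-2)A(x)-2\nabla A(x)\cdot x\ \ge\ \frac{Np-2N+\mu-2}{Np-2N+\mu}\,[(Np-2N+\mu)A(x)-2\nabla A(x)\cdot x],$$
the difference of the two sides being exactly $\frac{4}{Np-2N+\mu}\,(-\nabla A(x)\cdot x)\ge 0$. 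Integrating this against the nonnegative kernel $\frac{A(y)|u(x)|^p|u(y)|^p}{|x-y|^\mu}$ and recalling that on $\mathcal{M}(c)$ the $\frac{1}{2p}$-weighted $J$-integrand equals $\|\nabla u\|_2^2$, I obtain
$$I(u)\ge\frac{Np-2N+\mu-2}{Np-2N+\mu}\cdot\frac12\|\nabla u\|_2^2\ \ge\ \frac{Np-2N+\mu-2}{2(Np-2N+\mu)}\,\rho_0^2,$$
where the last step invokes part (i). Taking the infimum over $\mathcal{M}(c)$ gives $m(c)>0$, and the identity $m(c)=\inf_{\mathcal{M}(c)}I$ is already recorded in Lemma \ref{lm2.10}.

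The only delicate point is the choice of the coefficient $\frac{Np-2N+\mu-2}{Np-2N+\mu}$ in the pointwise comparison of (ii); everything else is a routine combination of Hardy--Littlewood--Sobolev, Gagliardo--Nirenberg and the sign conditions. I expect the main obstacle to be essentially bookkeeping: confirming that the bracket $(Np-2N+\mu)A(x)-2\nabla A(x)\cdot x$ is genuinely bounded on all of $\R^N$ (using that $-\nabla A(x)\cdot x$ is continuous, nonnegative, and vanishes at infinity by $(\ref{c2.5})$, hence bounded), so that the HLS/GN chain in (i) closes with a constant independent of $u\in\mathcal{M}(c)$.
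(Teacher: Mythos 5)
Your proposal is correct and follows essentially the same route as the paper: part (i) is the identical chain (constraint identity from $J(u)=0$, boundedness of the bracket via $(A_1)$ and $(\ref{c2.5})$, Hardy--Littlewood--Sobolev, Gagliardo--Nirenberg, then $Np-2N+\mu>2$). For part (ii) the paper simply computes $I(u)-\frac{1}{Np-2N+\mu}J(u)$ and drops the nonnegative term involving $-\nabla A(x)\cdot x$, which is an algebraic repackaging of your $I(u)-\frac12 J(u)$ plus pointwise comparison; both yield the same lower bound $\left(\frac12-\frac{1}{Np-2N+\mu}\right)\rho_0^2>0$.
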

\begin{proof}
(i) For $u\in\mathcal{M}(c)$,
$$\|\nabla u\|_2^2=\frac{1}{2p}\int\int\frac{ \left[(Np-2N+\mu)A(x)-2\nabla A(x)\cdot x\right]A(y) |u(x)|^p |u(y)|^p}{|x-y|^\mu}dxdy.$$
By $(A_1)$, Lemma 3.1, and Hardy--Littlewood--Sobolev inequality,
$$\|\nabla u\|_2^2\leq C\int\int\frac{ |u(x)|^p |u(y)|^p}{|x-y|^\mu} \mathrm{d} x\mathrm{d} y \leq C(N,\mu)\|u\|_{\frac{2Np}{2N-\mu}}^{2p}.$$
On the other hand, we have from Gagliardo-Nirenberg inequality that
 \begin{equation*}
 \aligned
\| u\|_s\leq C(N,s)\|\nabla u\|_2^{\frac{N(s-2)}{2s}}\|u\|_2^{\frac{2N-(N-2)s}{2s}},\quad\forall u\in H^1(\R^N,\R),\quad s\in\left[2,2^*\right).
\endaligned
\end{equation*}
Therefore,
\begin{equation}\label{cc}
\|\nabla u\|_2^{Np-2N+\mu-2}\geq C(N,p,\mu) \|u\|_2^{Np-2N+\mu-2p}=C(N,p,\mu)c^{\frac{Np-2N+\mu-2p}{2}}.
\end{equation}
Since $Np-2N+\mu>2$, there exists $$\rho_0=C(N,p,\mu)^{\frac{1}{Np-2N+\mu-2}}c^{\frac{Np-2N+\mu-2p}{2(Np-2N+\mu-2)}} > 0$$ such that $\|\nabla u\|_2\geq\rho_0$.
\vskip2mm

(ii) For $u\in\mathcal{M}(c)$, it follows from $-\left(\nabla A(x)\cdot x\right)A(y)\geq0$ that
 \begin{equation}\label{2.10.ii}
 \aligned
I(u)&=I(u)-\frac{1}{Np-2N+\mu}J(u)\\
&=\left(\frac{1}{2}-\frac{1}{Np-2N+\mu}\right)\|\nabla u\|_2^2-\frac{1}{(Np-2N+\mu)p}\int\int\frac{\left(\nabla A(x)\cdot x\right)A(y)|u(x)|^p |u(y)|^p}{|x-y|^\mu}dxdy  \\
&\geq \left(\frac{1}{2}-\frac{1}{Np-2N+\mu}\right)\rho_0^2.
\endaligned
\end{equation}
Therefore $m(c) = \inf\limits_{u\in\mathcal{M}(c)}I(u) > 0$.
\end{proof}

By Lemma \ref{lm2.10}, we have
\begin{equation}\label{A2}
m(c) \leq m_\infty(c).
\end{equation}
With the help of (\ref{A2}), we can show the following lemma.
\begin{lemma}\label{lm2.15}  $m(c)$ is achieved.
\end{lemma}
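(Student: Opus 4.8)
The plan is to take a minimizing sequence $\{u_n\}\subset\mathcal{M}(c)$ with $I(u_n)\to m(c)$, extract a minimizer, and compare throughout with the limit functional $I_\infty$. First I would check boundedness: from the identity (\ref{2.10.ii}) in the proof of Lemma \ref{lm2.13}(ii), together with $Np-2N+\mu>2$ and $-\nabla A(x)\cdot x\ge0$ (see (\ref{c2.5})), one gets $I(u_n)\ge\left(\frac{1}{2}-\frac{1}{Np-2N+\mu}\right)\|\nabla u_n\|_2^2$, so $\{\|\nabla u_n\|_2\}$ is bounded and, since $\|u_n\|_2^2=c$, $\{u_n\}$ is bounded in $H^1(\R^N,\R)$. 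Passing to a subsequence, $u_n\rightharpoonup\bar u$ in $H^1$, $u_n\to\bar u$ in $L^q_{\mathrm{loc}}$ for $1\le q<2^*$ and a.e. I would then split the analysis according to whether $m(c)=m_\infty(c)$ or $m(c)<m_\infty(c)$, using (\ref{A2}).

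In the case $m(c)=m_\infty(c)$ the minimizer can be produced directly from the limit problem. Working with the characterization $m_\infty(c)=\inf_{\mathcal{S}(c)}\max_{t>0}I_\infty(u^t)$ (Lemma \ref{lm2.10} for $I_\infty$) and symmetric rearrangement, it suffices to minimize over radial functions, where $H^1_{\mathrm{rad}}(\R^N,\R)\hookrightarrow L^q(\R^N,\R)$ is compact for $2<q<2^*$; this yields a radial $w\in\mathcal{M}_\infty(c)$ with $I_\infty(w)=m_\infty(c)$. By Lemma \ref{lm2.8} there is $t_w>0$ with $w^{t_w}\in\mathcal{M}(c)$, and using $A\ge A_\infty$ (hence $I\le I_\infty$) and Lemma \ref{cr2.7} applied to $I_\infty$,
\[
m(c)\le I(w^{t_w})\le I_\infty(w^{t_w})\le\max_{t>0}I_\infty(w^t)=I_\infty(w)=m_\infty(c)=m(c),
\]
so $w^{t_w}$ achieves $m(c)$.

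In the case $m(c)<m_\infty(c)$ I would instead prove that the minimizing sequence itself is compact. First, $\bar u\neq0$: if $\bar u=0$, then since $u_n\rightharpoonup0$ the differences $I(u_n)-I_\infty(u_n)$ and $J(u_n)-J_\infty(u_n)$ tend to $0$ (by the argument giving (\ref{c3.10})), so $I_\infty(u_n)\to m(c)$ and $J_\infty(u_n)\to0$; rescaling $u_n$ onto $\mathcal{M}_\infty(c)$ by Lemma \ref{lm2.8} (the rescaling parameter tends to $1$ because $\|\nabla u_n\|_2\ge\rho_0$ by Lemma \ref{lm2.13}(i) and $J_\infty(u_n)\to0$) gives $m_\infty(c)\le m(c)$, contradicting $m(c)<m_\infty(c)$. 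Set $c_1:=\|\bar u\|_2^2\in(0,c]$ and $w_n:=u_n-\bar u\rightharpoonup0$. Applying the nonlocal Brezis--Lieb lemma (Lemma \ref{lm2.11}) to the double integrals in both $I$ and $J$, together with $I(w_n)=I_\infty(w_n)+o(1)$ and $J(w_n)=J_\infty(w_n)+o(1)$, I would obtain the splittings $m(c)=I(\bar u)+\lim_n I_\infty(w_n)$ and $J(\bar u)+\lim_n J_\infty(w_n)=0$.

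The main obstacle is to show there is no mass loss, i.e.\ $c_1=c$. Unlike the subcritical case, where $\sigma$ is a genuine infimum and one may use $I(v)\ge\sigma(\|v\|_2^2)$, here $m$ is a minimum over the Pohozaev-type manifold $\mathcal{M}(c)$ and the analogous bound fails, so the termwise comparison $m(c)\ge m(c_1)+m_\infty(c-c_1)$ is not directly available. To get around this I would exploit the coercivity identity
\[
I_\infty(v)-\frac{1}{Np-2N+\mu}J_\infty(v)=\left(\frac{1}{2}-\frac{1}{Np-2N+\mu}\right)\|\nabla v\|_2^2\ge0,
\]
and its counterpart for $I$ (whose extra term is nonnegative because $-\nabla A(x)\cdot x\ge0$), to sign the energy of each piece, and combine this with the fibering maps $t\mapsto I(u^t)$ and the strict monotonicity of Lemma \ref{lm2.12}; note that the needed strict subadditivity $m(c)<m(c_1)+m_\infty(c-c_1)$ is automatic once both pieces are positive, since $m,m_\infty>0$ and $m(c)\le\min\{m(c_1),m_\infty(c-c_1)\}$. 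A concentration-compactness iteration then rules out both a vanishing and a concentrating (along $z_n\to\infty$) tail $w_n$, forcing $c_1=c$. Finally $u_n\to\bar u$ in $L^2$, hence in $L^q$ for $2\le q<2^*$, and weak lower semicontinuity together with the splitting gives $I(\bar u)\le m(c)$ with $\bar u\in\mathcal{M}(c)$, whence $I(\bar u)=m(c)$ and $\bar u$ is the desired minimizer. I expect this no-mass-loss step to be the crux, precisely because $I\neq I_\infty$ forces the delicate comparison with the limit problem.
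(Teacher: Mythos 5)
Your architecture is genuinely different from the paper's: you split according to whether $m(c)=m_\infty(c)$ or $m(c)<m_\infty(c)$, while the paper splits according to whether the weak limit $\bar u$ of the minimizing sequence vanishes, handling $\bar u=0$ by Lions' concentration-compactness and translations $\hat u_n=u_n(\cdot+y_n)$, and closing both cases with the ``standard argument'' of \cite[Lemma 2.15]{MR4081327}. Several of your steps are correct, and one is arguably cleaner than the paper's: the boundedness argument; the proof that $\bar u\neq0$ when $m(c)<m_\infty(c)$ (projecting $u_n$ onto $\mathcal{M}_\infty(c)$ with parameter $t_n\to1$, which is legitimate because $\|\nabla u_n\|_2\geq\rho_0$ and $J_\infty(u_n)\to0$, and concluding $m_\infty(c)\leq m(c)$); and the chain $m(c)\leq I(w^{t_w})\leq I_\infty(w^{t_w})\leq I_\infty(w)=m_\infty(c)=m(c)$ once a minimizer $w$ of the limit problem is in hand.

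There are, however, two genuine gaps. The main one is that the step you yourself call the crux --- no mass loss --- is not proved, and the two mechanisms you propose for it do not close it. The strict subadditivity $m(c)<m(c_1)+m_\infty(c-c_1)$ is indeed automatic, but it is inert here, for exactly the reason you identified two sentences earlier: since $\bar u$ and $w_n$ need not lie on $\mathcal{M}(c_1)$ and $\mathcal{M}_\infty(c-c_1)$, there are no lower bounds $I(\bar u)\geq m(c_1)$ and $\lim_n I_\infty(w_n)\geq m_\infty(c-c_1)$ to play against the splitting, so subadditivity never enters; nor is there any ``concentration-compactness iteration'' to run on the tail. What actually closes the argument (this is the content of the citation the paper makes) is a sign dichotomy on $J(\bar u)$, using the functionals $\Psi=I-\frac12 J$ and $\Psi_\infty=I_\infty-\frac12 J_\infty$ from \x{A3} and the display preceding \x{2.11.6}. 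If $J(\bar u)>0$, then $J_\infty(w_n)<0$ for large $n$, so $w_n$ projects onto $\mathcal{M}_\infty(\|w_n\|_2^2)$ with parameter $t_n\in(0,1)$; since $\Psi_\infty(v^t)=t^{Np-2N+\mu}\Psi_\infty(v)\geq0$, this gives $m(c)\leq m_\infty(\|w_n\|_2^2)\leq\Psi_\infty(w_n^{t_n})\leq\Psi_\infty(w_n)=m(c)-\Psi(\bar u)+o(1)$, contradicting $\Psi(\bar u)>0$. Hence $J(\bar u)\leq0$; one then projects $\bar u$ onto $\mathcal{M}(c_1)$ with parameter $\bar t\leq1$ and, using the fiber monotonicity of $\Psi$ coming from $(A_3)$--$(A_4)$ and Lemma \ref{lm2.1} (this is where \cite[Lemma 2.15]{MR4081327} does real work), obtains $m(c)\leq m(c_1)\leq\Psi(\bar u^{\bar t})\leq\Psi(\bar u)\leq m(c)$, so that $m(c_1)=m(c)$ is \emph{achieved}; only at this point does the strict monotonicity clause of Lemma \ref{lm2.12} --- the one ingredient you do name --- force $c_1=c$. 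Without this $J$-sign analysis your sketch cannot reach the point where Lemma \ref{lm2.12} applies. The second gap concerns your case $m(c)=m_\infty(c)$: the compactness of $H^1_{\mathrm{rad}}\hookrightarrow L^q$ fails for $N=1$, whereas Theorem \ref{th1.2} is stated for all $N\geq1$; moreover, even for $N\geq2$ that embedding is compact only for $2<q<2^*$ and not for $q=2$, so the radial minimization of $m_\infty(c)$ has its own mass-loss problem and again requires the projection-plus-Lemma-\ref{lm2.12} argument --- it is not free, as your one-sentence treatment suggests.
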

\begin{proof}
By  Lemmas \ref{lm2.8} and \ref{lm2.13}, we have we have $\mathcal{M}(c)\neq\emptyset$ and $m(c)> 0$.
Let $\{u_n\}\subset \mathcal{M}(c)$ be such that $I(u_n)\to m(c)$. Since $J(u_n) = 0$, it follows from $(\ref{2.10.ii})$ that
 \begin{equation*}
 m(c)+o(1)=I(u_n)\geq \left(\frac{1}{2}-\frac{1}{Np-2N+\mu}\right)\|\nabla u_n\|_2^2.
 \end{equation*}
 This shows that $\{\|\nabla u_n\|_2\}$ is bounded. Passing to a subsequence, we have $$u_n\rightharpoonup \bar{u}\text{ in }
H^1(\R^N,\R),\quad
u_n \to\bar{u}\text{ in }L^s_{\text{loc}}(\R^N,\R)\text{ for }2 \leq s < 2^*,\quad\text{and }u_n\to\bar{u} ~a.e.\text{ in }\R^N.$$

 Case (i) $\bar{u}= 0$. Let $B_R(0)$ be a ball in $\R^N$ with the origin as its center and $R$ as its radius, by Lemma 2.1 and Hardy--Littlewood--Sobolev inequality, we have
\begin{equation}\label{1111}
\aligned
&\left|\int\int\frac{(\nabla A(x)\cdot x)A(y)|u_n(x)|^p|u_n(y)|^p}{|x-y|^\mu}dxdy\right|\\
=&\left|\int\left(\int_{x\in B_R(0)}+\int_{x\in B^c_R(0)}\right)\frac{(\nabla A(x)\cdot x)A(y)|u_n(x)|^p|u_n(y)|^p}{|x-y|^\mu}dxdy\right|\\
\leq&\left[\left(\int_{x\in B_R(0)}+\int_{x\in B^c_R(0)}\right)\left|\nabla A(x)\cdot x\right|^{\frac{2N}{2N-\mu}}|u_n(x)|^{\frac{2Np}{2N-\mu}}dx\right]^{\frac{2N-\mu}{2N}} \\
&\cdot\left[\int |A(y)|^{\frac{2N}{2N-\mu}}|u_n(y)|^{\frac{2Np}{2N-\mu}}dy\right]^{\frac{2N-\mu}{2N}}\\
\to&0,\quad \text{as}~R\to\infty,~n\to\infty.
\endaligned
\end{equation}
Similarly, by $\lim\limits_{|x|\to\infty}A(x)=A_\infty$, we have
 \begin{equation}\label{2222}
 \aligned
&\int\int\frac{(A_\infty^2-A(x)A(y))|u_n(x)|^p|u_n(y)|^p}{|x-y|^\mu}dxdy \\
=&\int\int\frac{A_\infty(A_\infty-A(x))|u_n(x)|^p|u_n(y)|^p}{|x-y|^\mu}dxdy+\int\int\frac{A(x)(A_\infty-A(y))|u_n(x)|^p|u_n(y)|^p}{|x-y|^\mu}dxdy\\
=&\int\left(\int_{x\in B_R(0)}+\int_{x\in B^c_R(0)}\right)\frac{A_\infty(A_\infty-A(x))|u_n(x)|^p|u_n(y)|^p}{|x-y|^\mu}dxdy\\
~~~~&+\int\left(\int_{y\in B_R(0)}+\int_{y\in B^c_R(0)}\right)\frac{A(x)(A_\infty-A(y))|u_n(x)|^p|u_n(y)|^p}{|x-y|^\mu}dxdy\\
\to&0,\quad \text{as}~R\to\infty,~n\to\infty.
\endaligned
\end{equation}
Therefore, as $n\to\infty$, it follows from (\ref{1111}) and (\ref{2222}) that
  \begin{equation}\label{c2.26}
I_\infty(u_n) \to m(c),\quad J_\infty(u_n)\to 0.
\end{equation}
where
  \begin{equation*}
 \aligned
 J_\infty(u_n)
=\frac{dI_\infty(u_n^t)}{dt}|_{t=1}
=\|\nabla u_n\|_2^2-\frac{Np-2N+\mu}{2p}\int\int\frac{A_\infty^2|u_n(x)|^p  |u_n(y)|^p}{|x-y|^\mu}dxdy.
\endaligned
\end{equation*}
By Lemma \ref{lm2.13}-(i) and $(\ref{c2.26})$, we have
  \begin{equation}\label{c2.27}
\rho_0^2\leq \|\nabla u_n\|_2^2=\frac{Np-2N+\mu}{2p}\int\int\frac{A_\infty^2|u_n(x)|^p  |u_n(y)|^p}{|x-y|^\mu}dxdy+o(1).
\end{equation}
Using $(\ref{c2.27})$ and Lions' concentration compactness principle \cite{MR778970,MR778974}, we can easily prove that there exist $\delta> 0$ and $\{y_n\}\subset \R^N$
such that $\int_{B_1(y_n)}|u_n|^2dx > \frac{\delta}{2}.$ Let $\hat{u}_n(x) = u_n(x + y_n)$. Then $\|\hat{u}_n\| = \|u_n\|$,
$
\int_{B_1(0)}|\hat{u}_n|^2dx > \frac{\delta}{2},
$
 and
  \begin{equation}\label{2.11.3}
I_\infty(\hat{u}_n)\to m(c),~~J_\infty(\hat{u}_n)\to0.
 \end{equation}
Therefore, there exists $\hat{u}\in H^1(\R^N,\R)\setminus\{0\}$ such that, passing to a subsequence, as $n\to\infty$,
 \begin{equation}\label{2.11.4}
\hat{u}_n\rightharpoonup \hat{u}~~\text{in}~H^1(\R^N,\R),\quad
\hat{u}_n\to \hat{u}~~\text{in}~L^q_{\text{loc}}(\R^N,\R)~\text{for}~ q\in[1,2^*),\quad
\hat{u}_n\to \hat{u}~~\text{a.e.~on}~\R^N.
\end{equation}
Let $w_n = \hat{u}_n -\hat{u}$. Then $(\ref{2.11.4})$ and Lemma \ref{lm2.11} yield
 \begin{equation}\label{2.11.5}
I_\infty(\hat{u}_n)=I_\infty(\hat{u})+I_\infty(w_n)+o(1),~~J_\infty(\hat{u}_n)=J_\infty(\hat{u})+J_\infty(w_n)+o(1).
 \end{equation}
 Set $$\Psi_{\infty}(u):=I_\infty(u)-\frac{1}{2}J_\infty(u)=\frac{Np-2N+\mu-2}{4p}\int\int\frac{A_\infty^2|u(x)|^p|u(y)|^p}{|x-y|^\mu}dxdy,\quad \forall u\in H^1(\R^N,\R).$$
Then by $(\ref{2.10.ii})$, $(\ref{2.11.3})$ and $(\ref{2.11.5})$, we have
 \begin{equation}\label{2.11.6}
  \aligned
m(c)-\Psi_{\infty}(\hat{u}) +o(1)=\Psi_{\infty}(w_n)
\endaligned
 \end{equation}
and
 \begin{equation}\label{2.11.7}
J_\infty(w_n)=-J_\infty(\hat{u})+o(1).
 \end{equation}

By a standard argument \cite[Lemma 2.15]{MR4081327}, we have
\begin{equation}\label{2.11.8}
I_\infty(\hat{u})=m(c),~~J_\infty(\hat{u})=0.
\end{equation}

By Lemma \ref{lm2.8}, there exists $\tilde{t}> 0$ such that $\hat{u}^{\tilde{t}}\in\mathcal{M}(c)$, moreover, it
follows from $(A_1)$, $(\ref{2.11.8})$, and Lemma \ref{cr2.7} that
$$
m(c) \leq I\left(\hat{u}^{\tilde{t}}\right) \leq I_{\infty}\left( \hat{u}^{\tilde{t}}\right) \leq I_{\infty}(\hat{u})=m(c).
$$
This shows that $m(c)$ is achieved at $\hat{u}^{\tilde{t}}\in\mathcal{M}(c)$.

 Case (ii) $\bar{u}\neq 0$. Let $v_n = u_n - \bar{u}$. Then $v_n\rightharpoonup0$ in $H^1(\R^N,\R)$. By Lemma \ref{lm2.11}, we have
\begin{equation}\label{22.11.5}
I(u_n)=I(\bar{u})+I(v_n)+o(1),~~J(u_n)=J(\bar{u})+J(v_n)+o(1).
 \end{equation}
For $u\in H^1(\R^N,\R)$, set
  \begin{equation}\label{A3}
  \begin{array}{ll}
  \Psi(u):&\displaystyle=I(u)-\frac{1}{2}J(u)\vspace{0.2cm}\\
 &\displaystyle=\frac{1}{4p}\int\int\frac{\left[(Np-2N+\mu-2)A(x)-2\nabla A(x)\cdot x\right]A(y)|u(x)|^p  |u(y)|^p}{|x-y|^\mu}dxdy.
 \end{array}
 \end{equation}
 Then it follows from $(A_1)$ and Lemma 3.1 that $\Psi(u) > 0$ for $u \in H^1(\R^N,\R)\setminus\{0\}$.
Using the same argument in \cite[Lemma 2.15]{MR4081327}, we have
 \begin{equation}\label{22.11.8}
I(\bar{u})=m(c),~~J(\bar{u})=0,~~\|\bar{u}\|_2^2=c.
 \end{equation}
This completes the conclusion.
\end{proof}

\begin{lemma}\label{lm2.16} If $\bar{u}\in\mathcal{M}(c)$ and $I(\bar{u}) = m(c)$, then $\bar{u}$ is a critical point of $I|_{S(c)}$.
\end{lemma}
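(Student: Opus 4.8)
The plan is to show that the Pohozaev-type manifold $\mathcal{M}(c)$ defined in \x{c1.9} is a \emph{natural constraint}, so that a minimizer of $I$ on $\mathcal{M}(c)$ is automatically a critical point of $I$ on the full sphere $\mathcal{S}(c)$. Concretely, $\bar{u}$ minimizes $I$ subject to the two constraints $G_1(u):=\|u\|_2^2-c=0$ and $G_2(u):=J(u)=0$, with $J$ as in \x{J}. Since $A\in C^1$ by $(A_1)$, both $I$ and $J$ are $C^1$, so provided a constraint qualification holds there are Lagrange multipliers $\lambda,\nu\in\R$ with
\begin{equation*}
I'(\bar{u})=\lambda\,\bar{u}+\nu\,J'(\bar{u}).
\end{equation*}
The whole proof reduces to showing $\nu=0$, which yields $I'(\bar{u})=\lambda\bar{u}$, i.e. $\bar u$ is critical for $I|_{\mathcal{S}(c)}$.

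The crucial ingredient is the strict sign of $\left.\frac{d}{dt}J(\bar u^{t})\right|_{t=1}$. Differentiating the identity \x{2.7.1}, namely $J(\bar u^{t})=\|\nabla \bar u\|_2^2-t^2 h(t,\bar u)$, gives
\begin{equation*}
\left.\frac{d}{dt}J(\bar u^{t})\right|_{t=1}=-2\,h(1,\bar u)-\frac{\partial h}{\partial t}(1,\bar u).
\end{equation*}
Because $\bar u\in\mathcal{M}(c)$ forces $J(\bar u)=\|\nabla\bar u\|_2^2-h(1,\bar u)=0$, we have $h(1,\bar u)=\|\nabla\bar u\|_2^2\geq\rho_0^2>0$ by Lemma \ref{lm2.13}(i), while the strict monotonicity of $t\mapsto h(t,\bar u)$ from \x{2.15} gives $\frac{\partial h}{\partial t}(1,\bar u)\geq 0$. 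Hence
\begin{equation*}
\left.\frac{d}{dt}J(\bar u^{t})\right|_{t=1}=-2\|\nabla\bar u\|_2^2-\frac{\partial h}{\partial t}(1,\bar u)<0 .
\end{equation*}
This single computation does double duty: it supplies the constraint qualification and it will force $\nu=0$.

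To finish, I would test the Lagrange identity against the tangent vector of the scaling curve $w:=\left.\frac{d}{dt}\bar u^{t}\right|_{t=1}$ (the differentiability of $t\mapsto \bar u^{t}$ in $H^1$ at $t=1$ is the one mild technical point, handled exactly as in \cite{MR4081327}). Since $\|\bar u^{t}\|_2^2\equiv c$ is constant, $\langle\bar u,w\rangle=\tfrac12\left.\frac{d}{dt}\|\bar u^{t}\|_2^2\right|_{t=1}=0$; since $\left.\frac{d}{dt}I(\bar u^{t})\right|_{t=1}=J(\bar u)=0$ we get $\langle I'(\bar u),w\rangle=0$; and by the chain rule $\langle J'(\bar u),w\rangle=\left.\frac{d}{dt}J(\bar u^{t})\right|_{t=1}<0$. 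In particular $J'(\bar u)$ is not a scalar multiple of $\bar u$, so $dG_1,dG_2$ are independent at $\bar u$ and the multiplier rule is legitimate. Pairing the identity with $w$ then yields $0=\lambda\cdot 0+\nu\langle J'(\bar u),w\rangle$, and since $\langle J'(\bar u),w\rangle\neq 0$ we conclude $\nu=0$, whence $I'(\bar u)=\lambda\bar u$ as claimed. The main obstacle is nothing more than establishing the strict inequality $\left.\frac{d}{dt}J(\bar u^{t})\right|_{t=1}<0$ of the middle paragraph; once that is in hand (and it follows directly from \x{2.7.1}, \x{2.15} and Lemma \ref{lm2.13}), the rest is the standard constrained-minimization bookkeeping, consistent with Lemmas \ref{cr2.7} and \ref{lm2.8} which guarantee $t=1$ is the unique, strict maximum of $t\mapsto I(\bar u^{t})$.
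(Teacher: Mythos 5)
Your route---viewing $\mathcal{M}(c)$ as the intersection of the two constraints $G_1(u)=\|u\|_2^2-c$ and $G_2(u)=J(u)$, invoking the multiplier rule, and killing the second multiplier $\nu$ by pairing with the tangent vector of the scaling curve---is genuinely different from the paper's proof, which runs a quantitative deformation argument following \cite[Lemma 2.16]{MR4081327}. Unfortunately your version has a real gap at its central step: the ``tangent vector'' $w=\left.\frac{d}{dt}\bar u^{t}\right|_{t=1}=\frac{N}{2}\bar u+x\cdot\nabla\bar u$ need not exist as an element of $H^1(\R^N,\R)$, nor even of $L^2(\R^N,\R)$, when $\bar u$ is merely an $H^1$ minimizer: $|x||\nabla\bar u|\in L^2$ is an extra decay property that a general $H^1$ function fails to have, so the curve $t\mapsto\bar u^{t}$ is continuous but not differentiable in $H^1$. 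Consequently the pairings $\langle I'(\bar u),w\rangle$ and $\langle J'(\bar u),w\rangle$ are undefined, the chain-rule identifications $\langle J'(\bar u),w\rangle=\left.\frac{d}{dt}J(\bar u^{t})\right|_{t=1}$ and $\langle I'(\bar u),w\rangle=J(\bar u)=0$ cannot even be stated, and with them collapse both your constraint qualification (linear independence of $G_1'(\bar u)$ and $G_2'(\bar u)$) and your conclusion $\nu=0$. The scalar function $t\mapsto J(\bar u^{t})$ is of course differentiable---it is an explicit function of $t$, as in \x{2.7.1}---but that is not the same as differentiating along the curve in $H^1$, and your argument needs the latter. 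Calling this ``one mild technical point, handled exactly as in \cite{MR4081327}'' mischaracterizes that reference: it (and this paper) adopt a deformation argument precisely to avoid ever differentiating the fiber map in $H^1$.

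The gap is not easily patched inside your framework. The natural repair---accept $I'(\bar u)=\lambda\bar u+\nu J'(\bar u)$, read it as an elliptic equation, gain regularity, and then eliminate $\nu$ via a Poho\v{z}aev identity---founders because $J'$ contributes a nonlocal term whose coefficient is $(Np-2N+\mu)A(x)-2\nabla A(x)\cdot x$; the Poho\v{z}aev identity for that equation requires differentiating this coefficient, i.e.\ second derivatives of $A$, whereas $(A_1)$ only provides $A\in C^1(\R^N,\R)$. The paper's argument sidesteps all of this: assuming $I'|_{\mathcal{S}(c)}(\bar u)\neq 0$, one applies the quantitative deformation lemma on $\mathcal{S}(c)$ to push a neighborhood of $\bar u$ strictly below the level $m(c)$, and then uses the strict fiber maximum from Lemma \ref{cr2.7} together with the sign change of $t\mapsto J(\bar u^{t})$ at $t=1$ (positive for $t<1$, negative for $t>1$; an intermediate-value argument requiring no derivative of the fiber map) to produce a point of $\mathcal{M}(c)$ with energy strictly below $m(c)=\inf_{\mathcal{M}(c)}I$, a contradiction. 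Your observation that the manifold is ``transversal'' to the scaling fibers is the right heuristic, and your use of Lemmas \ref{cr2.7}, \ref{lm2.8} and \ref{lm2.13} is consistent with the paper, but to make the lemma rigorous you should switch to the deformation scheme rather than the Lagrange-multiplier elimination.
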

\begin{proof}	
By a similar deformation argument in \cite[Lemma 2.16]{MR4081327}, we get the conclusion.
\end{proof}

{\it Proof of Theorem \ref{th1.2}.} For any $c > 0$, in view of Lemmas \ref{lm2.15} and \ref{lm2.16}, there exists
$\overline{u}_{c} \in \mathcal{M}(c)$ such that
$
I\left(\overline{u}_{c}\right)=m(c),\left.\quad I\right|_{\mathcal{S}(c)} ^{\prime}\left(\overline{u}_{c}\right)=0.
$
In view of the Lagrange multiplier theorem, there exists $\lambda_c\in\R$ such that
$
I^{\prime}\left(\overline{u}_{c}\right)=\lambda_{c} \overline{u}_{c}.
$
Therefore, $(\overline{u}_{c},\lambda_c)$ is a solution of (\ref{1.1.0}). \qed

\vskip4mm
\bibliographystyle{siam}
\bibliography{ChenYYreference(2)}
\end{document}